\newcommand{\R}{\mathbb{R}}
\newcommand{\N}{\mathbb{N}}
\newcommand{\Z}{\mathbb{Z}}
\newcommand{\Q}{\mathbb{Q}}
\newtheorem{theorem}{Theorem}[section]
\newtheorem{corollary}[theorem]{Corollary}
\newtheorem{fact}[theorem]{Theorem}
\newtheorem{lemma}[theorem]{Lemma}
\newtheorem{obs}[theorem]{Observation}
\newtheorem{defn}[theorem]{Definition}
\newtheorem{remark}[theorem]{Remark}
\newtheorem*{Theorem1}{Theorem \ref{pressure is computable from above}}
\newtheorem*{Theorem2}{Theorem \ref{pressure is computable}}
\newtheorem*{Theorem3}{Theorem \ref{compute time}}
\newtheorem*{Theorem4}{Theorem \ref{compute time2}}
\newtheorem*{Theorem5}{Theorem \ref{ground state energy}}
\newtheorem*{Cor1}{Corollary \ref{computable from above cor ground state}}
\newtheorem*{Cor2}{Corollary \ref{ground state entropy}}
\title{Computability of pressure for subshifts on countable amenable groups}
\date{19 July 2024}
\begin{document}

\maketitle
\begin{center}
\author{C. Evans Hedges and Ronnie Pavlov}
\end{center}

\begin{abstract} 
There are a variety of results in the literature proving forms of computability for topological entropy and pressure on subshifts. In this work, we prove two quite general results, showing that topological pressure is always computable from above given an enumeration for a forbidden list inducing the subshift, and that for strongly irreducible shifts of finite type, topological pressure is computable. Our results apply to subshifts on all finitely generated amenable groups with decidable word problem and generalize several previous results which applied only to $\mathbb{Z}^d$-subshifts.

%We explore computability properties of a few key objects in the thermodynamic formalism of subshifts in the setting of finitely generated, amenable groups $G$ with decidable word problem. In particular, for a strongly irreducible (SI) subshift of finite type (SFT), we prove the pressure map is computable. Additionally, for any subshift for which there exists an enumerated forbidden list, this map is computable from above. 

As corollaries, we obtain some results related to ground state energy and entropy, proving that the map sending $\phi$ to 
$\sup_{\mu \in M_\sigma(X)} \int \phi d\mu$ is computable/computable from above when $P_X(\phi)$ is, and that the map sending $\phi$ to its ground state/residual entropy is computable from above when $P_X(\phi)$ is computable.

We conclude by giving explicit bounds on computation time of $P_X(\phi)$ in the $\Z^d$ setting for SI SFTs and locally constant and rational valued $\phi$, and show that in the special case $X = A^{\mathbb{Z}^2}$, this algorithm runs in singly exponential time.

%We show for a subshift $X$ with computable pressure map, the map that sends a potential $\phi$ to $\sup_{\mu \in M_\sigma(X)} \int \phi d\mu$ is computable. We additionally show the analogous result for when $X$ has a pressure map that is computable from above. As a final corollary, we show that for any subshift for which the pressure map is computable from above, the map sending $\phi$ to the ground state/residual entropy of $\phi$ is computable from above. 

\end{abstract}

% \tableofcontents

\section{Introduction}

In this paper, we will explore computability properties of a few key objects in the thermodynamic formalism in the setting of symbolically defined topological dynamical systems called \textbf{subshifts} on countable amenable groups. A $G$-subshift (for 
a countable amenable group $G$) is defined by a finite alphabet $\mathcal{A}$ and a subset $X \subset \mathcal{A}^G$ which is closed (in the product topology) and invariant under the left-shift action $(\sigma_g)$ defined by $(\sigma_g x)(h) = x(hg)$ for all $x \in X$ and $g, h \in G$. When $G$ is clear from context, we will refer to a $G$-subshift as just a subshift.

 Given a subshift $X$ and any continuous map $\phi$ (called a \textbf{potential}) from $X$ to $\mathbb{R}$, one can define sums over finite patterns appearing in $X$, weighted according to $\phi$, called \textbf{partition functions}; the exponential growth rate of these partition functions (as a function of volume of the shape of the finite patterns) is called \textbf{topological pressure}. Topological pressure has a long history in statistical physics, related to concepts such as phase transitions, and is also a generalization of the long-studied concept of topological entropy in dynamical systems.

%the \textbf{topological pressure of $\phi$ on $X$} is a real number $P_X(\phi)$ \textcolor{red}{given by the exponential growth rate (as a function of volume) of a sum, weighted according to $\phi$, over finite patterns appearing in $X$} \textcolor{blue}{I'm not sure I love this phrasing, but couldn't come up with anything better in the meantime}. 

\subsection{Pressure for $\mathbb{Z}$-subshifts}

The most classical case is when the underlying group is $\mathbb{Z}$, meaning that the subshift $X$ consists of bi-infinite sequences from the alphabet. One of the earliest well-studied settings is when a $\mathbb{Z}$-subshift $X$ and potential $\phi$ are defined by local rules, meaning $X$ is defined via a finite list of forbidden words (such $X$ are called {\bf subshifts of finite type} or \textbf{SFTs}) and $\phi$ depends on only finitely many letters of its input sequence (such $\phi$ are called \textbf{locally constant}). In this case, the classical Perron-Frobenius theorem implies that the pressure is the logarithm of an algebraic number derivable directly from $X$ and $\phi$. 

More generally, there is no hope for such a closed form. In fact, if one fixes $\phi = 0$ (in which case the pressure is the \textbf{topological entropy}), then for every real $\beta > 0$ there is a $\mathbb{Z}$-subshift $X$ called a $\beta$-shift with pressure/entropy equal to $\beta$. Similar, if one fixes $X$ to be the full shift $\mathcal{A}^\mathbb{Z}$ over an alphabet $\mathcal{A}$, then pressure over the class of constant potentials $\phi = C$ is continuous in $C$, and so can achieve every possible real. Therefore, one needs to look for weaker conclusions, and it turns out that for more general groups, a natural notion is that of computability of the pressure map. 

Formal definitions are in Section~\ref{prelim}, but roughly speaking, a function $f$ is \textbf{computable} if there is a Turing machine which, on input $x$ and $n$, can output a rational approximation to within $2^{-n}$ of $f(x)$. It's important to note that general $\phi$ and $X$ (when not locally constant/an SFT respectively) cannot be expressed via a finite amount of information, and so when one discusses computability of pressure, it's usually understood that one has access to suitable representations or \textbf{oracles} for $\phi, X$ which can be read by an algorithm. Generally, $X$ is described via an enumeration of finite words which appear (called the \textbf{language}) and $\phi$ is described via a sequence of locally constant 
rational-valued approximations which converge to it uniformly. 

Some early results about computability of pressure were proved by Spandl, such as the following. (We do not define sofic shifts in this work, but they are a class containing SFTs.)

\begin{theorem}[\cite{sft-pressure-comp}, \cite{sofic-pressure-comp}]
If $X$ is a $\mathbb{Z}$-SFT or sofic shift, then the function $P_X$ is computable.
\end{theorem}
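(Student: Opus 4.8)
We sketch the argument; the plan is to reduce to locally constant rational potentials and then to evaluate the pressure exactly via a transfer matrix. \emph{Step 1 (reduction to locally constant potentials).} Write $Z_n(\phi)=\sum_w\sup_{x\in[w]}e^{S_n\phi(x)}$ for the partition function over the length-$n$ words $w$ appearing in $X$, where $S_n\phi=\sum_{i=0}^{n-1}\phi\circ\sigma^i$, so that $P_X(\phi)=\lim_n\frac1n\log Z_n(\phi)$ (any standard variant of $Z_n$ serves equally). Since $|S_n\phi-S_n\psi|\le n\|\phi-\psi\|_\infty$, each summand is distorted by a factor at most $e^{n\|\phi-\psi\|_\infty}$, so $|P_X(\phi)-P_X(\psi)|\le\|\phi-\psi\|_\infty$; that is, $P_X$ is $1$-Lipschitz in the sup norm. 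The oracle for $\phi$ supplies locally constant rational $\phi_k$ with $\|\phi-\phi_k\|_\infty<2^{-k}$, so from a routine that, uniformly in its inputs, returns a rational within $2^{-m}$ of $P_X(\psi)$ for locally constant rational $\psi$, one obtains the required algorithm for $P_X(\phi)$: on input $n$, query $\phi_{n+1}$ and return the value computed for $(\psi,m)=(\phi_{n+1},n+1)$, incurring total error at most $2^{-(n+1)}+2^{-(n+1)}=2^{-n}$. Here we use that for $\mathbb Z$ the relevant combinatorics --- membership in the language, emptiness of an SFT, higher-block recoding, and construction of a right-resolving presentation of a sofic shift --- is decidable.

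\emph{Step 2 (locally constant potentials via Perron--Frobenius).} Passing to a higher-block presentation, we may assume $X$ is a vertex shift on a finite directed graph and $\psi$ depends only on the current symbol. Let $A$ be the nonnegative matrix obtained by putting $e^{\psi(\cdot)}$ on each edge $i\to j$ of the graph and $0$ where there is no edge. Then $Z_n(\psi)$ and $\mathbf{1}^{T}A^{n}\mathbf{1}$ have the same exponential growth rate --- they differ only by polynomial-in-$n$ multiplicities and boundary factors of size $e^{O(\|\psi\|_\infty)}$ --- so $P_X(\psi)=\log\rho(A)$, the log of the spectral radius of $A$, by Perron--Frobenius applied to the strongly connected components of the graph. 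As $\exp$ is a computable real function, the entries of $A$ are uniformly computable reals; rationally approximating each within relative error $2^{-m-2}$ produces a rational matrix $B$ with $|\log\rho(B)-\log\rho(A)|\le 2^{-m-2}$, using that the spectral radius is monotone and positively homogeneous on nonnegative matrices. For rational $B$, the Perron value $\rho(B)$ is the largest-modulus real root of $\chi_B\in\Q[t]$ --- which can be isolated and refined to any accuracy by standard real-root routines --- and $\log$ of a positive computable real is computable. This approximates $P_X(\psi)$ to within $2^{-m}$ uniformly, completing the SFT case.

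\emph{Step 3 (sofic shifts).} A sofic shift comes with a finite right-resolving labeled graph $\mathcal G$ whose language is the set of label words of its paths; after a higher-block recoding, a locally constant $\psi$ becomes a function of a single labeled edge, and one forms the transfer matrix $A$ on the states of $\mathcal G$ exactly as before. The one new point is that a word of length $n$ may be read along several paths of $\mathcal G$ --- but between $1$ and $|V(\mathcal G)|$ of them, by right-resolvingness --- so $Z_n^X(\psi)\le\mathbf{1}^{T}A^n\mathbf{1}\le|V(\mathcal G)|\,Z_n^X(\psi)$, whence again $P_X(\psi)=\log\rho(A)$ and Steps 1--2 apply.

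\emph{Main obstacle, and a softer alternative.} The classical ingredients --- existence of the presentations and $P_X(\phi_k)\to P_X(\phi)$ --- are routine; the work is entirely in making things \emph{effective}, and this bites in exactly two places: the explicit Lipschitz modulus of Step 1, which converts the convergence into a stopping rule, and the quantitative control of $\rho(A)$ in Step 2 for a matrix with transcendental but computable entries. Since the transfer matrix need not be primitive, one cannot merely iterate it and must route the estimate through $\chi_B$. Alternatively --- and this is essentially the mechanism the present paper upgrades to amenable groups --- one can avoid eigenvalues entirely: on a strongly connected component whose words can be joined across gaps of bounded length $p$ one has $\frac{1}{m+p}\big(\log Z_m(\psi)-p\|\psi\|_\infty\big)\le P_X(\psi)\le\frac1m\log Z_m(\psi)$, the upper bound from submultiplicativity of $(Z_m)_m$ and the lower bound from concatenating $m$-words across such gaps; taking the maximum over the finitely many components and letting $m\to\infty$ then yields two computable sequences converging at rate $O(1/m)$.
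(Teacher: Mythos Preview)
The paper does not supply a proof of this statement; it is quoted from \cite{sft-pressure-comp}, \cite{sofic-pressure-comp} as background in the introduction. Your sketch is correct and is essentially the classical transfer-matrix argument behind those references. A few small remarks: in Step~2 the bound on $\log\rho$ coming from relative entrywise error $\delta$ is $|\log\rho(B)-\log\rho(A)|\le-\log(1-\delta)$, which is slightly larger than $\delta$, so the constant needs a minor adjustment; and $\rho(B)$ is the \emph{largest} real root of $\chi_B$ rather than the ``largest-modulus'' one, since $-\rho(B)$ can also be an eigenvalue. In Step~3 it is worth making explicit that after recoding, $\psi$ depends only on the \emph{label} and not on the underlying edge of $\mathcal G$, so that all paths reading a fixed word carry the same weight --- this is what turns the multiplicity bound $1\le N(w)\le|V(\mathcal G)|$ into the sandwich on $Z_n$.

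Your ``softer alternative'' is exactly the mechanism the present paper abstracts: the upper bound is the infimum rule $P_X(\varphi)=\inf_F|F|^{-1}\log Z_F(\varphi)$, and the lower bound comes from concatenating words across bounded gaps, which for general amenable $G$ becomes the strong-irreducibility plus quasi-tiling argument of Section~3. Note, however, that the cited theorem is not a special case of the paper's Theorem~\ref{pressure is computable}: it covers \emph{all} $\mathbb Z$-SFTs and sofic shifts, whereas the paper's result needs strong irreducibility (which fails already for periodic irreducible $\mathbb Z$-SFTs) and does not treat the sofic case. So your Perron--Frobenius route is genuinely doing something the paper's methods do not, and it is the right tool for this particular statement.
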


More recently, in \cite{pressure-comp-beyond-sft}, Burr, Das, Wolf, and Yang proved computability of $P_X$ for a large class of so-called coded subshifts, including many well-studied examples such as $S$-gap shifts and $\beta$-shifts. The authors additionally showed that pressure is actually not computable if one views it as a function of both $X$ and $\phi$ (rather than fixing $X$ and then taking $\phi$ as the variable). We do not yet define equilibrium states and entropy, but note that the hypotheses of the theorem are satisfied, for instance, by any mixing $\mathbb{Z}$-SFT and H\"{o}lder potential $\phi$. 

\begin{theorem}[Theorem B, \cite{pressure-comp-beyond-sft}] If $X$ is a $\mathbb{Z}$-subshift with alphabet $\mathcal{A}$, $\phi \in C(X)$, and all equilibrium states for $\phi$ on $X$ have positive entropy, then the generalized pressure function 
$P : (\Sigma \times C(\mathcal{A}^\Z)) \rightarrow \R$ (where $\Sigma$ represents the space of all $\mathbb{Z}$-subshift with alphabet $\mathcal{A}$) defined by $(X, \phi) \mapsto P_{X}(\phi)$
is not computable at $(X, \phi)$. 
\end{theorem}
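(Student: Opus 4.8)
The plan is to show that $P$ fails to be \emph{continuous} at $(X,\phi)$, with respect to the topology induced by the representation of $\Sigma$ (the one in which $X_n \to X$ exactly when, for every $\ell$, $\mathcal{L}_\ell(X_n) = \mathcal{L}_\ell(X)$ for all sufficiently large $n$), and then invoke the standard fact of computable analysis that a function computable at a point is continuous at that point. Since $\phi$ may be held fixed throughout, it suffices to exhibit a sequence of subshifts $X_k \to X$ with $\limsup_k P_{X_k}(\phi) < P_X(\phi)$.

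The first step is to extract a genuine gap from the hypothesis. Set $S := \sup_{\nu \in M_\sigma(X)} \int \phi\, d\nu$; the supremum is attained, say at $\nu^*$, because $\nu \mapsto \int \phi\,d\nu$ is continuous on the compact space $M_\sigma(X)$. By upper semicontinuity of the entropy map on subshifts, $\nu \mapsto h(\nu) + \int\phi\,d\nu$ also attains its maximum, so an equilibrium state $\mu$ exists, with $h(\mu) + \int\phi\,d\mu = P_X(\phi)$. Suppose, for contradiction, that $P_X(\phi) = S$. Evaluating the variational inequality at the interpolants $\rho_t := t\nu^* + (1-t)\mu$ and using the affinity of $\nu\mapsto h(\nu)$ and of $\nu\mapsto\int\phi\,d\nu$ on $M_\sigma(X)$ gives $t\,h(\nu^*) + S = h(\rho_t) + \int\phi\,d\rho_t \le P_X(\phi) = S$ for every $t\in(0,1]$, hence $h(\nu^*)=0$; but then $h(\nu^*)+\int\phi\,d\nu^* = S = P_X(\phi)$, so $\nu^*$ is an equilibrium state of zero entropy, contradicting the hypothesis. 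Therefore $\delta := P_X(\phi) - S > 0$.

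The second step is the construction of the approximating sequence. For each $k$, let $X^{(k)}$ be the $k$-step SFT approximation of $X$ (the SFT forbidding exactly the minimal forbidden words of $X$ of length at most $k$), so that $X \subseteq X^{(k)}$, the $X^{(k)}$ decrease to $X$, and $\mathcal{L}_{\le k}(X^{(k)}) = \mathcal{L}_{\le k}(X)$. I will produce a subshift $X_k \subseteq X^{(k)}$ with $\mathcal{L}_{\le k}(X_k) = \mathcal{L}_{\le k}(X)$ and with zero topological entropy. Then $X_k \to X$, and, since all invariant measures of $X_k$ have zero entropy, the variational principle gives $P_{X_k}(\phi) = \sup_{\nu\in M_\sigma(X_k)}\int\phi\,d\nu \le \sup_{\nu\in M_\sigma(X^{(k)})}\int\phi\,d\nu$, and the latter converges to $S$ by weak-$*$ compactness (using $\bigcap_k M_\sigma(X^{(k)}) = M_\sigma(X)$). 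Hence $\limsup_k P_{X_k}(\phi) \le S = P_X(\phi)-\delta < P_X(\phi)$, which is the desired discontinuity. To build $X_k$, realize each $w\in\mathcal{L}_k(X)$ by a ``simple'' point of $X^{(k)}$: if $w$ lies on a cycle of the transition graph of $X^{(k)}$, use a periodic point of $X^{(k)}$ through $w$ — such cycles exist even when $X$ has no periodic points, by a pigeonhole argument, since in any point of $X$ through $w$ some length-$k$ window recurs on both sides of $w$ and the block spanning two consecutive recurrences defines a periodic point of $X^{(k)}$ containing $w$; for the remaining ``transient'' words $w$, splice together a periodic point of $X^{(k)}$ matching, at scale $k$, the $\alpha$-limit behavior of a point of $X$ through $w$, a verbatim copy of that point across an $O(k)$-window around $w$, and a periodic point matching the $\omega$-limit behavior — an eventually periodic point, whose orbit closure has zero entropy. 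As $\mathcal{L}_k(X)$ is finite, $X_k$ is a finite union of such zero-entropy orbit closures and hence still has zero entropy, while $\mathcal{L}_{\le k}(X_k)=\mathcal{L}_{\le k}(X)$ by construction.

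The main obstacle is the second step in full generality. When $X$ is a mixing SFT and $\phi$ is Hölder or locally constant (the motivating case), the transition graph of $X$ is strongly connected, every word lies on a cycle, and a single de Bruijn word of order $k$ already realizes all of $\mathcal{L}_{\le k}(X)$ with zero entropy, so the construction is immediate; the theorem then says, in effect, that although $P_X(\phi)$ may be a perfectly computable real, $P$ still cannot be computed at $(X,\phi)$ because its value can be undercut by dynamically trivial subshifts indistinguishable from $X$ at any finite scale. For reducible or non-SFT $X$ the work lies in handling the transient part of the language — which by the hypothesis carries no pressure, since every invariant measure of $X$ is supported on the recurrent part — so that $X_k$ realizes it without introducing any exponential growth; verifying that the spliced points genuinely lie in $X^{(k)}$ and that their orbit closures are exactly the claimed eventually periodic sets is the delicate part.
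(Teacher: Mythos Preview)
The paper does not itself prove this statement; it is quoted from \cite{pressure-comp-beyond-sft}, and the only remark the paper makes about its proof is the sentence ``The proof of lack of computability in Theorem B from \cite{pressure-comp-beyond-sft} relies on the somewhat surprising fact that pressure does not vary continuously as a function of $X$, even for $\phi = 0$.'' Your proposal is exactly an implementation of that strategy --- exhibit $X_k \to X$ in the language topology with $\limsup_k P_{X_k}(\phi) < P_X(\phi)$ --- and the argument you give is essentially sound.

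Two small comments. First, your interpolation argument for $\delta>0$ is more work than necessary: if $P_X(\phi)=S$ then $h(\nu^*)+S \le P_X(\phi)=S$ already forces $h(\nu^*)=0$, and $h(\nu^*)+\int\phi\,d\nu^* = S = P_X(\phi)$ makes $\nu^*$ a zero-entropy equilibrium state. Second, for the splicing in the transient case, the clean way to guarantee the spliced point lies in $X^{(k)}$ is to overlap on a full $k$-block: choose $N_-$ with $v_- := x|_{[N_-,N_-+k-1]}$ recurrent in the transition graph (such $N_-$ exists since some $k$-block must repeat along the backward orbit, and any repeated block lies on a cycle), take a periodic $p_-\in X^{(k)}$ with $p_-|_{[N_-,N_-+k-1]}=v_-$, and set $y = p_-$ on $(-\infty,N_-+k-1]$ and $y=x$ on $[N_-,\infty)$; then every $k$-block of $y$ is a $k$-block of either $p_-$ or $x$, hence in $\mathcal{L}_k(X)$. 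The right splice is symmetric. With that made explicit, the construction of the zero-entropy $X_k$ with $\mathcal{L}_{\le k}(X_k)=\mathcal{L}_{\le k}(X)$ goes through, and the bound $\limsup_k P_{X_k}(\phi)\le S$ follows from $M_\sigma(X_k)\subseteq M_\sigma(X^{(k)})$ together with weak-$*$ compactness and $\bigcap_k X^{(k)}=X$, as you indicate.
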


The proof of lack of computability in Theorem B from \cite{pressure-comp-beyond-sft} relies on the somewhat surprising fact that pressure does not vary continuously as a function of $X$, even for $\phi = 0$. Therefore, for computability results moving forward, we generally assume that $X$ is a fixed subshift and that $P_X(\phi)$ is viewed only as a function of $\phi$. 

\subsection{Pressure for subshifts on more general acting groups}

The most natural class of groups to work with for results about presure are the countable amenable groups, since those are the ones on which the most meaningful theory of entropy/pressure has been defined. We postpone a formal definition to Section~\ref{prelim}, but informally, a countable group is amenable if it has a so-called F\o lner sequence, which is a sequence of finite subsets whose `boundaries have volume growing much more slowly than the sets themselves.' For our purposes, we also assume that $G$ is finitely generated and has decidable word problem, so that elements of $G$ have a representation suitable for computation.

One of the first surprising discoveries for more general groups is that the class of SFTs, which had such simple behavior for 
$\mathbb{Z}$, has some surprising pathologies for more general groups. A first discovery in this area (\cite{berger}) is that even for $\mathbb{Z}^2$, there is no algorithm which, given an alphabet and finite list of forbidden patterns, decides whether the associated SFT is nonempty (though there is a semi-algorithm). Later, in foundational work of Hochman and Meyerovitch, they proved that there are exist $\mathbb{Z}^2$-SFTs whose topological entropy (pressure for $\phi = 0$) is not computable. In fact, they proved that the class of entropies attained by SFTs is exactly those nonnegative reals which are \textbf{computable from above}, meaning that there is a Turing machine which outputs approximations from above, but without any knowledge of rate of convergence. (In particular, computability from above is far weaker than computability.)

\begin{theorem}[\cite{hoch-meyer}]
A nonnegative real $\alpha$ is the entropy of some $\mathbb{Z}^2$-SFT if and only if it is computable from above.
\end{theorem}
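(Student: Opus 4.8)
The plan is to prove the two implications separately. The forward direction --- that the entropy of a $\Z^2$-SFT is always computable from above --- is a soft argument combining compactness with subadditivity, whereas the reverse direction is an explicit SFT construction and is where essentially all the difficulty lies.

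For the forward direction, let $X\subseteq\mathcal A^{\Z^2}$ be a nonempty SFT whose forbidden patterns have support in $[1,k]^2$. For a box $R=[1,a]\times[1,b]$ and an integer $m$, let $G_R^{(m)}$ be the set of patterns on $R$ that extend to a pattern containing no forbidden translate on the box of radius $m$ about $R$; this is a finite, brute-force-computable set, and it shrinks as $m$ grows. A standard compactness argument shows that $\bigcap_m G_R^{(m)}$ is exactly the set $G_R$ of globally admissible patterns on $R$ (a pattern extends to a point of $X$ if and only if it extends to arbitrarily large locally admissible boxes around it). Since $\log|G_{[1,a]\times[1,b]}|$ is subadditive in each of $a$ and $b$, the multidimensional Fekete lemma gives
\[
h(X)=\inf_{a,b}\frac{1}{ab}\log\bigl|G_{[1,a]\times[1,b]}\bigr|=\inf_{a,b,m}\frac{1}{ab}\log\bigl|G_{[1,a]\times[1,b]}^{(m)}\bigr|.
\]
Enumerating all triples $(a,b,m)$ and taking the running minimum of the computable rationals $\frac1{ab}\log|G_{[1,a]\times[1,b]}^{(m)}|$ produces a nonincreasing sequence converging to $h(X)$, so $h(X)$ is computable from above.

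For the reverse direction, fix $\alpha\ge 0$ together with a Turing machine outputting rationals $a_1\ge a_2\ge\cdots$ with $a_k\downarrow\alpha$, and build an SFT $X$ with $h(X)=\alpha$. I would use three ingredients. First, a self-simulating hierarchical tileset, in the spirit of Robinson, Mozes, and Durand--Romashchenko--Shen, forcing every configuration to carry a nested system of square ``$n$-blocks'' of rapidly growing side lengths $N_n$, each containing a reserved computational strip; this skeleton has zero entropy by itself but supplies room for computation at every scale. Second, inside the strip of each $n$-block, run the given machine long enough --- choosing the scale function so that $N_n$ dwarfs the required running time --- to produce an approximation $a_{k(n)}$ of $\alpha$. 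Third, designate in each $n$-block a set of genuinely unconstrained binary cells whose count is dictated by the computed value, arranged so that the per-site number of free cells visible in an $n$-block tends to $\alpha$ as $n\to\infty$. The monotonicity of $(a_k)$ is essential here: the entropy of an SFT can only be revised downward as one passes to larger patterns, so the free cells reserved at each scale must be governed by an over-estimate of $\alpha$, with the surplus surrendered at finer scales --- precisely what a sequence of approximations from above permits and a sequence from below would not.

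The main obstacle is the bookkeeping that makes the third ingredient deliver entropy \emph{equal} to $\alpha$ rather than merely an upper or lower bound. One must check that the free cells carry no constraints at all; that they collide neither with the hierarchical markers, nor with the special sub-blocks that propagate the hierarchy, nor with the computational strips; and --- the crux --- that the total area of all this overhead is a vanishing fraction of $N_n^2$, so that both the liminf and the limsup of $\frac{1}{n^2}\log(\#\text{patterns on }[1,n]^2)$ equal $\alpha$. Engineering the self-simulating tileset to encode the scale parameter correctly and to transmit each block's computed value reliably to its free-cell region is the delicate technical heart of the argument; once the geometry is fixed, the entropy estimate itself is routine. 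The same construction should work verbatim for $\Z^d$ with $d\ge 2$, since nothing uses $d=2$ beyond the existence of a self-simulating SFT.
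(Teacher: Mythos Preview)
The paper does not prove this theorem; it is quoted from \cite{hoch-meyer} as background in the introduction, with no argument given. So there is no ``paper's own proof'' to compare against directly.

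That said, the forward direction of your proposal is essentially the $\phi=0$, $G=\Z^2$ specialization of what the paper \emph{does} prove as Theorem~\ref{pressure is computable from above}. Your argument via subadditivity of $\log|G_R|$ and the compactness characterization $G_R=\bigcap_m G_R^{(m)}$ is correct and is the classical route. The paper's proof is organized a bit differently: rather than appealing to Fekete, it invokes the infimum rule $P_X(\phi)=\inf_{F}\frac{1}{|F|}\log Z_F(\phi)$ directly, and in place of your $G_R^{(m)}$ it uses the sets $E_{t,n}^{(F)}$ of patterns that are locally admissible (for the first $n$ forbidden words) and extend to a locally admissible pattern on $FG_t$. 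These are really the same idea packaged for general amenable groups; your version is slightly more concrete for $\Z^2$, theirs is phrased to work without rectangles.

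For the reverse direction your sketch is an accurate high-level summary of the Hochman--Meyerovitch construction (hierarchical substitution/Robinson-type layer forcing computation zones at all scales, Turing simulation in those zones, and a controlled density of free bits dictated by the computed upper approximations). You correctly isolate the reason upper-semicomputability is the right hypothesis: free-bit density can only be corrected downward as scale increases. This is a plan, not a proof---the engineering of the self-similar layer and the synchronization between the computed value and the free-bit region is substantial---but since the present paper does not attempt this direction at all, there is nothing here to contrast it with.
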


In fact, in \cite{amenable-entropy}, Barbieri extended this result to a much larger class of groups.

\begin{theorem}[\cite{amenable-entropy}]
If $G$ admits a translation-like action by $\mathbb{Z}^2$, then a nonnegative real $\alpha$ is the entropy of some 
$G$-SFT if and only if it is computable from above.
\end{theorem}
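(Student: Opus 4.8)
The plan is to prove the two implications separately; only the "if" direction uses the translation-like action in an essential way, and it is by far the harder one. For the \emph{only if} direction, let $X \subseteq \mathcal{A}^G$ be a $G$-SFT defined by a finite forbidden list $\mathcal{F}$, and fix a Følner sequence $(F_n)$ for $G$. For finite $F \subseteq G$ let $N(F)$ be the number of patterns in $\mathcal{A}^F$ containing no translate of an element of $\mathcal{F}$; this is effectively computable, since one enumerates the finitely many $p \in \mathcal{A}^F$ and checks each against $\mathcal{F}$, using decidability of the word problem to carry out the translations inside $F$. By (a version of) the Ornstein--Weiss lemma applied to the subadditive data $F \mapsto \log N(F)$, one has $h(X) = \lim_n \frac{1}{|F_n|}\log N(F_n) = \inf_n \frac{1}{|F_n|}\log N(F_n)$ (using that for an SFT the locally admissible counts already compute the entropy). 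Since a finitely generated amenable group with decidable word problem admits a computable Følner sequence, the algorithm can, at stage $n$, output a rational that lies above $\min_{k \le n} \frac{1}{|F_k|}\log N(F_k)$ by at most $2^{-n}$; after taking running minima this is a nonincreasing sequence of rationals converging to $h(X)$, so $h(X)$ is computable from above.

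For the \emph{if} direction, start from the Hochman--Meyerovitch theorem, which realizes any nonnegative real $\alpha$ computable from above as $h_{\Z^2}(Y)$ for some $\Z^2$-SFT $Y$. The plan is to transport $Y$ along a translation-like action $\ast$ of $\Z^2$ on $G$ --- a free action of $\Z^2$ on the set $G$ with $\sup_{g \in G} d_G(g, v \ast g) < \infty$ for each $v \in \Z^2$ --- to obtain a $G$-SFT $X$ with $h_G(X) = \alpha$. Concretely, I would take the alphabet of $X$ to be a product $\mathcal{B} \times \mathcal{A}_Y$: the $\mathcal{A}_Y$-coordinate records a symbol of $Y$, and the $\mathcal{B}$-coordinate records a bounded amount of "pointer data" encoding, locally, which nearby elements of $G$ are the $\Z^2$-neighbors of a given $g$ (possible because each $v \ast g$ lies in a fixed-radius ball around $g$). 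Impose two finite families of forbidden patterns: consistency rules forcing the pointer data to describe a genuine free $\Z^2$-action, and rules forcing, for each generator $v$ of $\Z^2$, the $\mathcal{A}_Y$-symbols at $g$ and $v \ast g$ to be $Y$-compatible. The core claim is that the legal configurations of $X$ are, up to the choice of pointer data, exactly the configurations assigning a point of $Y$ to each $\Z^2$-orbit of $\ast$, read along the orbit.

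Granting that, the entropy computation proceeds as follows: for a Følner set $F \subseteq G$, a legal $X$-pattern on $F$ is determined by its pointer data (which, being locally constrained, contributes zero exponential growth) together with a legal $Y$-pattern on each orbit-trace $F \cap \mathcal{O}$; since $\sum_{\mathcal{O}} |F \cap \mathcal{O}| = |F|$ and the traces $F_n \cap \mathcal{O}$ are asymptotically Følner in $\Z^2$ with controlled boundary corrections, an amalgamation/Følner argument yields $\frac{1}{|F_n|}\log N_X(F_n) \to h_{\Z^2}(Y) = \alpha$, and $h_G(X) \ge 0$ is automatic.

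The main obstacle is entirely in this second direction: making rigorous the SFT encoding of the translation-like action and the claim that its legal configurations see precisely the orbit-wise points of $Y$ (the pointer-consistency rules must genuinely pin the action down), verifying that the pointer coordinates add no entropy, and upgrading the heuristic "$\sum |F \cap \mathcal{O}| = |F|$" to an exact entropy equality in the presence of $\Z^2$-boundary effects accumulated across many orbits. This is exactly the "simulation" machinery in the area: I would organize it around the structure theory of translation-like actions (following Seward, and Barbieri--Sablik-type subaction arguments), handling the zero-entropy bookkeeping by a local-determinism estimate and the Følner comparison via the Ornstein--Weiss lemma together with the uniform displacement bound.
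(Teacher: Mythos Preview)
This theorem is not proved in the paper; it is quoted from \cite{amenable-entropy} as background, so there is no in-paper proof to compare your attempt against. Your outline is, however, broadly the strategy of Barbieri's actual argument, so a few comments are in order.

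For the \emph{only if} direction your sketch is essentially correct, and in fact the present paper proves a stronger statement as Theorem~\ref{pressure is computable from above} (take $\phi=0$; a finite forbidden list is trivially enumerable). One caveat: the assertion that for an SFT the locally admissible counts already compute $h(X)$, i.e.\ $h(X)=\inf_n |F_n|^{-1}\log N(F_n)$, is true but not immediate and deserves its own argument; the paper's route via extendable patterns (Lemma~\ref{modified partition lemma}) sidesteps this and works uniformly for arbitrary subshifts. Note also that the theorem as stated carries the standing hypotheses of \cite{amenable-entropy} (amenability, finite generation, decidable word problem); your argument uses all of them.

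For the \emph{if} direction, transporting a Hochman--Meyerovitch $\Z^2$-SFT along the translation-like action is indeed the approach. The point that deserves more care than you give it is the claim that the pointer layer ``contributes zero exponential growth.'' Because SFT rules are shift-invariant while the translation-like action is not $G$-equivariant, local rules cannot pin down the \emph{given} action; they can only force the pointer layer to encode \emph{some} bounded-displacement $\Z^2$-action on $G$. The space of all such encodings is itself a $G$-subshift, and its entropy is not obviously zero. Obtaining $h_G(X)=h_{\Z^2}(Y)$ on the nose therefore requires either proving that this pointer subshift has zero entropy, or arranging the construction so that the upper bound $h_G(X)\le h_{\Z^2}(Y)$ holds uniformly over all admissible pointer data while the lower bound is witnessed by the given action. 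This is where the substantive work in \cite{amenable-entropy} lies, and it is consistent with what you correctly flag as the main obstacle.
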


Since even $\phi = 0$ can yield noncomputable entropy for $\mathbb{Z}^2$-SFTs, for general groups, it seems the best one can hope for is that the pressure function is computable from above (see Section~\ref{prelim} for a definition). Our first main result shows that for finitely generated amenable $G$ with decidable word problem, given an enumeration for a forbidden list of $X$, pressure is always computable from above.

\begin{Theorem1} Let $G$ be a finitely generated, amenable group with decidable word problem and let $X \subset \mathcal{A}^G$ be any subshift. Then $P_X : C(\mathcal{A}^G) \rightarrow \R$ is computable from above given an enumeration for a forbidden list for $X$. 
\end{Theorem1}

In order to hope for true computability, more assumptions on $X$ must be made. One often useful type of hypothesis are so-called mixing properties, meaning that any two finite legal patterns can co-exist in a point of $X$ given large enough separation. Even these properties are often not enough to achieve computability even for $\phi = 0$, i.e. topological entropy (see \cite{gangloff-menibus}). However, Hochman and Meyerovitch (\cite{hoch-meyer}) showed that the strongest such studied property, \textbf{strong irreducibility (or SI)}, where the required separation between patterns is independent of the patterns chosen, has such implications for entropy.

\begin{theorem}[\cite{hoch-meyer}]
For any $d \geq 2$ and any strongly irreducible $\mathbb{Z}^d$-SFT $X$, the entropy of $X$ is computable.
\end{theorem}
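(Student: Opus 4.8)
The plan is to trap $h(X)$ between two sequences of computable numbers that provably close up, where the upper estimate is the general ``entropy is computable from above'' phenomenon and the lower estimate is exactly where strong irreducibility enters, quantitatively. After recoding I may assume $X$ is a nearest-neighbor SFT. For $n\ge 1$ write $B_n=\{1,\dots,n\}^d$ and let $N_n$ be the number of patterns on $B_n$ that occur in some point of $X$ (the globally admissible $B_n$-patterns), so that by definition of topological entropy $h(X)=\lim_{n\to\infty}\frac1{n^d}\log N_n$. Since a point of $X$ restricts, on the partition of $B_{kn}$ into $k^d$ translates of $B_n$, to a tuple of globally admissible $B_n$-patterns and is determined by that tuple, we get $N_{kn}\le N_n^{k^d}$; a standard Fekete-type argument on $\mathbb{Z}^d$ then upgrades this to $h(X)=\inf_n\frac1{n^d}\log N_n$, so in particular $h(X)\le\frac1{n^d}\log N_n$ for every $n$.

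For the matching lower bound I would invoke strong irreducibility once. Let $g$ be an SI constant for $X$. Place $k^d$ translates of $B_n$ inside $B_{k(n+g)}$ on a grid of spacing $n+g$, so that they are pairwise at distance at least $g$; assigning an arbitrary globally admissible $B_n$-pattern to each, strong irreducibility --- in its ``finitely many patterns at once'' form, which follows from the two-pattern version by an easy induction --- yields a point of $X$ realizing all of them simultaneously, and distinct assignments give distinct $B_{k(n+g)}$-patterns. Hence $N_{k(n+g)}\ge N_n^{k^d}$, and sending $k\to\infty$ gives $h(X)\ge\frac1{(n+g)^d}\log N_n$. Combining,
\[
\frac{\log N_n}{(n+g)^d}\ \le\ h(X)\ \le\ \frac{\log N_n}{n^d},
\]
and since $0\le\frac1{n^d}\log N_n\le\log|\mathcal{A}|$ the length of this interval is at most $\log|\mathcal{A}|\bigl(1-(n/(n+g))^d\bigr)$, an explicit null sequence. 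So to approximate $h(X)$ to within $2^{-j}$ it is enough to pick $n$ making the interval shorter than $2^{-j}$ and then compute $N_n$ exactly.

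Everything therefore reduces to computing $N_n$, i.e.\ to deciding which box patterns lie in the language of $X$, and this is the step I expect to be the main obstacle: for general $\mathbb{Z}^d$-SFTs the language is undecidable (it encodes the domino problem), so strong irreducibility must be used a second time. Non-membership in the language is always semi-decidable --- search for a finite box to which the given pattern fails to extend without creating a forbidden pattern --- so it suffices to semi-decide membership, and here the plan is to use strong irreducibility to convert ``occurs in some (infinite) point of $X$'' into a finite search. The clean target is to prove that a $B_n$-pattern is globally admissible precisely when it extends to a forbidden-pattern-free pattern on the concentric box enlarged by an amount depending only on $g$; the trivial direction is restriction, and the substantive one should be a layer-by-layer extension that repeatedly glues on a fresh globally admissible annulus via strong irreducibility. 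Granting this, $N_n$ is outright computable. Two side issues remain: one detects $X=\emptyset$ by a parallel search for a finite obstruction (in which case $h(X)$ is $0$ or $-\infty$ by convention), and one treats $g$ (or any upper bound on it) as part of the input data describing the SI SFT, its only role being in the interval-length estimate. With $N_n$ computable, the algorithm on input $j$ computes the appropriate $n$, computes $N_n$, and outputs the midpoint of the displayed interval, which lies within $2^{-j}$ of $h(X)$; hence $h(X)$ is computable.
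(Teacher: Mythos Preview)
Your sandwich $\frac{\log N_n}{(n+g)^d}\le h(X)\le\frac{\log N_n}{n^d}$ is correct and is exactly the $\phi=0$, $G=\mathbb{Z}^d$ specialization of the paper's Lemma~\ref{lem17} and the estimate following it; the paper's general argument for amenable $G$ replaces the grid of $g$-separated boxes by a $(1-\eta)$-tiling with sufficiently invariant shapes, but on $\mathbb{Z}^d$ this collapses to precisely what you wrote. So the reduction to computing $N_n$ is fine and matches the paper.

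The gap is in your decidability step. Your ``clean target'' --- that a $B_n$-pattern is globally admissible iff it extends locally admissibly to $B_{n+c}$ for some $c$ depending only on $g$ --- is not known to hold, and the layer-by-layer argument you sketch cannot establish it: gluing on a ``fresh globally admissible annulus'' via SI presupposes that you can already recognize globally admissible annuli, which is the very thing at issue. (Indeed, the paper notes explicitly that for $d\ge 3$ no bounds on the computation time of the language are known absent a periodic point; your criterion would give such bounds immediately.) The paper's proof of Theorem~\ref{decidable language}, following Hochman--Meyerovitch, avoids any uniform extension radius. It runs two semi-algorithms simultaneously on a given $a\in\mathcal{A}^{F_n}$: for each $N>n$ it tests (i) whether $a$ fails to extend locally admissibly to $F_N$, and (ii) whether $a$ is \emph{compatible} with every locally admissible $b\in\mathcal{A}^{F_N}$, meaning there is a locally admissible $c\in\mathcal{A}^{F_N}$ agreeing with $a$ on $F_n$ and with $b$ on the outer shell $F_N\setminus F_{N-1}$. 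Condition (i) eventually certifies non-membership by compactness. For (ii), SI guarantees that a globally admissible $a$ is compatible with every \emph{globally} admissible $b$ on $F_{n+2}$; the finitely many locally-but-not-globally admissible $b$'s are each eventually killed by (i) at some finite stage, and once $N$ exceeds all those stages, (ii) holds for $a$. No a priori bound on this $N$ is claimed or needed --- one simply waits for one of (i), (ii) to fire.
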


This was generalized in \cite{marcus-pavlov} to pressure for locally constant potentials taking computable values.  
Our second main result shows that this also holds for more general potentials and groups.

\begin{Theorem2} Let $G$ be a finitely generated, amenable group with decidable word problem. Let $X \subset \mathcal{A}^G$ be a strongly irreducible SFT. % with a known shape $E \Subset G$ satisfying strong irreducibility. 
Then the pressure function $P_X : C(\mathcal{A}^G) \rightarrow \R$ is computable. 
\end{Theorem2}

A key to the arguments of \cite{hoch-meyer} is proving that for an SI $\mathbb{Z}^d$-SFT, it's algorithmically decidable which finite patterns appear in points of $X$. It was shown in \cite{pavlov-schraudner} that if $X$ has a point with finite orbit, then one can give upper bounds on computation time, and such a point is guaranteed for $d = 2$. However, it's unknown for $d \geq 3$ whether SI $\mathbb{Z}^d$-SFTs contain such points, and in this case there are no known bounds on computation time of the language. However, when an oracle for the language of $X$ is given, and $\phi$ is a rational valued locally constant potential, we can provide an upper bound on the computation time.

\begin{Theorem3}
For any $d$, there exists a Turing machine $T$ that, upon input of: 
\begin{itemize} 
\item an SI SFT $X \subset \mathcal{A}^{\Z^d}$, 
\item an oracle for the language of $X$, and
\item a locally constant potential $\phi$,
%\item $n \in \N$ large enough to witness SI/SFT properties for $X$ and local constancy of $\phi$,
\end{itemize} computes $P_X(\phi)$ to precision $2^{-k}$ in $|\mathcal{A}|^{O(2^{kd})}$ time. 
\end{Theorem3}

In fact, when $X$ is a full shift $\mathcal{A}^{\mathbb{Z}^d}$, the computation time can be reduced significantly.

\begin{Theorem4}
There exists a Turing machine $T$ that, upon input of a rational-valued locally constant potential $\phi$ on a full shift $\mathcal{A}^{\mathbb{Z}^2}$, computes $P_X(\phi)$ to precision $2^{-k}$ in $|\mathcal{A}|^{O(2^k)}$ time. 
\end{Theorem4}

In particular, this shows that pressure of locally constant potential on two-dimensional full shifts is always computable in singly exponential time. We have no results on computation time for general countable amenable $G$, since we do not know of any bounds on time required to generate a F\o lner sequence for $G$.

\subsection{Computability of zero-temperature limits}

As corollaries, we can also prove computability results about some properties of \textbf{ground states}, which are another concept from statistical physics. For any potential $\phi$ and subshift $X$, it turns out that the topological pressure is the supremum, over all shift-invariant Borel probability measures $\mu$, of $h(\mu) + \int \phi \ d\mu$, where $h(\mu)$ is measure-theoretic entropy (see \cite{walters} for a definition). For $G$-subshifts, this supremum is always achieved, and any measure $\mu$ achieving the supremum is called an \textbf{equilibrium state}. %All potentials have at least one equilibrium state, and 
For a fixed $\phi$, any weak-* limit point of equilibrium states for $\beta \phi$ for $\beta \rightarrow \infty$ is called a \textbf{ground state} for $\phi$. (The parameter $\beta$ has a physical interpretation as the multiplicative inverse of temperature, and so ground states are often called zero-temperature limits.)

It's easily shown that all ground states must maximize $\int \phi \ d\mu$ among shift-invariant $\mu$, and must achieve the maximal possible entropy among that collection. We call the common entropy of all ground states the \textbf{ground state entropy}, and the common integral of $\phi$ among ground states the \textbf{ground state energy}. 

While there has been work done on characterizing computability properties of ground states themselves, such as in  \cite{ground-state-computable}, we will consider the functions mapping potentials to their ground state entropy and energy. In \cite{zero-temp-computable}, Burr and Wolf proved that even for $\mathbb{Z}$-SFTs, the ground state entropy function is not computable in general, though it is computable from above. We extend this result to more general groups, and also show that in contrast, the ground state energy is computable when the pressure function is. 

%In this paper, we explore the values for $\sup_{ \mu \in M_\sigma(X)} \int \phi d\mu$, which we call {\bf ground state energy}, and the common entropy of ground states, referred to as {\bf ground state entropy} or {\bf residual entropy} . In particular we prove the following: 

\begin{Theorem5} Let $G$ be a countable, amenable group and let $X \subset \mathcal{A}^G$ be a subshift such that $P_X$ is a computable function. Then, the function that sends $\phi$ to its ground state energy 
$\sup_{\nu \in M_\sigma(X)} \int \phi \ d\nu$ is computable. 
\end{Theorem5}

Notice in particular this applies to subshifts $X$ satisfying the hypotheses of Theorem \ref{pressure is computable}, i.e. when $X$ is a strongly irreducible SFT and $G$ is finitely generated with decidable word problem. Similarly, we obtain the following corollary: 

\begin{Cor1}  Let $G$ be a countable, amenable group and let $X \subset \mathcal{A}^G$ be a subshift such that $P_X$ is computable from above. Then the function that sends $\phi$ to its ground state energy $\sup_{\nu \in M_\sigma(X)} \int \phi \ d\mu$ is computable from above. 
\end{Cor1}

Further note that this applies to subshifts $X$ satisfying the hypotheses of Theorem \ref{pressure is computable from above}, i.e. ground state energy is always computable from above given an enumeration for a forbidden list of $X$ when $G$ is finitely generated with decidable word problem. We finally use Theorem \ref{ground state energy} to obtain our final result: 

\begin{Cor2} Let $G$ be a countable, amenable group and let  $X \subset \mathcal{A}^G$ be a subshift such that $P_X$ is computable. Then, the function that sends $\phi$ to the ground state entropy of $\phi$ is computable from above. 
\end{Cor2}

Again we note that this applies to any subshift satisfying the conditions of Theorem \ref{pressure is computable} including strongly irreducible SFTs when $G$ is finitely generated with decidable word problem, and a large class of subshifts when $G= \Z$ including SFTs, sofic shifts, coded subshifts, and many other well studied examples.  We note that there is no hope of extending Corollary~\ref{ground state entropy} to guarantee computability; we give a simple example of a locally constant $\phi$ on a $\mathbb{Z}^2$ full shift for which the ground state entropy is not computable.

The structure of the paper is as follows. We begin Section 2 with the relevant preliminaries, discussing relevant topics from computability theory and introducing finitely generated amenable groups. We then formally define subshifts and pressure, discussing relevant computability properties throughout. 

In Section 3 we prove Theorem \ref{pressure is computable}, using on so-called quasi-tilings studied in \cite{ornsteinweiss} and \cite{dhz} to create our algorithm for computing pressure.
Section 4 uses similar techniques to prove Theorem \ref{pressure is computable from above}. %in the case where we only have access to an enumerated forbidden list for our subshift $X$. 
In Section 5 we then move to ground states, proving Theorem \ref{ground state energy} and Corollaries \ref{computable from above cor ground state} and \ref{ground state entropy}. We conclude with Section 6, where we prove upper bounds on computation time for computation of pressure of a locally constant potential on an SI $\mathbb{Z}^d$-SFT.

\section{Preliminaries}\label{prelim}

\subsection{Computability Theory} 

For a comprehensive introduction to computability theory we refer the reader to \cite{tutorial-computability} and \cite{comp-analysis-book}. In this paper, we will be interested in computability properties of certain numbers and functions. We will begin by outlining relevant computability definitions in the case of real numbers and real valued functions, and then move into the more general computable metric space setting. 

For any $\alpha \in \R$, we say an {\bf oracle for $\alpha$} is a function $\gamma : \N \rightarrow \Q$ such that for all $n \in \N$, $|\gamma(n) - \alpha| < 2^{-n}$. We say $\alpha \in \R$ is {\bf computable} if there exists a Turing machine that is an oracle for $\alpha$. We will additionally say that $\alpha \in \R$ is {\bf computable from above} if there exists a Turing machine $T$ such that $(T(n))_{n \in \N}$ is a sequence of rationals decreasing to $\alpha$. Notice that we have no control over the rate of convergence in this case. A function $f: \R \rightarrow \R$ is {\bf computable} if there exists a Turing machine $T$ such that for every $x \in \R$ and for any oracle $\gamma$ for $x$, $|T(\gamma, n) - f(x)| < 2^{-n}$. %\textcolor{green}{Do we need to clarify that $\gamma$ fills the tape before the machine is run? Or is it that $T$ is allowed to access as much of $\gamma$ as it needs?}

We now let $X$ be a separable metric space with metric $d$ and let $(s_n)$ be a dense sequence of points in $X$. We say that $(X, d, (s_n))$ is {\bf a computable metric space} if there exists a Turing machine $T$ such that for all $i, j, n \in \N$, $|T(i, j, n) - d(s_i, s_j)| < 2^{-n}$. In this context, $(s_n)$ acts as our accessible dense subset in $X$, similar to $\Q$ in $\R$. With this in mind, it is natural to say for any $x \in X$, an oracle for $x$ is a function $\gamma: \N \rightarrow \N$ such that $d(x, s_{\gamma(n)}) < 2^{-n}$. Similar to above we say $x \in X$ is computable (with respect to $d$ and $(s_n)$) if there exists a Turing machine $T$ that is an oracle for $x$. We now let $(X, d, (s_n))$ be a computable metric space. We say $f: X \rightarrow \R$ is computable if there exists a Turing machine $T$ such that for any $x \in X$, and any oracle $\gamma$ for $x$, for all $n \in \N$, $|T(\gamma, n) - f(x)| < 2^{-n}$. 
Similarly, $f$ is computable from above if, for any $x$ and oracle $\gamma$ for $x$, the sequence $(T(\gamma, n))$ decreases to $f(x)$.

% For a given subshift $X$, the space of potentials is $C(X)$, the set of all real-valued continuous functions on $X$ equipped with the uniform norm. Since we will require that the domain of our pressure function is a computable metric space, we will consider the domain to be $C(\mathcal{A}^G)$. 

% Since $C(X)$ is exactly the set of potentials $C(\mathcal{A}^G)$ restricted to $X$, and to discuss the computability properties of the pressure function, we will consider $C(\mathcal{A}^G)$ to be our domain for $P_X$, where we implicitly take the restriction $\phi$ to $X$. 

% \textcolor{blue}{We note here that $C(X)$ is exactly the set of potentials $C(\mathcal{A}^G)$ restricted to $X$.  As such, we will be considering the computability properties of $P_X : C(\mathcal{A}^G) \rightarrow \R$ where $P_X(\phi)$ is the topological pressure of the subshift $X$ coupled with the potential $\phi|_X$. } 

For a given subshift $X$, the space of potentials is $C(X)$, the set of all real-valued continuous functions on $X$ equipped with the uniform norm. Since we will require that the domain of our pressure function is a computable metric space, we will consider the domain of $P_X$ to be $C(\mathcal{A}^G)$. We will take $(s_n)$ to be any computable enumeration of rational-valued functions in $C(\mathcal{A}^G)$ which are \textbf{locally constant}, meaning that the output for an input $x$ depends on only finitely many letters of $x$. It is not hard to show that there exists a Turing machine that approximates distances in the uniform norm topology between rational-valued locally constant potentials, and so $(C(\mathcal{A}^G), d_{|| \cdot ||_\infty}, (s_n))$ is a computable metric space. It is through these locally constant functions that we will determine the computability properties of $P_X : C(\mathcal{A}^G) \rightarrow \R$.

% . Notice since $(s_n)$ is a countable, dense set with the appropriate properties, $(C(X), d_{|| \cdot ||_\infty}, (s_n))$ is a computable metric space. It is through these locally constant functions that we will determine the computability properties of $P_X : C(X) \rightarrow \R$. 

\subsection{Countable Amenable Groups}

We let $G$ be a countable group and denote the identity of $G$ by $e$. We will use the notation $K \Subset G$ to indicate that $K$ is a finite subset of $G$. A (left) \textbf{F\o lner sequence} is a sequence $F_1 \subsetneq F_2 \subsetneq F_3 \subsetneq \cdots$ of finite subsets of $G$ such that $\bigcup_{n \in \N} F_n = G$ and for any $K \Subset G$, $\lim_{n \rightarrow \infty} |K F_n  \triangle F_n| / |F_n| = 0$. %We say $G$ is amenable if there exists a F\o lner sequence in $G$. 
For any $\epsilon > 0$ and $F, K \Subset G$, we say $F$ is {\bf $(K, \epsilon)$-invariant} if $|KF \triangle F| / |F| < \epsilon$. 
A countable group $G$ is \textbf{amenable} if it admits a F\o lner sequence or, equivalently, if for every $\epsilon > 0$ and $K \subset G$, there exists a $(K, \epsilon)$-invariant set.

From now on, we assume that $G$ is finitely generated. We say that $G$ has \textbf{decidable word problem} if there exists a Turing machine that, upon input of a finite string of generators (e.g. $g_1 g_3 g_4^{-1} g_1$), returns whether or not the string was equal to the identity element $e$. Examples of finitely generated, amenable groups with decidable word problem include 
$\Z^d$, Heisenberg groups, and Baumslag-Solitar groups. For the remainder of the paper we will assume our group $G$ has decidable word problem. For completeness we give a proof for the following well-known fact (proved, for instance, in \cite{cavaleri}): 

\begin{lemma}\label{computable folner} Let $G$ be a finitely generated, amenable group with decidable word problem. Then there exists a Turing machine $T$ such that upon input $n$, returns $T(n) \Subset G$ such that $\{ T(n) \}$ forms a F\o lner sequence in $G$. 
\end{lemma}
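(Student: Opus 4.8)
The plan is to enumerate elements of $G$ using the decidable word problem, and then greedily build finite sets that are increasingly invariant. The core difficulty is that amenability guarantees $(K,\epsilon)$-invariant sets \emph{exist}, but gives no a priori bound on where to find one, nor on its size; a naive search could loop forever if we insist on a particular target size. The resolution is to search over \emph{all} finite subsets of an initial segment of the enumerated group elements, testing each for the desired invariance, and accept the first one found — such a search always terminates because amenability promises some invariant set exists, and by enumerating subsets of larger and larger initial segments we will eventually reach it.

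Here is how I would carry it out. First, fix a finite symmetric generating set $S$ for $G$. Using the decidable word problem, I can algorithmically test equality of two words in the generators (form the word $w_1 w_2^{-1}$ and ask whether it equals $e$), hence build a computable injective enumeration $g_1, g_2, g_3, \ldots$ of the distinct elements of $G$ (enumerate all finite words in $S$ in shortlex order, discarding any word equal to a previously seen one). This also lets me compute, for any finite list of words and any $k$, the set $S^{\pm 1} \cdot \{g_{i_1}, \ldots, g_{i_m}\}$ as a list of enumerated indices, since multiplication is concatenation of words and membership is decided by the word problem. Thus the predicate ``$F$ is $(S, \epsilon)$-invariant'' is decidable for any given finite $F \Subset G$ (presented as a list of enumerated elements) and rational $\epsilon > 0$, because $|SF \triangle F|/|F|$ is a computable rational.

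Now I define $T(n)$ recursively. Having already produced $F_1 \subsetneq \cdots \subsetneq F_{n-1}$, I want $F_n \supsetneq F_{n-1}$ that is $(S, 1/n)$-invariant. (Controlling $(S,\epsilon)$-invariance for the generating set $S$ suffices to control $(K,\epsilon')$-invariance for arbitrary $K \Subset G$, since any $K$ lies in some $S^{\leq r}$ and $|S^{\leq r} F \triangle F|/|F|$ can be bounded in terms of $|SF\triangle F|/|F|$ by a telescoping argument — this is a standard fact and I would cite it or include the one-line estimate.) To find such an $F_n$: for $m = 1, 2, 3, \ldots$, enumerate every subset $F$ of $\{g_1, \ldots, g_m\}$ with $F \supseteq F_{n-1}$ and $F \neq F_{n-1}$, and test whether $F$ is $(S, 1/n)$-invariant; output the first such $F$ found and halt. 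This search terminates: by amenability there is \emph{some} $(S, 1/n)$-invariant set $F'$, and then $F' \cup F_{n-1} \cup \{g_j\}$ for a suitable fresh $j$ is still $(S,\epsilon)$-invariant for $\epsilon$ slightly larger than $1/n$ — to be safe I would instead ask for $(S, 1/(2n))$-invariance of $F'$ and note that adding finitely many points changes the ratio by an arbitrarily small amount once $|F'|$ is large, which I can arrange since arbitrarily large invariant sets exist (if $F'$ is $(S,\delta)$-invariant then so is a disjoint union of translates, growing its size while preserving the ratio). Once such an enlarged $F'$ exists, it appears as a subset of $\{g_1, \ldots, g_m\}$ for all large enough $m$, so the search reaches it.

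Finally I verify that $\{T(n)\}$ is a F\o lner sequence: the construction forces $T(1) \subsetneq T(2) \subsetneq \cdots$; it forces $\bigcup_n T(n) = G$ because $T(n)$ strictly grows at each step and each $T(n)$ is a subset of some $\{g_1,\ldots,g_m\}$ — actually to guarantee exhaustion I would additionally require $g_n \in F_n$ in the search (ask for $F \ni g_n$), which is compatible with the termination argument by the same "add finitely many points" remark; and the invariance condition $|SF_n \triangle F_n|/|F_n| < 1/n \to 0$ upgrades to $|KF_n \triangle F_n|/|F_n| \to 0$ for every $K \Subset G$ by the telescoping estimate above. The main obstacle, as noted, is purely the termination of the unbounded search, and the key trick that makes it go through is that invariant sets come in arbitrarily large sizes (via disjoint unions of translates), so the minor perturbations needed to also satisfy $F \supsetneq F_{n-1}$ and $g_n \in F$ cost negligibly in the invariance ratio.
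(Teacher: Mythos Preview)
Your proposal is correct and shares the paper's core idea: enumerate finite subsets of $G$ (using the decidable word problem), and at each stage run an unbounded search for one satisfying the needed invariance and containment conditions, with termination guaranteed by amenability.

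The one genuine difference is in \emph{what invariance you test}. You test only $(S,1/n)$-invariance against the fixed generating set $S$ and then invoke a telescoping estimate ($|S^{\le r}F\triangle F|/|F|$ controlled by $|SF\triangle F|/|F|$) to upgrade to the full F\o lner condition. The paper instead enumerates all finite subsets $\{E_k\}$ of $G$ and, at stage $n$, tests $(E_k,1/n)$-invariance simultaneously for every $k\le n$ (and requires $T(n)\supsetneq T(n-1)\cup\bigcup_{k<n}E_k$ for exhaustion and nesting). This makes the F\o lner property immediate without any telescoping, at the cost of a heavier test at each step. Your route trades that for a lighter per-step test plus one auxiliary lemma; both are perfectly valid, and your termination argument (disjoint translates to get arbitrarily large $(S,\delta)$-invariant sets, then absorb $F_{n-1}\cup\{g_n\}$ with negligible change to the ratio) is exactly what is needed --- the paper leaves the analogous step as a one-line appeal to amenability.
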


\begin{proof} We first fix an enumeration of the finite subsets of $G$,  $\{ E_k : k \in \N \}$ which can be done in a computable manner since $G$ has decidable word problem by assumption. We let $T(1) = \{ e \}$ and inductively define $T(n)$ for all $n > 1$ as follows. 

Fix $i = 1$, test the following conditions: 
\begin{itemize}
\item $T(n-1), \bigcup_{k=1}^{n-1} E_k \subsetneq E_i$, 
\item For all $1 \leq k \leq n$, $| E_k E_i \triangle E_i| / |E_i| < 1/n$. 
\end{itemize}
All of these conditions are testable in finite time since $G$ has decidable word problem. If any of these conditions fail, increment $i$ and repeat the above checks. Since $G$ is a countable amenable group, we know the conditions will hold for some $i$, at which point we return $T(n) = E_i$. 

Clearly by construction $\{ T(n) \}$ forms a F\o lner sequence in $G$. 
\end{proof}

Unfortunately in general the above approach does not provide us with any control over the growth rate of the sets in the F\o lner sequence, or computation time. However, many countable amenable groups have extremely simple 
F\o lner sequences. For example, in $\Z^d$, the sets $[-n, n]^d$ form a F\o lner sequence such that every element of the sequence exactly tiles $\Z^d$ by translates.

As an extension of this concept of F\o lner sets tiling $G$, we will later be taking advantage of the proof techniques used in \cite {dhz} by Downarowicz, Huczek, and Zhang regarding exact tilings of an amenable group. We will define three kinds of tilings that will be of use in this paper, all originally due to Ornstein and Weiss in \cite{ornsteinweiss}. %: $\alpha$-disjoint, $(1-\alpha)$-covering quasitilings, $(1-\alpha)$-tilings, and exact tilings. 

For a collection $\mathcal{T}$ of finite subsets of $G$ and for $\alpha \in [0, 1]$, we say $\mathcal{T}$ {\bf $(1-\alpha)$-covers} $G$ if for any F\o lner sequence we have 
$$\liminf_{n \rightarrow \infty} \frac{ | \bigcup_{T \in \mathcal{T}} T \cap F_n|}{|F_n|} \geq 1-\alpha. $$
We will also say that $\mathcal{T}$ is {\bf $\alpha$-disjoint } if there exists a map $T \mapsto T^\circ \subset T$ $(T \in \mathcal{T})$, such that 
\begin{itemize}
\item For distinct $T, S \in \mathcal{T}$, $T^\circ \cap S^\circ = \emptyset$, and 
\item For $T \in \mathcal{T}$, $|T^\circ|/|T| > 1-\alpha$.
\end{itemize}
In other words, $\mathcal{T}$ is $\alpha$-disjoint if we can remove at most $\alpha$ proportion of each tile to make the collection disjoint. 

We now say that a collection of finite subsets of $G$, $\mathcal{T} = \{ T_i : 1 \leq i \leq k \}$, is a {\bf quasitiling} if there exists a corresponding collection of finite shapes $\mathcal{S} = \{ S_i : 1 \leq i \leq k \}$ s.t. for every $T \in \mathcal{T}$, there exist $g \in G$ and $s \in \mathcal{S}$ s.t. $T = Sg$. We say that the quasitiling $\mathcal{T}$ is {\bf induced by} $\mathcal{S}$.

We now say that a collection of disjoint finite tiles $\mathcal{T}$ is a {\bf $(1-\alpha)$-tiling} if it is a $(1-\alpha)$-covering quasitiling. Finally, a collection of disjoint finite tiles $\mathcal{T}$ is an {\bf exact tiling} if $G = \bigcup_{T \in \mathcal{T}}T$. We now note an important result from \cite{dhz}:

% For a countable, amenable group $G$ and $\alpha \in (0, 1)$, we define a {\bf $(1-\alpha)$-tiling} $\mathcal{T} = \{ T_i : 1 \leq i \leq k \}$ to be a finite collection of disjoint subsets of $G$ (tiles) such that along any F\o lner sequence we have $\lim \inf_{n \in \N} \frac{| \bigcup_{T \in \mathcal{T}}T \cap F_n|}{|F_n|} \geq (1-\alpha)$ and for each $T_i \in \mathcal{T}$, there exists a finite shape $S_i \Subset G$ and a collection of centers $C(S_i) \subset G$ such that $T_i =  S_i C(S_i)$ and for distinct $c_1 , c_2 \in C(S_i)$, $S_i c_1 \cap S_i c_2 = \emptyset$.  We say that $\mathcal{T}$ is an {\bf exact tiling} if $G = \bigcup_{T \in \mathcal{T}}T$. We now note an important result from \cite{dhz}: 

% For a countable, amenable group $G$, a {\bf finite tiling} $\mathcal{T} = \{ T_i : 1 \leq i \leq k \}$ is a collection of tiles such that $\bigcup_{i \leq k} T_i = G$ and for each $T_i \in \mathcal{T}$, there exists a finite shape $S_i \Subset G$, and a collection of centers $C(S_i) \subset G$ such that $T_i =  S_i C(S_i)$. In particular, a tiling $\mathcal{T}$ is called {\bf exact} if for distinct $c_1 , c_2 \in C(S_i)$, $S_i c_1 \cap S_i c_2 = \emptyset$ and for distinct $T_i, T_j \in \mathcal{T}$, $T_i \cap T_j = \emptyset$. We also note it can be assumed that $e \in S_i$ for every $i$. We now note an important result from \cite{dhz}: 

\begin{fact}[Theorem 4.3 \cite{dhz}] Given any infinite, countable amenable group $G$, any $\epsilon > 0$, and any finite $K \Subset G$, there exists an exact tiling $\mathcal{T}$ where each shape is $(K, \epsilon)$-invariant. 
\end{fact}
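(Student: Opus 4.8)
The plan is to reproduce the argument of Downarowicz, Huczek, and Zhang, whose engine is the Ornstein--Weiss quasitiling theorem. Fix $K \Subset G$ and $\epsilon > 0$. First I would invoke Ornstein--Weiss: for every target $\eta > 0$ there is a finite family $\mathcal{S} = \{S_1, \dots, S_k\}$ of finite subsets of $G$, together with a threshold $\delta > 0$, such that every $(\mathcal{S} \cup \mathcal{S}^{-1}, \delta)$-invariant finite set admits an $\eta$-disjoint quasitiling by translates of $\mathcal{S}$ whose tiles $(1-\eta)$-cover it; by drawing each $S_i$ from a F\o lner sequence we may additionally arrange that every $S_i$ is $(K, \epsilon/2)$-invariant. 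Applying this along a F\o lner exhaustion of $G$ and extracting a limit (a compactness/diagonal argument over the finitely many possible local patterns of an $\mathcal{S}$-quasitiling, viewed as configurations over a finite alphabet) yields an $\eta$-disjoint quasitiling $\mathcal{T}$ of all of $G$ by translates of $\mathcal{S}$ whose tiles $(1-\eta)$-cover $G$.

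Next I would disjointify: using the $\eta$-disjointness map $T \mapsto T^\circ$, replace each tile $T = S_i g$ by $T^\circ = S_i^\circ g$. The resulting family is genuinely disjoint and still covers at least a $(1 - 2\eta)$-fraction of $G$, and one checks the truncated shapes $S_i^\circ$ remain $(K, \epsilon)$-invariant once $\eta$ is small enough, since $|K S_i^\circ \triangle S_i^\circ| \le |K|\,|S_i \setminus S_i^\circ| + |K S_i \triangle S_i| + |S_i \setminus S_i^\circ| \le \bigl(|K|\eta + \tfrac{\epsilon}{2} + \eta\bigr)|S_i|$ while $|S_i^\circ| \ge (1-\eta)|S_i|$. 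So at this stage we have a genuinely disjoint quasitiling with finitely many shapes, all $(K,\epsilon)$-invariant, leaving a set of gaps $H = G \setminus \bigcup_{T \in \mathcal{T}} T^\circ$ of upper Banach density at most $2\eta$.

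The hard part will be removing the gaps while keeping both an exact cover and only finitely many shapes, each still $(K,\epsilon)$-invariant; a naive recursive filling of $H$ by ever-smaller tiles would destroy the finiteness of the shape family. Here I would follow DHZ and pass to the dynamical setting: realize quasitilings as points of a $G$-subshift over a finite alphabet encoding the tiling structure, so that "having small gaps" becomes "the gap-indicator has small integral against every invariant measure." One then defines a composition operation on such dynamical quasitilings, where the second quasitiling is built from a much more invariant finite family so that a generic second-level tile contains first-level tiles covering all but an $\eta$-fraction of it; this composition keeps finitely many shapes (all $(K,\epsilon)$-invariant, by the truncation estimate above) and improves the worst-case gap density roughly from $\delta$ to $\delta^2$. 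Iterating from $\delta_0 = 2\eta$ drives the gap density to $0$ geometrically, and a limit in the (compact) space of dynamical quasitilings produces one with no gaps at all, i.e. supported entirely on exact tilings; reading off any single point gives the desired exact tiling of $G$ with $(K,\epsilon)$-invariant shapes.

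The main obstacle, and essentially the entire content of the theorem, is this last step: quasitilings inevitably leave gaps, and the only elementary fixes break the finite-shape requirement, so one genuinely needs DHZ's composition-and-limit machinery (or an equivalent bookkeeping of a tree of nested tiles) to trade "a sequence of quasitilings with shrinking gaps" for "one exact tiling" without the shape family growing unboundedly. The Ornstein--Weiss input and the invariance bookkeeping after truncation are routine by comparison.
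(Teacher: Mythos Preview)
The paper does not prove this statement at all: it is stated as a \emph{fact} and attributed to Theorem 4.3 of \cite{dhz}, with no argument given. The authors immediately remark that they will not even use its full strength (exact tilings), only the weaker $(1-\xi)$-tilings constructed in their Lemmas 3.2--3.4, and that what matters for them is that the DHZ construction is algorithmic. So there is nothing in the paper to compare your proposal against.

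That said, your sketch is a fair outline of the DHZ argument and correctly isolates the genuine difficulty: Ornstein--Weiss gives quasitilings with small gaps, disjointification is routine and preserves approximate invariance, and the nontrivial content is turning ``arbitrarily small gaps with finitely many shapes'' into ``no gaps with finitely many shapes'' via the dynamical composition-and-limit mechanism. One small caution: the improvement at each composition step in DHZ is not literally $\delta \mapsto \delta^2$; the bookkeeping is more delicate (the new gap density is controlled by a product of the old one and the new quasitiling's defect, together with boundary terms), so if you ever need to write this out you should follow their estimates rather than the heuristic squaring. But as a proof \emph{plan} this is on target, and in any case the present paper expects you simply to cite the result.
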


In fact, their result included statements regarding the entropy of the tiling space which we have omitted since they are not necessary for the purposes of this paper. Additionally, we will not be using the full strength of their theorem, as we do not need exact tilings for our purposes. The main utility for us is that their construction is clearly algorithmically computable. %we will simply outline that the initial steps in constructing such tiles can be done via a computable process, which will be sufficient for our results. 

\subsection{Subshifts} 

In this paper, we will concern ourselves with subshifts over a finite alphabet. Let $\mathcal{A}$ be a finite set with the discrete topology and let $G$ be a finitely generated, amenable group with decidable word problem. We endow $\mathcal{A}^G$ with the product topology - this will be our ambient space. The dynamics will be induced by the {\bf left shift map} $\sigma$ where for all $g , h \in G$, for all $x \in \mathcal{A}^G$, $\sigma_gx(h) = x(hg)$. We call a closed, $\sigma$-invariant $X \subset \mathcal{A}^G$ a {\bf subshift}. For any $H \subset G$, we will denote the restriction of $x$ to $H$ by $x|_H$. We say that a finite word $w \in A^F$ is a {\bf subword} of a (finite or infinite) word $v \in A^H$ if there exists $g \in G$ so that $Fg \subset H$ and $\forall f \in F$, $w(f) = v(fg)$.

For any $H \Subset G$, we define the {\bf $H$-language} of the subshift $X$ by $L_H(X) = \{ x|_H : x \in X \}$. The {\bf language of $X$} will be $L(X) = \bigcup_{H \Subset G} L_H(X)$. Denote $\mathcal{A}^* = \bigcup_{H \Subset G} \mathcal{A}^H$. For a collection $\mathcal{F} \subset \mathcal{A}^*$ of {\bf forbidden words}, we denote the subshift induced by 
$\mathcal{F}$ by $X_{\mathcal{F}} = \{ x \in \mathcal{A}^G : \textrm{ no } w \in \mathcal{F} \textrm{ is a subword of } x \}$. 
It is easy to see that for any subshift $X$, if we let $\mathcal{F} = \mathcal{A}^* \backslash L(X)$ then $X = X_{\mathcal{F}}$. Indeed, such an $\mathcal{F}$ need not be unique.  

We say a subshift $X$ is a {\bf subshift of finite type} (or SFT) if there exists a finite $\mathcal{F} \Subset \mathcal{A}^*$ such that $X = X_{\mathcal{F}}$. Additionally, we say an SFT $X$ is {\bf strongly irreducible} (or SI) if there exists a universal $E \Subset G$ such that for any $F, H \Subset G$ with $v \in L_F(X)$, $w \in L_H(X)$, if $FE \cap H = \emptyset$, then $vw \in L_{F \cup H}(X)$. %We say $E \Subset G$ {\bf satisfies strong irreducibility} for an SI SFT $X$ if $E$ satisfies the definition above. We note here that it is unknown whether or not it is decidable if a given shape $E$ satisfies strong irreducibility for an SI SFT $X$. Because of this, we will frequently assume that for an SI SFT $X$, a known shape $E$ that satisfies strong irreducibility is given to us. 
%\textcolor{green}{NOTE: As I wrote you, I suspect we don't need any of this.}

A basis for the product topology on $\mathcal{A}^G$ can be taken in the from of cylinder sets. For any  $F \Subset G$, $v \in \mathcal{A}^F$, define the {\bf cylinder set} of $v$ to be 
$$[v] = \{ x \in \mathcal{A}^G : x_F = v\}. $$
For any subshift $X$, the standard subspace topology has a basis of cylinder sets intersected with $X$. %We will frequently take the cylinder set restricted to a subshift $X$, inducing the standard subspace topology. 

For a given subshift $X \subset \mathcal{A}^G$, we denote $M_\sigma(X)$ to be the set of all $\sigma$-invariant, Borel probability measures on $X$. A measure $\mu$ is $\sigma$-invariant if for all measurable $A \subset X$, for all $g \in G$, $\mu(A) = \mu(\sigma_{g^{-1}}A)$. We note here that $M_\sigma(X)$ is compact in the weak-* topology and non-empty for any subshift $X \subset \mathcal{A}^G$.

\subsection{Decidability of Language} 

For a subshift $X$, a natural question is: given a shape $E \Subset G$ and a configuration $v \in \mathcal{A}^E$, is $v \in L(X)$? With this in mind, we say a subshift $X$ has {\bf decidable language} if there exists a Turing machine $T$ such that for any given shape $E \Subset G$, and $v \in \mathcal{A}^E$, upon input $v$, $T$ halts in finite time returning whether or not $v \in L(X)$.  The proof of the following theorem is adapted from that of Hochman and Meyerovitch in \cite{hoch-meyer} which applies to the $G=\Z^d$ case. 

\begin{fact}\label{decidable language} Let $G$ be finitely generated amenable with decidable word problem, $X \subset \mathcal{A}^G$ be an SI SFT with finite forbidden list $\mathcal{F}$. %, a known shape $E \Subset G$ satisfying strong irreducibility, and let $\{ F_n \}$ be a given F\o lner sequence. \textcolor{green}{NOTE: how could we need the F\o lner sequence as a hypothesis? It's not part of the statement ad it's guaranteed by an earlier result anyway...} 
Then $X$ has decidable language. 
\end{fact}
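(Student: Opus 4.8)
The strategy, following Hochman–Meyerovitch, is to exhibit two semi-algorithms: one that halts and accepts when $v \in L(X)$, and one that halts and accepts when $v \notin L(X)$. Running both in parallel then decides membership. The semi-algorithm for $v \notin L(X)$ is the easy direction and works for any SFT: since $X$ is an SFT with finite forbidden list $\mathcal{F}$, one has $v \notin L(X)$ if and only if \emph{every} configuration on some finite shape $F \supseteq E$ (with $v$ on $E$) contains a translate of some word in $\mathcal{F}$. So the semi-algorithm enumerates larger and larger finite shapes $F \Subset G$ containing $E$ (using decidability of the word problem to enumerate $G$ and hence finite subsets), enumerates all configurations on $F$ extending $v$, and checks whether each one contains a forbidden pattern; if for some $F$ they all do, it halts and reports $v \notin L(X)$. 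By a compactness argument (König's lemma style: if $v$ had no legal extension to any finite shape, a limit point in $\mathcal{A}^G$ would give a point of $X$ containing $v$), this halts precisely when $v \notin L(X)$.

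The semi-algorithm for $v \in L(X)$ is where strong irreducibility is essential. Let $E' \Subset G$ be the universal separation set witnessing SI. The idea is that $v \in L(X)$ if and only if one can find a finite "certificate": a single configuration on some large finite shape $F$ that (a) contains $v$, (b) is \emph{locally admissible}, i.e.\ contains no translate of a word in $\mathcal{F}$, and (c) is, roughly, a $(E', \epsilon)$-invariant shape — or more precisely, whose shape is sufficiently invariant that it can be "periodized" or tiled. Concretely, I would use the exact tiling result (Theorem 4.3 of \cite{dhz}) cited above: given $\epsilon$ small and $K = E' \cup (\text{support data for }\mathcal{F})$, there is an exact tiling $\mathcal{T}$ of $G$ by $(K,\epsilon)$-invariant shapes. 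If we have a locally admissible configuration on a shape $S$ that is one of these tile shapes and large enough to contain $E'$-collars, then by strong irreducibility we can fill each tile of $\mathcal{T}$ with (a shifted copy of) the pattern on the sub-shape $S^\circ$, leaving $E'$-buffers between tiles, and use SI repeatedly to glue all these patterns into a genuine point of $X$; since one of the tiles contains $v$, this shows $v \in L(X)$. So the semi-algorithm enumerates finite shapes $F$ and configurations $u \in \mathcal{A}^F$ extending $v$, checks that $u$ is locally admissible and that $F$ is $(E', \epsilon)$-invariant (finite checks, using decidable word problem), and halts when it finds one.

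The crux — and the main obstacle — is proving that the $v \in L(X)$ semi-algorithm actually halts whenever $v \in L(X)$: that is, every word in $L(X)$ admits such a finite "sufficiently invariant, locally admissible" certificate. If $v \in L(X)$, then $v = x|_E$ for some $x \in X$; restricting $x$ to a large $(E',\epsilon)$-invariant shape $F \supseteq E$ (which exists by amenability, and which the algorithm will eventually enumerate) gives a locally admissible extension, so the search does terminate. The subtlety is entirely in the \emph{soundness} direction just sketched (gluing via the tiling), and the technical work is checking that the accumulated boundary errors across the tiling don't obstruct the inductive application of strong irreducibility — this is exactly the kind of estimate the Ornstein–Weiss / \cite{dhz} machinery is designed to handle, and the decidability of the word problem is what makes all the finite combinatorial checks (membership in $G$, computing $KF \triangle F$, enumerating $\mathcal{A}^F$) effective.
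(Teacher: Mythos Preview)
Your two semi-algorithms are the right overall architecture, and the ``$v \notin L(X)$'' half is fine and matches the paper. The gap is in the \emph{soundness} of your ``$v \in L(X)$'' semi-algorithm. You propose to halt when you find a locally admissible extension $u$ of $v$ to some $(E',\epsilon)$-invariant shape $S$, and then to certify $v \in L(X)$ by placing copies of $u|_{S^\circ}$ on the tiles of an exact tiling and gluing them with strong irreducibility. But strong irreducibility is a statement about patterns in $L(X)$: it says that if $w_1 \in L_{F_1}(X)$ and $w_2 \in L_{F_2}(X)$ with $F_1 E' \cap F_2 = \varnothing$, then $w_1 w_2 \in L_{F_1 \cup F_2}(X)$. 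You have only established that $u|_{S^\circ}$ is \emph{locally} admissible, not that it lies in $L(X)$, so SI gives you nothing to glue with. This is not a boundary-error issue that the Ornstein--Weiss/\cite{dhz} estimates will absorb; it is a circularity: to invoke SI you already need to know the pattern is globally admissible, which is precisely what the algorithm is supposed to decide. And in general, for SI SFTs in dimension $\geq 2$ (and on more general groups), a locally admissible pattern on an arbitrarily invariant shape need \emph{not} be globally admissible, so your halting criterion can fire on a pattern $v \notin L(X)$.

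The paper avoids this circularity with a different finite certificate. Working along a F\o lner sequence with $F_n E \subset F_{n+1}$, it declares $a \in \mathcal{A}^{F_n}$ globally admissible when, for some $N > n$, $a$ is \emph{compatible} with \emph{every} locally admissible $b \in \mathcal{A}^{F_N}$ (meaning there is a locally admissible $c$ on $F_N$ agreeing with $a$ on $F_n$ and with $b$ on the outer annulus $F_N \setminus F_{N-1}$). The point is that this condition is upward stable in $N$ and, once it holds, produces locally admissible extensions of $a$ to every $F_M$, so compactness gives global admissibility without ever invoking SI on a merely locally admissible pattern. Strong irreducibility enters only in the converse direction, where it is applied to \emph{globally} admissible patterns to show that if $a \in L(X)$ then the compatibility condition is eventually met. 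Your tiling idea does not supply an analogue of this certificate.
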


Take $\mathcal{F}$ a forbidden list for the SI SFT $X$, and assume without loss of generality that $\mathcal{F} \subset A^E$ where $E$ satisfies SI.
For each $F \Subset G$ and $v \in \mathcal{A}^F$, we say $v$ is {\bf locally admissible} (according to $\mathcal{F}$) if for every $w \in \mathcal{F}$, $w$ is not a subword of $v$. Notice that since $\mathcal{F}$ is finite, it is decidable whether or not $v$ is locally admissible for any $v \in \mathcal{A}^F$. Additionally, we say for $v \in \mathcal{A}^F$, $v$ is {\bf globally admissible} if $v \in L(X)$. For the remainder of the proof we assume without loss of generality (by passing to a subsequence if necessary) that our F\o lner sequence satisfies for each $n \in \N$, $F_n E \subset F_{n+1}$. 

\begin{defn} For $n < N$, locally admissible $a \in \mathcal{A}^{F_n}$ and $b \in \mathcal{A}^{F_N}$, we say that $a$ and $b$ are {\bf compatible} if there exists a locally admissible $c \in \mathcal{A}^{F_N}$ such that $c |_{F_n} = a$ and $c|_{F_N \backslash F_{N-1}} = b_{F_N \backslash F_{N-1}}$. 
\end{defn}

We define the following conditions for any $n \in \N$, $a \in \mathcal{A}^{F_n}$ and $N \geq n$. 
\begin{enumerate}
\item[(i)] $a \neq b|_{F_n}$ for every locally admissible $b \in \mathcal{A}^{F_N}$, 
\item[(ii)] $a$ and $b$ are compatible for every locally admissible $b \in \mathcal{A}^{F_N}$. 
\end{enumerate}

\begin{lemma} For any $a \in \mathcal{A}^{F_n}$, the following hold: 
\begin{itemize}
\item (a) $a$ fails to be globally admissible if and only if (i) holds for some $N > n$ %(equivalently, all sufficiently large $N > n$), 
\item (b) $a$ is globally admissible if and only if (ii) holds for some $N > n$ %(equivalently, all sufficiently large $N > n$). 
\end{itemize}
\end{lemma}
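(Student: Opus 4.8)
The plan is to prove both directions by using strong irreducibility to convert the "local" information at finite scales into genuine membership in $L(X)$, and to use a compactness/König's lemma argument to show that one of the two alternatives must eventually occur, which is what makes the resulting procedure a decision algorithm. Note first that (a) and (b) are logically complementary as statements about a fixed $a$: exactly one of "$a$ globally admissible" and "$a$ not globally admissible" holds, so it suffices to prove the two ``if and only if'' equivalences, and then observe that for \emph{every} locally admissible $a$, at least one of (i), (ii) holds for some $N>n$ (since one of the two disjoint conclusions must be true). This last observation, together with the fact that (i) and (ii) are each decidable for fixed $n, N$ (only finitely many locally admissible words of each shape, and local admissibility and compatibility are finitely checkable because $\mathcal{F}$ is finite), is exactly what yields the decision procedure: enumerate $N = n+1, n+2, \dots$, and for each $N$ test whether (i) holds or whether (ii) holds; halt and output ``not globally admissible'' in the first case, ``globally admissible'' in the second.

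For the easy implications: if $a$ is \emph{not} globally admissible, I claim (i) holds for some $N$. Suppose not; then for every $N > n$ there is a locally admissible $b^{(N)} \in \mathcal{A}^{F_N}$ with $b^{(N)}|_{F_n} = a$. Since $\bigcup F_N = G$ and each $F_N$ is finite, a diagonal/compactness argument (König's lemma on the tree of locally admissible extensions of $a$, or just taking a subnet in $\mathcal{A}^G$) produces a point $x \in \mathcal{A}^G$ with $x|_{F_n} = a$ all of whose finite subwords are locally admissible; since $\mathcal{F} \subset \mathcal{A}^E$ is finite and every window of $x$ is locally admissible, no forbidden word appears in $x$, so $x \in X_{\mathcal{F}} = X$, hence $a \in L(X)$, a contradiction. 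Symmetrically, the ``only if'' direction of (a) and the ``if'' direction of (b): if (i) holds for some $N$ then certainly $a$ extends to no globally admissible $b \in \mathcal{A}^{F_N} \supset \mathcal{A}^{F_n}$, so $a \notin L(X)$; and if (ii) holds for some $N$, I want to conclude $a \in L(X)$ — this is where strong irreducibility enters.

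The main obstacle is the ``if'' direction of (b): from ``$a$ is compatible with every locally admissible $b \in \mathcal{A}^{F_N}$'' one must manufacture an actual point of $X$ containing $a$. The idea is to build an increasing sequence of globally admissible patterns on the $F_k$'s. Here one uses the recursive structure: having (ii) hold at level $N$ for $a$ lets us, given any globally admissible pattern on $F_{N-1}$ that restricts to $a$ — or rather, working outward — glue in a globally admissible ``annulus'' on $F_N \setminus F_{N-1}$ far from the core, using that $F_{n}E \subset F_{n+1}$ so the annuli are $E$-separated from the core and strong irreducibility permits the concatenation. More precisely, one shows: if $a \in \mathcal{A}^{F_n}$ satisfies (ii) for some $N$, then $a$ is globally admissible, by first noting any locally admissible $b$ on $F_N$ is globally admissible for $N$ large enough relative to the diameter of $\mathcal{F}$ (or reducing to that case), then using compatibility to find a locally admissible $c \in \mathcal{A}^{F_N}$ agreeing with $a$ on $F_n$, and finally showing $c \in L(X)$ by an SI gluing argument that tiles $G \setminus F_N$ with globally admissible blocks placed with $E$-separation. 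I would isolate the needed gluing statement as a sub-claim (``an $E$-separated union of globally admissible patterns is globally admissible'', which is immediate from SI) and then assemble the point $x \in X$ with $x|_{F_N} = c$, giving $a = c|_{F_n} \in L(X)$. The delicate bookkeeping is ensuring the separations line up with the nested F\o lner sets $F_n E \subset F_{n+1}$, but no estimate beyond the definition of SI is required.
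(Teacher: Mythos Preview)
You have misidentified where strong irreducibility enters and, as a consequence, left the crucial implication unproved.

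The direction you call ``the main obstacle'', namely ``(ii) for some $N$ $\Rightarrow$ $a$ globally admissible'', does \emph{not} require SI. The paper handles it by first showing that (ii) persists: if (ii) holds at level $N$ then it holds at every $M\geq N$ (given locally admissible $b\in\mathcal{A}^{F_M}$, apply (ii) to $b|_{F_N}$ to get a witness $c$, then $c\,b|_{F_M\setminus F_N}$ is locally admissible because the SFT window $E$ satisfies $F_{N-1}E\subset F_N$). From persistence one gets, for every $M\geq N$, a locally admissible $c_M$ on $F_M$ with $c_M|_{F_n}=a$, and compactness finishes. Your proposed route is broken at two points: the claim ``locally admissible on $F_N$ implies globally admissible for $N$ large'' is simply false for $\mathbb{Z}^d$ SFTs (this is precisely the source of the undecidability phenomena you are trying to circumvent), and SI only lets you glue \emph{globally} admissible patterns, so you cannot apply it to a $c$ that is only known to be locally admissible.

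More seriously, you never address the implication ``$a$ globally admissible $\Rightarrow$ (ii) holds for some $N$'', and this is exactly where SI is indispensable. The paper's argument runs as follows. Since $F_nE\subset F_{n+1}$ and $a\in L(X)$, SI gives that $a$ is compatible with every \emph{globally} admissible $b\in\mathcal{A}^{F_{n+2}}$. Hence any locally admissible $b\in\mathcal{A}^{F_{n+2}}$ \emph{not} compatible with $a$ must fail to be globally admissible; by part (a) applied to each such $b$ (finitely many), there is $N$ so large that no such $b$ extends to a locally admissible pattern on $F_N$. Now take any locally admissible $c\in\mathcal{A}^{F_N}$: its restriction $c|_{F_{n+2}}$ is locally admissible and does extend (to $c$), so it must be globally admissible, hence compatible with $a$ via some $b'$; the concatenation $d=b'\,c|_{F_N\setminus F_{n+2}}$ is then locally admissible (again using $F_{n+1}E\subset F_{n+2}$) and witnesses compatibility of $a$ with $c$. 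This is the step your proposal is missing entirely.
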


%\textcolor{green}{NOTE: I don't think I believe the ``sufficiently large $N$ is equivalent'' for (1) unless you assume each $F_N$ is a union of translates of the previous. But I don't think you need that fact anyway, do you? Also, I think you need $N > n$ in the above?}

\begin{proof} (a) First suppose $a$ is globally admissible. Then for each $N > n$, $a =  b|_{F_n}$ for some globally admissible, and therefore locally admissible, $b \in \mathcal{A}^{F_N}$. In particular, (i) fails to hold for all $N > n$. We now suppose that (i) fails to hold for all $N \geq n$. Thus for all $N > n$, there exists some locally admissible $b \in \mathcal{A}^{F_N}$ such that $a = b|_{F_n}$. By compactness it follows that $a$ is globally admissible.

(b) First we notice that if (ii) holds for $N$, then (ii) holds for all $M \geq N$. To show this, let $a \in \mathcal{A}^{F_n}$ be locally admissible and suppose (ii) holds for some $N > n$. We now let $M \geq N$ and let $b \in \mathcal{A}^{F_M}$ be locally admissible. Then $b|_{F_N}$ is locally admissible and by assumption we have some $c \in \mathcal{A}^{F_N}$ such that $c|_{F_n} = a$ and $c|_{F_N \backslash F_{N-1}} = b|_{F_N \backslash F_{N-1}}$.  Since by assumption for all $n$ $F_nE \subset F_{n+1}$, since $b$ was locally admissible, we know $c b|_{F_M \backslash F_N}$ is locally admissible, and thus (ii) holds for $M$.

We now suppose that for some $N > n$ and for all $b \in \mathcal{A}^{F_N}$, $a$ and $b$ are compatible. For each $M \geq N$ we therefore have some locally admissible $c_M \in \mathcal{A}^{F_M}$ such that $c_M|_{F_n} = a$ and by compactness it must be the case that $a$ is globally admissible.

We now suppose that $a$ is globally admissible. Since $E$ satisfies SI for $X$ and by assumption $F_n E \subset F_{n+1}$ for all $n \in \N$, we know for all globally admissible $b \in \mathcal{A}^{F_{n+2}}$, $a$ and $b$ are compatible. In particular, since $a$ and $b$ are globally admissible, we know that $a \in L(X)$ and $b|_{F_{n+2} \backslash F_{n+1}} \in L(X)$. Since by assumption $F_n E \subset F_{n+1}$, we have $F_n E \cap ( F_{n+2} \backslash F_{n+1}) = \emptyset$, and we therefore know $a b|_{F_{n+2} \backslash F_{n+1}} \in L(X)$. Thus by definition, $a$ and $b$ are compatible. 

It immediately follows that, for every locally admissible $b \in \mathcal{A}^{F_{n+2}}$ if $a$ and $b$ are not compatible, then $b$ is not globally admissible. Since there are finitely many $b \in \mathcal{A}^{F_{n+2}}$, by application of (a), there exists some $N \in \N$ such that for all locally admissible but not globally admissible $b \in \mathcal{A}^{F_{n+2}}$, $b$ is not compatible with any locally admissible $c \in \mathcal{A}^{F_N}$. We therefore have for all locally admissible $c \in \mathcal{A}^{F_N}$, there exists a globally admissible $b \in \mathcal{A}^{F_{n+2}}$ such that $b$ and $c$ are compatible.

We now let $c \in \mathcal{A}^{F_N}$ be locally admissible. We therefore have some globally admissible $b \in \mathcal{A}^{F_{n+2}}$ such that $b$ and $c$ are compatible. Let $c' \in \mathcal{A}^{F_N}$ witness compatibility of $b$ and $c$ and notice in particular that $c'|_{F_N \backslash F_{N-1}} = c|_{F_N \backslash F_{N-1}}$. To show that $a$ and $c$ are compatible, it is therefore sufficient to show that $a$ and $c'$ are compatible. Since $b$ is globally admissible, we know that $a$ and $b$ are compatible and we let $b' \in \mathcal{A}^{F_{n+2}}$ witness compatibility of $a$ and $b$. We now claim that $d = b' c'|_{F_N \backslash F_{n+2}}$ is locally admissible. 

First notice that to check if $d$ is locally admissible, since $E$ satisfies the SFT rules for $X$ it is sufficient to check that for any shift of $E$, $Eg$, $d|_{Eg} \notin \mathcal{F}$. Since $b'$ is locally admissible, for any $Eg \subset F_{n+2}$, $d|_{Eg} \notin \mathcal{F}$. Additionally, for any $Eg $ not contained in $F_{n+2}$, we know $d|_{Eg} = c'|_{Eg}$, which is locally admissible and in particular $d|_{Eg} \notin \mathcal{F}$. We have therefore identified a locally admissible $d \in \mathcal{A}^{F_N}$ such that $d|_{F_n} = a$ and $d|_{F_N \backslash F_{N-1}} = c|_{F_N \backslash F_{N-1}}$. By definition we therefore know $a$ and $c$ are compatible.

\end{proof}

Theorem \ref{decidable language} follows immediately by application of the following algorithm. For any $F \Subset G$, let $v \in \mathcal{A}^F$. First identify the smallest $n$ such that $F \subset F_n$. We therefore have finitely many locally admissible $a \in \mathcal{A}^{F_n}$ such that $v$ is a subword of $a$. For each such $a \in \mathcal{A}^{F_n}$ perform the following algorithm: For each $N \geq 1$, find the first $N$ such that (i) or (ii) holds in the lemma (noticing that for each $N$,  these conditions are finitely checkable). If (ii) holds then $a$ is globally admissible, otherwise if (i) holds it is not. It follows then that $v$ is globally admissible and if and only if there exists some globally admissible $a \in \mathcal{A}^{F_n}$ containing it, and we can therefore decide in finite time if $v \in L(X)$. 

Notice here that in the $\Z^d$ case, when the SI assumption is dropped not only does $X$ not necessarily have decidable language, as shown by Berger in \cite{berger}, it is not necessarily decidable whether or not a given finite $\mathcal{F}$ generates a non-empty SFT $X_\mathcal{F}$.

\subsection{Partition Functions, Pressure, and Ground States}

We can now define topological pressure. For a given subshift $X$, we call a continuous, real valued $\phi \in C(X)$ a {\bf potential}. Since every $\phi \in C(X)$ can be extended to a potential on $\mathcal{A}^G$, which has a suitable structure for computation, we will be considering the full set of potentials $C(\mathcal{A}^G)$, restricting to $X$ whenever necessary.

For any finite $F \Subset G$, we define the {\bf partition function} of $F$ to be: 
$$Z_F(\phi) = \sum_{w \in L_F(X)} exp \left( \sup_{x \in [w]} \sum_{g \in F} \phi ( \sigma_gx) \right). $$
The {\bf topological pressure} of a potential $\phi$ is then defined by 
$$P_X(\phi) = \lim_{n \rightarrow \infty} \frac{1}{|F_n|} \log Z_{F_n}(\phi). $$

(The existence of the limit is non-trivial; see for instance \cite{ollagnier} or \cite{ornsteinweiss} for more details.)
This paper will be primarily concerned with the computability of the function $P_X : C(\mathcal{A}^G) \rightarrow \R$. Notice here that $P_X(\phi)$ does not depend on choice of F\o lner sequence, and in fact as noted in Remark 2 of \cite{countable-thermoformalism}, pressure satisfies the {\bf infimum rule}: for any $\phi \in C(\mathcal{A}^G)$, 
$$P_X(\phi) = \inf_{F \Subset G, F \neq \varnothing} \frac{1}{|F|} \log Z_F(\phi). $$
The infimum rule will be essential to many of our computations later. We note that pressure for the zero potential, $P_X(0) = h(X)$ is the {\bf topological entropy} of the subshift. 

%\textcolor{blue}{We again note here that $C(X)$ is exactly the set of potentials $C(\mathcal{A}^G)$ restricted to $X$, and  each $\phi \in C(X)$ can be extended to a potential on $\mathcal{A}^G$. As such, we will be formally be considering  the pressure map $P_X: C(\mathcal{A}^G) \rightarrow \R$, where we always restrict $\phi$ to the subshift $X$. }

Pressure also has a representation as a supremum, called the Variational Principle. In particular, for any $\phi \in C(\mathcal{A}^G)$, 
$$P_X(\phi) = \sup_{\mu \in M_\sigma(X)} h(\mu) + \int \phi \ d\mu, $$
where $h(\mu)$ is the measure-theoretic (Kolmogorov-Sinai) entropy of $\mu$ (again, see \cite{walters} for more information). %textcolor{red}{For a formal definition of $h$, we refer the reader to {\bf XXX CITE}}. 
We say $\mu$ is an {\bf equilibrium state} for $\phi$ if it attains the supremum: $P_X(\phi) = h(\mu) + \int \phi d\mu$.  In the subshift setting $h$ is upper semi-continuous in the weak-$*$ topology (and $\int \phi \ d\mu$ is continuous by definition), guaranteeing the existence of at least one equilibrium state for any potential. 

% Outside of the subshift setting, in \cite{eq-state-computability}, Binder, He, Li, and Zhang showed that for a computable metric space $X$, a H\"older continuous potential $\phi$ with known exponent and constant, an open, topologically exact, expanding, continuous map $T: X \rightarrow X$, if $\phi$ and $T$ are both computable, then the unique equilibrium state $\mu$ for $\phi$ on $(X, T)$ is also computable. 

In addition to understanding the pressure of arbitrary potentials, it is also of interest to understand, for a fixed potential $\phi$,  the map $P_{X,\phi} : (0, \infty) \rightarrow \R$ where $P_{X,\phi}(\beta) := P_X(\beta \phi)$. In particular, $\beta$ has a physical interpretation as the multiplicative inverse of temperature. We call any weak-* limit point of equilibrium states for $\beta \phi$ for $\beta \rightarrow \infty$ a \textbf{ground state} for $\phi$, or a {\bf zero-temperature limit}. 

By weak-* compactness of the set of invariant measures, it is clear that the collection of ground states is nonempty. Further, for any ground state $\mu$, it can easily be shown that $\mu$ maximizes entropy among $\phi$-maximizing invariant measures. %In particular, we can define $Max_\phi = \{ \mu \in M_\sigma(X) : \int \phi d\mu = \sup_{\nu \in M_\sigma(X)} \int \phi d\nu \}$, and thus all ground states for a fixed potential $\phi$ are contained within the set  $$\{ \mu \in Max_\phi : h(\mu) = \sup_{\nu \in Max_\phi} h(\nu)\}. $$
It is therefore well-defined to refer to the {\bf ground state energy} of a potential $\phi$, denoting $\sup_{\nu \in M_\sigma(X)} \int \phi d\nu$. Additionally, we can unambiguously refer to the {\bf ground state entropy} (also referred to as {\bf residual entropy}) of a potential $\phi$, denoting the common value of entropy among ground states for $\phi$.

\section{$P_X$ is Computable for SI SFTs}

Our goal in this section is to prove Theorem~\ref{pressure is computable}, which we restate here for convenience.

\begin{theorem}\label{pressure is computable} Let $G$ be a finitely generated, amenable group with decidable word problem. Let $X \subset \mathcal{A}^G$ be an SI SFT. % with a known shape $E \Subset G$ satisfying strong irreducibility. 
Then the pressure function $P_X : C(\mathcal{A}^G) \rightarrow \R$ is computable. 
\end{theorem}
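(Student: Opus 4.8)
## Proof proposal

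The plan is to give an algorithm that, on input an oracle $\gamma$ for a potential $\phi \in C(\mathcal{A}^G)$ and a precision parameter $n$, outputs a rational within $2^{-n}$ of $P_X(\phi)$. The two ingredients we can leverage are: (1) Theorem~\ref{decidable language}, which lets us compute $L_F(X)$ exactly for any given $F \Subset G$; and (2) the infimum rule, $P_X(\phi) = \inf_{F} \frac{1}{|F|}\log Z_F(\phi)$, which automatically gives upper approximations to the pressure as we enumerate a F\o lner sequence via Lemma~\ref{computable folner}. So the real work is producing matching \emph{lower} bounds with an effective rate, since $\frac{1}{|F_k|}\log Z_{F_k}(\phi)$ converges to $P_X(\phi)$ from above but a priori with no controlled speed.

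The key steps, in order:
\begin{enumerate}
\item \textbf{Reduce to a locally constant rational potential.} Given the oracle $\gamma$, pick a locally constant rational $\psi = s_m$ with $\|\phi - \psi\|_\infty < 2^{-(n+2)}$; since $|P_X(\phi) - P_X(\psi)| \le \|\phi - \psi\|_\infty$ (a standard Lipschitz estimate for pressure, immediate from the definition of $Z_F$), it suffices to compute $P_X(\psi)$ to within $2^{-(n+2)}$. From now on $\psi$ depends only on coordinates in a fixed window $W \Subset G$.
\item \textbf{Compute upper bounds.} Using Lemma~\ref{computable folner} generate a F\o lner sequence $(F_k)$, and for each $k$ use decidability of the language to list $L_{F_k}(X)$ and evaluate $Z_{F_k}(\psi)$ exactly (for locally constant $\psi$ the $\sup_{x\in[w]}\sum_{g\in F_k}\phi(\sigma_g x)$ over a cylinder is a finite maximum that is computed exactly once $F_k$ is large enough to contain the relevant windows, with a harmless boundary correction of size $O(|\partial F_k|)$). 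This yields $a_k := \frac{1}{|F_k|}\log Z_{F_k}(\psi) \searrow P_X(\psi)$.
\item \textbf{Compute lower bounds via quasitilings.} This is the crux. Using the Downarowicz--Huczek--Zhang machinery (Fact, Theorem 4.3 of \cite{dhz}), for a prescribed invariance we can \emph{algorithmically} produce a finite set of shapes $\mathcal{S}$, each $(K,\epsilon)$-invariant, that exactly tile $G$. Strong irreducibility then lets us independently fill each tile with any legal pattern (after deleting a bounded collar of width governed by $E$), so a configuration on a large F\o lner set $F_k$ can be built by choosing, on the sub-collection of tiles landing inside $F_k$, an arbitrary locally admissible interior pattern. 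Counting these and weighting by $\psi$ gives a lower bound of the shape
\[
\tfrac{1}{|F_k|}\log Z_{F_k}(\psi) \ge \Big(\tfrac{1}{|S|}\log Z^{\,\mathrm{int}}_{S}(\psi)\Big) - (\text{error from collars, boundary tiles, and the }(K,\epsilon)\text{-defect}),
\]
where the error terms are all effectively bounded by a function of $\epsilon$, $|E|$, $|W|$ and $\|\psi\|_\infty$ that we control by our choice of $\epsilon$ and $K$. Combined with Step 2, $P_X(\psi)$ is squeezed between an explicit decreasing sequence and an explicit quantity differing from it by a controllable amount, so we can certify when we are within $2^{-(n+2)}$.
\item \textbf{Assemble.} Run Steps 2 and 3 in parallel, refining $k$ (and $\epsilon$, $K$), halt when the gap between the current upper bound and lower bound drops below $2^{-(n+2)}$, and output the midpoint. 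Adding back the $2^{-(n+2)}$ from Step 1 gives the claimed $2^{-n}$ accuracy.
\end{enumerate}

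The main obstacle is Step 3: converting strong irreducibility plus an exact (or $(1-\alpha)$-) tiling into a clean, \emph{effective} lower bound on $Z_{F_k}(\psi)$ with all error terms made explicit and summable in the F\o lner limit. One has to be careful that (a) the collar removed from each tile for SI independence does not eat a non-negligible fraction of the tile — this forces the shapes to be taken highly $(E,\epsilon)$-invariant, which \cite{dhz} guarantees; (b) tiles straddling the boundary of $F_k$ and the $(K,\epsilon)$-invariance defect contribute only an $o(|F_k|)$ number of coordinates, so their effect on $\frac{1}{|F_k|}\log Z$ vanishes; and (c) the $\psi$-weights on interiors of tiles genuinely add up, which is where local constancy of $\psi$ (Step 1) is used so that the weight of a pattern is essentially the sum of per-tile weights up to boundary terms. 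Once these bookkeeping estimates are in place, matching them against the upper bounds from the infimum rule closes the argument. I would also expect a short lemma reconciling the "interior partition function of a shape" with the true pressure — essentially that $\frac{1}{|S|}\log Z^{\mathrm{int}}_S(\psi) \to P_X(\psi)$ along $(E,\epsilon)$-invariant shapes — which follows from the same F\o lner/subadditivity reasoning that gives existence of the pressure limit.
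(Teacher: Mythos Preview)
Your proposal is correct and uses the same essential ingredients as the paper: reduction to a rational locally constant potential via the Lipschitz property of $P_X$, the infimum rule for upper bounds, decidability of the language (Theorem~\ref{decidable language}) to compute partition functions exactly, and a computably constructed DHZ tileset combined with strong irreducibility for lower bounds. The one organizational difference is that the paper avoids your parallel search in Step~4 by observing that both bounds come from a \emph{single} partition function: taking $S_i$ to be the shape minimizing $\frac{1}{|S|}\log Z_{S\setminus S^cE^{-1}}(\varphi)$ over the tileset, the infimum rule gives $\frac{1}{|S_i\setminus S_i^cE^{-1}|}\log Z_{S_i\setminus S_i^cE^{-1}}(\varphi)\ge P_X(\varphi)$, while the SI/tiling argument gives $P_X(\varphi)\ge \frac{1-\eta}{|S_i|}\log Z_{S_i\setminus S_i^cE^{-1}}(\varphi)$, and the gap between these is bounded a priori by $\bigl(\log|\mathcal{A}|+\|\varphi\|_\infty\bigr)\bigl(\tfrac{1}{1-\eta/2}-(1-\eta)\bigr)$. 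So once $\eta$ is chosen, one partition function computation suffices and no halting condition is needed. (The paper also normalizes $\varphi\ge 0$ by adding the constant $\|\varphi\|_\infty$; you will want this so that the product lower bound $Z_{F_n}(\varphi)\ge\prod_j Z_{S_j\setminus S_j^cE^{-1}}(\varphi)^{n_j}$ goes through cleanly.)
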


We will prove the theorem through a sequence of lemmas. First we say that $\phi \in C(X)$ is {\bf locally constant} for a shape $E \Subset G$ if for all $x, y \in X$ such that $x|_E = y|_E$, $\phi(x) = \phi(y)$. We now let $\phi \in C(\mathcal{A}^G)$ and let $\gamma$ be an oracle for $\phi$. In this context, we mean that $\gamma(n) \in C(\mathcal{A}^G)$ is a rational-valued locally constant potential defined on $E_n \Subset G$ such that $|| \gamma(n) - \phi||_\infty < 2^{-n}$ and where the sequence $(E_n)$ is computable. 

We will now construct the algorithm to approximate $P_X(\phi)$ from $\gamma$. Fix some rational $\epsilon > 0$ and let $\varphi = \gamma(n)$ such that $2^{-n} < \epsilon / 2$. It's easily checked that $P_X$ is Lipschitz with constant $1$, and so $|P_X(\phi) - P_X(\varphi)| < \epsilon / 2$. It is therefore sufficient to approximate $P_X(\varphi)$ to $\epsilon / 2$ precision. By potentially expanding $E$, we may assume without loss of generality that $\varphi$ is locally constant for $E$. Additionally, it is well known that for any $c \in \R$, $P_X(\varphi + c) = P_X(\varphi) + c$. Since $\varphi$ is locally constant and rational valued, we can compute $||\varphi||_\infty$ in finite time and by adding $c = ||\varphi||_\infty$ we may assume for the purposes of our computations that $\varphi \geq 0$. For the remainder of this proof we fix this $\varphi$ and $E$. 

Let $\eta > 0$ be a rational number satisfying: 
$$\frac{1}{1-\eta/2} - (1-\eta) < \frac{\epsilon}{2 \left( \log  | \mathcal{A} | + ||\varphi ||_\infty \right) }. $$
We will now follow the construction in \cite{dhz} to construct a collection of shapes $\mathcal{S}$ that $(1-\eta/2)$-tile $G$ and for which each shape is $(E, \eta/2)$-invariant.  (We note that the existence of such a collection already follows from results in 
\cite{ornsteinweiss}, but we need to be able to algorithmically construct it, which is more easily done via arguments from \cite{dhz}.)

\subsection{Construction of the tileset}

We begin by noting that the algorithm described in the proof of Lemma \ref{computable folner} provides us with a computable process to construct a F\o lner sequence $\{ F_n \}$ in $G$. We fix this F\o lner sequence and notice, for any $F \Subset G$ and any rational $\xi > 0$, we can identify some $N \in \N$ such that for all $n \geq N$ $F_n$ is $(F, \xi)$-invariant. The following lemma follows almost immediately from the proof of Lemma 4.1 in \cite{dhz}. 

\begin{lemma} There exists a Turing machine $T$ which, on input a finite $F \Subset G$, rational $\xi > 0$, and $n_0 \in \N$, outputs 
$r(\xi) \in \N$ and $n_0 < n_1 < \dots < n_r$ such that  $\{ F_{n_1}, \dots, F_{n_r} \}$ %\textcolor{green}{NOTE: is mismatch starting at $n_0$ on left and $n_1$ on right intentional?} 
is a tile set of $(F, \xi)$-invariant shapes inducing an $\xi$-disjoint, $(1-\xi)$-covering quasitiling of $G$. 
\end{lemma}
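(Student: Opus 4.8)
The plan is to follow the proof of Lemma~4.1 of \cite{dhz}, observing that the only computational content is (a) bounding the number of levels $r$ as an explicit function of $\xi$, and (b) locating, level by level, an index $n_i$ in the fixed F\o lner sequence $\{F_n\}$ for which $F_{n_i}$ is sufficiently invariant both with respect to $F$ and with respect to the previously chosen scale $F_{n_{i-1}}$. Crucially, the Turing machine is \emph{not} asked to output the (infinite) quasitiling itself, only the finitely many shapes and the number $r$; existence of a quasitiling induced by those shapes with the required properties is then guaranteed by \cite{dhz} as soon as the shapes are chosen sufficiently mutually invariant.

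Recall the mechanism behind that construction. Fix $\alpha = \xi$. Starting from the uncovered set $G$, one places at level $i$ a maximal family of translates $F_{n_i} g$ whose $\alpha$-cores are pairwise disjoint, disjoint from all cores placed at earlier levels, and each of which meets the previously covered region in at most an $\alpha$-fraction of $|F_{n_i}|$. The Ornstein--Weiss covering estimate, in the quantitative form used in \cite{dhz}, shows that, provided $F_{n_i}$ is $(F_{n_{i-1}}, \delta)$-invariant for a tolerance $\delta = \delta(\xi)$ depending only on $\xi$, the density of the still-uncovered set is multiplied by a factor $1-c$ with $c = c(\xi) > 0$ (one may take $c$ of order $\xi$). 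Iterating, after $r = \big\lceil \log \xi / \log(1-c) \big\rceil$ levels the uncovered density is below $\xi$, so the resulting collection $(1-\xi)$-covers $G$; it is $\xi$-disjoint by construction, since only $\alpha$-cores were required disjoint. This is exactly the argument of \cite{dhz}; the one new remark is that $r$, being given by this closed-form expression in $\xi$, is computable.

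For the algorithmic part, note that for any finite $K \Subset G$ and rational $\delta > 0$ it is decidable whether $F_m$ is $(K,\delta)$-invariant: using the decidable word problem one forms the finite sets $KF_m$ and $F_m$ as concrete lists of group elements, computes $|KF_m \triangle F_m|$ and $|F_m|$, and compares the ratio with $\delta$. Since $\{F_n\}$ is a F\o lner sequence, for every such $K$ and $\delta$ there is an $N$ with $F_n$ being $(K,\delta)$-invariant for all $n \geq N$, so a search over increasing $m$ terminates. The machine therefore: computes $r$ and the tolerances $\delta_1, \dots, \delta_r$ from $\xi$; sets $n_1$ to be the least index exceeding $n_0$ with $F_{n_1}$ simultaneously $(F, \xi)$-invariant and $(F, \delta_1)$-invariant; and for $i = 2, \dots, r$, given $n_{i-1}$, lets $n_i$ be the least index exceeding $n_{i-1}$ with $F_{n_i}$ simultaneously $(F, \xi)$-invariant and $(F_{n_{i-1}}, \delta_i)$-invariant. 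It outputs $r$ and $n_1 < \cdots < n_r$; by Lemma~4.1 of \cite{dhz} the tile set $\{F_{n_1}, \dots, F_{n_r}\}$ induces an $\xi$-disjoint, $(1-\xi)$-covering quasitiling of $G$.

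I expect the only real subtlety to be bookkeeping the invariance tolerances: in \cite{dhz} each new scale must be invariant not merely relative to the immediately preceding scale but relative to the whole accumulated structure, so the $\delta_i$ must be chosen shrinking appropriately with $i$ and $\xi$, and one must verify that the greedy placements lying "mostly outside the previous cover" actually exist. All of this is handled inside the proof of \cite{dhz}[Lemma~4.1] with explicit formulas, so no genuinely new estimate is required here; the contribution is purely the observation that every quantity involved (the level count $r$, the tolerances $\delta_i$, and the invariance tests) is computable, which rests on the decidability of the word problem for $G$.
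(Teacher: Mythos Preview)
Your proposal is correct and follows essentially the same approach as the paper: compute $r$ explicitly from $\xi$, fix computable tolerances $\delta_i$, then search the computable F\o lner sequence for sufficiently invariant sets, deferring the existence of the quasitiling to the proof of Lemma~4.1 in \cite{dhz}. The one place where your stated algorithm is slightly weaker than the paper's is your step for $i \geq 2$: you require $F_{n_i}$ to be $(F_{n_{i-1}}, \delta_i)$-invariant, whereas the paper (matching \cite{dhz}) requires $F_{n_i}$ to be $(F_{n_j}, \delta_j)$-invariant for \emph{all} $j < i$ --- but you explicitly flag this bookkeeping issue in your final paragraph, and it changes nothing about computability.
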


\begin{proof} As in \cite{dhz} we take $r \in \N$ such that $(1 - \frac{\xi}{2} )^r < \xi$. Since $\xi \in \Q$ this is clearly computable. Take $n_1 > n_0$ such that $F_{n_1}$ is $(F, \xi)$-invariant and $F \subset F_{n_1}$. Notice that since $G$ has decidable word problem, both of these conditions are checkable in finite time. For each $1 \leq i \leq r$ take $\delta_i$ sufficiently small such that 
$$\frac{1-(1-\frac{\xi}{2})^{r-i}}{(1+\delta_i)^2} - \frac{\delta_i}{1+\delta_i} + \xi \left( \frac{1}{1+\delta_i} - (1-\delta_i)\left(1-\left( 1-\frac{\xi}{2}\right)^{r-i} \right) \right) > 1 +\left(\frac{3 \xi}{4} - 1\right)  \left(1-\frac{3\xi}{8}\right)^{r-i} .  $$

We now inductively identify our $n_i$ for $1 < i \leq r$. Let $i > 1$ and take $n_i > n_j$ for all $1 \leq j < i$ such that $F_{n_i}$ is $(F_{n_j}, \delta_j)$-invariant for all $1 \leq j < i$. We therefore have a finite process for which we can identify our desired $n_0 < n_1 < \dots < n_r$ by application of the proof of Lemma 4.1 in \cite{dhz}. 
\end{proof}

Note in the above lemma we do not claim to compute the tiling, only that we can compute the shapes that will be used for such an $\xi$-disjoint, $(1-\xi)$-covering quasitiling.

\begin{lemma}\label{lemma33} (Lemma 3.3 \cite{dhz}) Let $F \Subset G$ be a finite set and $\xi > 0$. Then there exists a (computable from $\xi$) $\delta > 0$ such that if $T \subset G$ is $(F, \delta)$-invariant and $T'$ satisfies $\frac{|T' \triangle T|}{|T|} \leq \delta$, then $T'$ is $(F, \xi)$-invariant. 
\end{lemma}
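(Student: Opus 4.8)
The plan is to prove the lemma by a direct counting estimate: since the operation $S \mapsto FS$ does not expand symmetric differences by more than a factor $|F|$, a small perturbation of $T$ changes both $FT$ and $T$ only slightly, hence changes $|FT \triangle T|/|T|$ only slightly. Concretely, I would first record the elementary inequality $FS \triangle FS' \subseteq F(S \triangle S')$, which gives $|FS \triangle FS'| \le |F|\,|S \triangle S'|$; this follows by chasing an element $x = fs'$ (with $f \in F$, $s' \in S'$) through the definitions and observing that if $x \notin FS$ then necessarily $s' \notin S$, so $x \in F(S' \setminus S)$, and symmetrically.

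Next I would choose $\delta := \xi/(|F| + 2 + \xi)$, which is clearly computable from $\xi$ and $F$ and satisfies $0 < \delta < 1$. Assume $T$ is $(F,\delta)$-invariant, i.e. $|FT \triangle T| < \delta|T|$, and that $T'$ satisfies $|T' \triangle T| \le \delta|T|$. From $|T'| \ge |T| - |T \triangle T'|$ we get $|T'| \ge (1-\delta)|T| > 0$. For the numerator, applying the triangle inequality for symmetric difference twice,
$$|FT' \triangle T'| \;\le\; |FT' \triangle FT| + |FT \triangle T| + |T \triangle T'| \;<\; |F|\delta|T| + \delta|T| + \delta|T| \;=\; (|F|+2)\,\delta|T|,$$
using the Lipschitz bound on the first term and $(F,\delta)$-invariance on the second. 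Dividing by $|T'|$ and using the lower bound on $|T'|$,
$$\frac{|FT' \triangle T'|}{|T'|} \;<\; \frac{(|F|+2)\,\delta|T|}{(1-\delta)|T|} \;=\; \frac{(|F|+2)\delta}{1-\delta} \;=\; \xi,$$
so $T'$ is $(F,\xi)$-invariant, as required.

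There is no serious obstacle here; the statement is just a quantitative robustness property of $(F,\cdot)$-invariance. The only points requiring a little care are the verification of the containment $FS \triangle FS' \subseteq F(S \triangle S')$ and the bookkeeping of constants so that the final fraction comes out at most $\xi$; if one prefers a cleaner margin one can instead take $\delta = \xi/(2(|F|+2))$, which gives $\frac{(|F|+2)\delta}{1-\delta} \le \xi$ just as well. One should also implicitly assume $T$ (hence $T'$) is nonempty, as is standard when speaking of invariance, so that the denominators above are positive.
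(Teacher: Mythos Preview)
Your argument is correct and is essentially the same as the paper's: both use the triangle inequality for symmetric difference together with the Lipschitz bound $|FS \triangle FS'| \le |F|\,|S \triangle S'|$ and the lower bound $|T'| \ge (1-\delta)|T|$ to obtain $\frac{|FT' \triangle T'|}{|T'|} \le \frac{(|F|+2)\delta}{1-\delta}$, and then choose $\delta$ so that this is below $\xi$. Your explicit choice $\delta = \xi/(|F|+2+\xi)$ is just a concrete solution of the paper's condition $\frac{\delta}{1-\delta} < \frac{\xi}{|F|+2}$.
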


\begin{proof} 
First note that if $T'$ satisfies the given hypothesis, then $|T'| \geq |T| - |T' \triangle T| \geq |T| (1 - \delta)$. Now note that
$$\frac{|T' F \triangle T'|}{|T'|} \leq 
\frac{|TF \triangle T| + |T' \triangle T| + |T'F \triangle FT|}{(1 - \delta)|T|}
\leq \frac{|F| \delta + 2 \delta}{1-\delta}. $$
Take $\delta > 0 $ satisfying $\frac{\delta}{1-\delta} < \frac{ \xi}{|F|+2}$ and we can conclude the result. 
\end{proof}

As above, the following lemma follows from the proof techniques used in Lemma 4.2 of \cite{dhz}. 

\begin{lemma} There exists a Turing machine $T$ which, on input any finite $F \Subset G$ and rational $\xi > 0$, outputs a finite collection of shapes $\mathcal{S}$ such that each shape is $(F, \xi)$-invariant and there exists a $(1-\xi)$-tiling of $G$ induced by $\mathcal{S}$. 
\end{lemma}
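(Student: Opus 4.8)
The plan is to combine the previous two lemmas: the first produces computable shapes $\{F_{n_1},\dots,F_{n_r}\}$ inducing an $\xi$-disjoint, $(1-\xi)$-covering quasitiling, and Lemma~\ref{lemma33} lets us control invariance under small symmetric-difference perturbations. Following the argument of Lemma 4.2 in \cite{dhz}, the idea is that a $(1-\xi)$-\emph{tiling} (with genuinely disjoint tiles) can be extracted from an $\xi'$-disjoint $(1-\xi')$-covering quasitiling by replacing each tile $T$ with its ``core'' $T^\circ$ guaranteed by the definition of $\xi'$-disjointness. The cores are pairwise disjoint by construction, each satisfies $|T^\circ|/|T| > 1-\xi'$, and the collection of cores still $(1-\xi'')$-covers $G$ for a slightly larger $\xi''$ (since we only discarded an $\xi'$-fraction of an already $(1-\xi')$-covering family). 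Choosing $\xi'$ small enough in terms of $\xi$ makes $\xi'' < \xi$.

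First I would fix the target rational $\xi > 0$ and an arbitrary finite $F \Subset G$, and choose (computably) a small rational $\xi'>0$ that will control both the disjointness defect and the covering defect; I'd also invoke Lemma~\ref{lemma33} to get a rational $\delta>0$ such that $(F,\delta)$-invariance of a set is preserved under a $\delta$-proportion symmetric-difference perturbation, and take $\xi' < \delta$. Next I would feed $F$ and $\xi'$ (and $n_0 = 1$, say) into the Turing machine from the previous lemma to obtain shapes $\mathcal{S}_0 = \{F_{n_1},\dots,F_{n_r}\}$, each $(F,\xi')$-invariant, inducing an $\xi'$-disjoint $(1-\xi')$-covering quasitiling $\mathcal{T}_0$ of $G$. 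Then I would pass to the cores: for each $T \in \mathcal{T}_0$, replace $T$ by $T^\circ \subseteq T$ from the disjointness map; the resulting collection $\mathcal{T}$ is genuinely disjoint, and since $|T \setminus T^\circ| \le \xi' |T| < \delta |T|$, Lemma~\ref{lemma33} ensures each core shape is still $(F,\xi)$-invariant. Finally I would verify the covering bound: $\bigcup \mathcal{T}$ covers all of $\bigcup \mathcal{T}_0$ except a set of density at most $\xi'$ (relative to the ambient Følner sequence), so $\liminf |\bigcup\mathcal{T} \cap F_n|/|F_n| \ge (1-\xi') - \xi' \ge 1-\xi$, and I'd output the finite list of core shapes $\mathcal{S} = \{T^\circ : T \in \mathcal{T}_0\}$.

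One subtlety is that replacing a tile by its core changes its \emph{shape}, so the set of core shapes $\mathcal{S}$ must be exhibited explicitly as a finite list; since the disjointness map $T \mapsto T^\circ$ in the \cite{dhz} construction is itself explicit (the cores arise from an iterated greedy removal whose combinatorics are finitary), this causes no computability problem, but it does mean $\mathcal{S}$ is not simply $\{F_{n_1},\dots,F_{n_r}\}$ — one must track which shifted copies get trimmed and how. The main obstacle I anticipate is bookkeeping the covering-density estimate cleanly: one needs that discarding the cores' complements, which is a $\le \xi'$ fraction of each tile in a quasitiling that may overlap, still leaves a $(1-\xi)$-cover. This follows because $(1-\xi')$-covering is defined via $\liminf$ over Følner sequences and the removed part has upper density at most $\xi'$ within the covered region, but making the inequality chain precise (and confirming it is uniform over Følner sequences, as the definition of $(1-\alpha)$-covering demands) is the step requiring the most care. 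Everything else reduces to invoking the already-established lemmas and the decidability of the word problem, which guarantees all the finite checks (invariance ratios, subset tests, core computations) terminate.
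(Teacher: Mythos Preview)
Your overall strategy matches the paper's, but there is a genuine gap in the computability step. The previous lemma only outputs the \emph{shapes} $\{F_{n_1},\dots,F_{n_r}\}$; as the paper explicitly notes, it does \emph{not} compute the quasitiling $\mathcal{T}_0$ itself. Your proposed output $\mathcal{S}=\{T^\circ:T\in\mathcal{T}_0\}$ therefore cannot be produced: you have no algorithmic access to the tiles $T\in\mathcal{T}_0$, and hence none to their cores. Your remark that the core map is ``explicit'' and ``finitary'' does not help, because the combinatorics of which subset $T^\circ\subset T$ gets selected depends on how $T$ overlaps its neighbors in the infinite tiling, and different translates of the same $F_{n_i}$ may receive different trims.

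The paper's fix is simple but essential: rather than trying to determine which cores actually occur, it outputs \emph{all} candidate cores, namely
\[
\mathcal{S}=\bigcup_{i=1}^r\bigl\{F^\circ\subset F_{n_i}:|F^\circ|\ge(1-\lambda)|F_{n_i}|\bigr\},
\]
which is a finite, trivially computable family. Every actual core shape lies in this family, so $\mathcal{S}$ certainly induces the desired $(1-\xi)$-tiling, and Lemma~\ref{lemma33} guarantees every member of $\mathcal{S}$ is $(F,\xi)$-invariant. Your covering estimate (essentially $(1-\xi')^2\ge 1-\xi$) and your use of Lemma~\ref{lemma33} are both correct; the only missing idea is this over-generation trick that decouples the output from the uncomputed tiling.
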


\begin{proof} Let $\lambda > 0$ be a rational number such that $(1-\lambda)^2 > 1-\xi$ and $\frac{\lambda}{1-\lambda} < \frac{ \xi}{|F|+2}$. We now take $r = r(\lambda)$ as defined in Lemma 4.1 \cite{dhz}; in particular we let $r \in \N$ satisfy $(1-\frac{\xi}{2})^r < \xi$, which is a computable process. 

By construction of our F\o lner sequence, we can computably identify $n_0 \in \N$ such that for all $n \geq n_0$, $F_n$ is $(F, \lambda)$-invariant. We apply the computable process found in Lemma 4.3 of \cite{dhz} to get our starting collection of shapes $\{ F_{n_1}, \dots, F_{n_r} \}$. We note to the reader that in \cite{dhz}, these shapes induce a quasitiling of $G$ (one that exhausts $G$ but contains overlaps of the shapes). We may then remove the overlapping sections by cutting out at most a $(1-\lambda)$ proportion of them to form a $(1-\xi)$-tiling of $G$. For our purposes it is sufficient to examine only the shapes, of which there are only finitely many candidates. We now define the collection of possible shapes needed to $(1-\xi)$-tile $G$: 
$$\mathcal{S} = \bigcup_{i=1}^r \{ F^\circ \subset F_{n_i} : |F^\circ | \geq (1-\lambda) |F_{n_i}| \}. $$
First notice since we have a finite collection of finite sets, this is computable. Additionally, by choice of $\lambda$, since $(1-\lambda)^2 > 1-\xi$, there is a subset of shapes in $\mathcal{S}$ inducing a $(1-\xi)$-tiling of $G$. Finally, since $\lambda$ was taken sufficiently small with respect to Lemma \ref{lemma33} (Lemma 3.3 in \cite{dhz}), we know each $F^\circ \in \mathcal{S}$ is $(F, \xi)$-invariant. 

Since all steps involved in constructing $\mathcal{S}$ from $F$ and $\xi$ were computable in finite time, we can conclude the desired results. 
\end{proof}

We now apply the previous lemma to $F = E$ and $\xi = \eta / 2$ to arrive at our desired collection of shapes $\mathcal{S}$ inducing a $(1-\eta/2)$-tiling of $G$ and for which each shape is $(E, \eta/2)$-invariant.

\subsection{Approximating Pressure}\label{secapprox}

Note that by Theorem \ref{decidable language}, $X$ has decidable language, and since $\varphi$ is a locally constant, rational valued potential, for each $S_i \in \mathcal{S}$, $\frac{1}{|S_i|} \log Z_{S_i \backslash E^{-1} S_i^c}(\varphi)$ is computable. We may therefore compute the minimum. Fix $i \in \N$ such that $S_i$ attains this minimum, i.e.,
$$\frac{1}{|S_i|} \log Z_{S_i \backslash E^{-1} S_i^c}(\varphi) = \min \left\{\frac{1}{|S|} \log Z_{T \backslash E^{-1} S^c}(\varphi) : S \in \mathcal{S} \right\}. $$

\begin{lemma}\label{lem17} For our chosen $i$, we have: 
$$\frac{1}{|S_i \backslash S_i^cE^{-1}|} \log Z_{S_i \backslash S_i^cE^{-1}}(\varphi) \geq P_X(\varphi) \geq \frac{1-\eta}{|S_i|} \log Z_{S_i \backslash S_i^c E^{-1}}(\varphi). $$
\end{lemma}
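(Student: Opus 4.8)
The plan is to prove the two inequalities in Lemma~\ref{lem17} separately, using the infimum rule for pressure on one side and a tiling-concatenation argument on the other.

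For the left inequality, $\frac{1}{|S_i \backslash S_i^cE^{-1}|} \log Z_{S_i \backslash S_i^cE^{-1}}(\varphi) \geq P_X(\varphi)$, I would invoke the infimum rule stated in the preliminaries: $P_X(\varphi) = \inf_{F \Subset G, F \neq \varnothing} \frac{1}{|F|}\log Z_F(\varphi)$. Applying this with the particular set $F = S_i \backslash S_i^c E^{-1}$ immediately gives the bound, since the infimum over all nonempty finite sets is at most the value at any one of them. (One should check $S_i \backslash S_i^c E^{-1}$ is nonempty, which follows from $S_i$ being $(E,\eta/2)$-invariant with $\eta$ small, since that set is exactly the ``$E^{-1}$-interior'' of $S_i$ and has size at least $(1-\eta/2)|S_i| > 0$.)

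For the right inequality, $P_X(\varphi) \geq \frac{1-\eta}{|S_i|}\log Z_{S_i \backslash S_i^c E^{-1}}(\varphi)$, I would argue via the $(1-\eta/2)$-tiling induced by $\mathcal{S}$ together with strong irreducibility. The idea: take a large F\o lner set $F_n$; the tiling covers at least a $(1-\eta/2)$-fraction of $F_n$ by translates of shapes from $\mathcal{S}$, and the tiles are disjoint. On the ``interior part'' $Sg \backslash S^c E^{-1} g$ of each placed tile $Sg$, I can independently choose any word from $L_{S \backslash S^c E^{-1}}(X)$: strong irreducibility (with constant $E$) guarantees that words placed on these $E$-separated interiors can be simultaneously completed to a legal configuration on all of $F_n$, because the interiors $Sg \backslash S^c E^{-1}g$ are arranged so their $E$-neighborhoods stay within their own tiles and hence don't interfere. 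This yields a lower bound on $Z_{F_n}(\varphi)$ of the form (product over placed tiles of $Z_{S\backslash S^c E^{-1}}(\varphi)$) times an error term controlling the uncovered cells and the non-negativity of $\varphi$ (this is where $\varphi \geq 0$ and the bound $\log|\mathcal{A}| + \|\varphi\|_\infty$ on per-site contributions enter). Since $S_i$ was chosen to minimize $\frac{1}{|S|}\log Z_{S \backslash S^c E^{-1}}(\varphi)$, each tile contributes at least $\exp\big(|S_i|^{-1}\log Z_{S_i \backslash S_i^c E^{-1}}(\varphi) \cdot |S|\big)$ per unit volume it occupies, so the total covered volume $\geq (1-\eta/2)|F_n|$ (up to F\o lner boundary corrections) gives $\frac{1}{|F_n|}\log Z_{F_n}(\varphi) \geq (1-\eta)\frac{1}{|S_i|}\log Z_{S_i \backslash S_i^c E^{-1}}(\varphi)$ after absorbing the $\eta/2$ slack and the choice of $\eta$; taking $n \to \infty$ yields the claim.

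The main obstacle is the right inequality's bookkeeping: one must carefully track three sources of loss — the cells of $F_n$ not covered by the tiling (at most $\eta/2$ proportion), the ``collar'' cells $S^c E^{-1} g$ discarded from each tile to get $E$-separation (controlled by $(E,\eta/2)$-invariance of each shape), and the F\o lner boundary effects of $F_n$ itself — and show that together with $\varphi \geq 0$ these degrade the exponent by at most a factor $(1-\eta)$ rather than $(1-\eta/2)$; this is precisely what the defining inequality for $\eta$, namely $\frac{1}{1-\eta/2} - (1-\eta) < \frac{\epsilon}{2(\log|\mathcal{A}| + \|\varphi\|_\infty)}$, is engineered to handle. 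I expect the formal statement needs the observation that, on the uncovered and collar cells, $Z$ loses at most a factor $\exp\big((\log|\mathcal{A}| + \|\varphi\|_\infty)\cdot(\text{number of such cells})\big)$, which is why those cells must be a small proportion, and why non-negativity of $\varphi$ matters (so that throwing away a tile's contribution only helps, never hurts, the lower bound).
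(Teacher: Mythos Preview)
Your approach is essentially the same as the paper's: the left inequality is exactly the infimum rule applied to $S_i \setminus S_i^c E^{-1}$, and the right inequality is the tiling-plus-SI concatenation bound you describe. One clarification on the bookkeeping: in the paper, the factor $(1-\eta)$ in the lower bound comes \emph{only} from coverage --- a $(1-\eta/2)$-tiling of $G$ yields, for large enough $n$, disjoint tile translates covering at least $(1-\eta)$ of $F_n$ (the extra $\eta/2$ absorbs the F\o lner boundary effects), and since $\varphi \geq 0$ one simply drops the uncovered cells to get $Z_{F_n}(\varphi) \geq \prod_j Z_{S_j\setminus S_j^c E^{-1}}(\varphi)^{n_j}$; minimality of $i$ and $\sum_j n_j|S_j| \geq (1-\eta)|F_n|$ then finish it. The collar cells and the $(E,\eta/2)$-invariance of the shapes are \emph{not} absorbed into the $(1-\eta)$ factor; they are what make the two sides of the lemma have different denominators ($|S_i \setminus S_i^c E^{-1}|$ versus $|S_i|$), and the defining inequality for $\eta$ is used only \emph{after} the lemma, in the paper's subsequent estimate comparing those two bounds.
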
 

\begin{proof} Notice that since there exists a $(1-\eta/2)$-tiling of $G$ induced by $\mathcal{S}$, by definition for sufficiently large $n$, there exist finitely many pairwise disjoint translates of shapes from $\mathcal{S}$ whose union covers proportion $1 - \eta$ of $F_n$. We now examine $F_n$ for such $n$. 

For each $S_j \in \mathcal{S}$, let $n_j$ represent the number of translates of $S_j$ present in such an almost covering of $F_n$. Notice that since $E$ satisfies strong irreducibility for $X$, by definition of the partition function, and since $\varphi \geq 0$, we have 
$$Z_{F_n}(\varphi) \geq \prod_{S_j \in \mathcal{S}}  Z_{S_j \backslash S_j^c E^{-1}}(\varphi)^{n_j}. $$
By our choice of $i$ we have 
$$\frac{1}{|F_n|} \log Z_{F_n}(\varphi) \geq \frac{1-\eta}{|S_i|} \log  Z_{S_i \backslash S_i^c E^{-1}}(\varphi). $$
By taking limits as $n \rightarrow \infty$ combined with the infimum rule, we can conclude the desired result
$$\frac{1}{|S_i \backslash S_i^c E^{-1}|} \log Z_{S_i \backslash S_i^cE^{-1}}(\varphi) \geq P_X(\varphi) \geq \frac{1-\eta}{|S_i|} \log Z_{S_i \backslash S_i^cE^{-1} }(\varphi). $$
\end{proof}

%\begin{obs}\label{partition-estimate} Notice by choice of $\eta$, 
%$$|P_X(\varphi) - \frac{1}{|T_i \backslash E^{-1}T_i^c|} \log Z_{T_i \backslash E^{-1} T_i^c}(\varphi)| < \frac{\epsilon}{2}. $$
%\end{obs}

We now note that since $S_i$ is $(1-\eta/2)$ invariant, $(1-\eta/2) < \frac{|S_i \backslash  S_i^c E^{-1} |}{|S_i|} \leq 1$. We now observe: 
$$\left| \frac{1}{|S_i \backslash S_i^cE^{-1} |} \log Z_{S_i \backslash S_i^cE^{-1} }(\phi) - \frac{1-\eta}{|S_i|} \log Z_{S_i \backslash S_i^cE^{-1} }(\phi) \right|  $$
$$= \left| \frac{1}{| S_i |} \log Z_{S_i \backslash  S_i^c E^{-1}}(\phi) \right| \cdot \left|   \frac{|S_i|}{|S_i \backslash S_i^c E^{-1}|}  - (1-\eta) \right| . $$
Since $|S_i| \geq |S_i \backslash  S_i^c E^{-1}|$ we know by the definition of the partition function 
$$\left| \frac{1}{| S_i |} \log Z_{S_i \backslash  S_i^c E^{-1}}(\phi) \right| \leq \left| \frac{1}{| S_i \backslash S_i^c E^{-1} |} \log Z_{S_i \backslash  S_i^c E^{-1}}(\phi) \right| \leq \log |\mathcal{A}| + ||\phi||_\infty. $$
Additionally since $(1-\eta/2) < \frac{|S_i \backslash S_i^c E^{-1}|}{|S_i|} \leq 1$, we know 
$$\left|   \frac{|S_i|}{|S_i \backslash S_i^c E^{-1}|}  - (1-\eta) \right|  \leq \left| \frac{1}{1-\eta/2} - (1-\eta) \right|. $$
We therefore arrive at the bound 
\[
\left| \frac{1}{|S_i \backslash S_i^c E^{-1}|} \log Z_{S_i \backslash  S_i^c E^{-1} }(\phi) - \frac{1-\eta}{|S_i|} \log Z_{S_i \backslash  S_i^c E^{-1}}(\phi) \right| \leq  \left( |\mathcal{A}| + ||\phi||_\infty \right) \cdot \left| \frac{1}{1-\eta/2} - (1-\eta) \right|.
\]

By our choice of $\eta$, we know this is bounded above by $\epsilon / 2$, and so by Lemma~\ref{lem17},

\begin{equation}\label{partition-estimate}
|P_X(\varphi) - \frac{1}{|S_i \backslash S_i^c E^{-1}|} \log Z_{S_i \backslash  S_i^c E^{-1}}(\varphi)| < \frac{\epsilon}{2}.
\end{equation}

We therefore have 
$$\left| P_X(\phi) - \frac{1}{|S_i \backslash S_i^c E^{-1} |} \log Z_{S_i \backslash  S_i^c E^{-1} }(\varphi) \right| $$
$$\leq \left| P_X(\phi) - P_X(\varphi) \right| + \left| P_X(\varphi) - \frac{1}{|S_i \backslash S_i^c E^{-1} |} \log Z_{S_i \backslash  S_i^c E^{-1}}(\varphi) \right| < \epsilon.$$

Since the approximation $\frac{1}{|S_i \backslash S_i^c E^{-1}|} \log Z_{S_i \backslash  S_i^c E^{-1}}(\varphi)$ is algorithmically computable as noted above, $P_X : C(\mathcal{A}^G) \rightarrow \R$ is computable, completing the proof of Theorem~\ref{pressure is computable}.

\begin{obs} In this proof, we assumed decidable word problem in order to identify shapes, as well as compute supremums over the locally constant potentials. If $G$ did not have decidable word problem, then the same conclusions should hold if one is assumed to have an oracle for the word problem. 
\end{obs}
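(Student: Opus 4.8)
The plan is to observe that the proof of Theorem~\ref{pressure is computable}, together with the auxiliary results it rests on (Lemma~\ref{computable folner}, the two tileset-construction lemmas, and Theorem~\ref{decidable language}), is a \emph{relativizing} argument: the decidable word problem hypothesis is invoked only to let finitely many bounded subroutines decide equalities between elements of $G$, and such a subroutine, when we instead supply it with an oracle $W$ for the word problem (the set of finite strings of generators that represent $e$), still halts after finitely many oracle queries and returns the same answer. The underlying representation of elements of $G$ is as words in the fixed generating set; multiplication is concatenation, inversion is reversal-and-inversion of generators, and forming, copying, and manipulating finite lists of such words are all computable with no oracle whatsoever. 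The \emph{only} primitive that genuinely uses the hypothesis is ``do these two words represent the same element of $G$?'', which is the single query ``is $w_1 w_2^{-1} \in W$?''.

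First I would go through the uses one at a time. In Lemma~\ref{computable folner}, enumerate the finite subsets of $G$ (with repetitions, which is harmless) as finite lists of generator-words using no oracle; the two bullet conditions tested on each candidate $E_i$ --- the proper containments $T(n-1),\bigcup_{k<n}E_k \subsetneq E_i$ and the invariance inequalities $|E_k E_i \triangle E_i|/|E_i| < 1/n$ --- each amount to a finite Boolean combination of ``$w \in W$?'' queries (to deduplicate elements, decide containment, and count cardinalities of unions and symmetric differences). Relativizing, one obtains an oracle machine $T^W$ producing a F\o lner sequence. The lemmas building the $\xi$-disjoint $(1-\xi)$-covering quasitiling and the shape collection $\mathcal{S}$ only further test $(F,\xi)$-invariance and containment of finite subsets of $G$, hence finitely many ``$w \in W$?'' queries apiece, and so relativize verbatim. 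Likewise the proof of Theorem~\ref{decidable language}: checking local admissibility, compatibility of patterns, and conditions (i)/(ii) reduces to deciding whether certain shifts $Eg$ lie inside certain $F_N$ and to comparing restrictions of patterns --- again finite Boolean combinations of triviality queries --- so $X$ has language decidable relative to $W$. Finally, in the approximation step of Section~\ref{secapprox}, computing $Z_F(\varphi)$ for the relevant shapes $F$ uses the (now relativized) language decider to enumerate $L_F(X)$, and for each legal $w$ computes $\sup_{x \in [w]} \sum_{g \in F} \varphi(\sigma_g x)$; since $\varphi$ is locally constant for $E$ this sum depends only on the coordinates in $\bigcup_{g \in F} Eg$, and identifying those coordinates, matching them against $E$ to read the tabulated rational values of $\varphi$, and ranging the supremum over the finitely many globally admissible completions are, once more, finitely many oracle queries plus oracle-free arithmetic.

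Stitching these relativized subroutines together exactly as in the unrelativized proof, the machine of Theorem~\ref{pressure is computable} becomes an oracle machine that, on input an oracle $\gamma$ for $\phi$ and $n \in \N$, outputs a rational within $2^{-n}$ of $P_X(\phi)$ using only the word-problem oracle $W$; that is, $P_X$ is computable relative to $W$. I do not expect a genuine obstacle --- the argument is routine ``relativization'' --- so the one thing that needs care is bookkeeping: one must confirm that at no point does the algorithm decide a property of $G$ that is \emph{not} a finite Boolean combination of triviality queries (for instance, that the unbounded searches in Lemma~\ref{computable folner} and in the tileset lemmas terminate for the same reason as before --- amenability guarantees a witness exists and each candidate is checked with finitely many queries --- rather than for a reason that secretly consumed the full decidability of the word problem). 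Inspecting the cited proofs shows every such search halts after testing finitely many candidates, each with finitely many queries, so there is nothing to repair; the same remark applies \emph{mutatis mutandis} to Theorem~\ref{pressure is computable from above} and to the corollaries in Section~5.
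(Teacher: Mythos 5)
Your proposal is correct and matches the paper's intent: the paper states this as an unproved observation, and the justification it has in mind is precisely the routine relativization you carry out --- every use of the decidable word problem (enumerating finite subsets, testing invariance and containment, deciding local admissibility and compatibility, evaluating locally constant potentials) reduces to finitely many queries of the form ``does this word represent $e$?'', which an oracle for the word problem answers equally well. Your bookkeeping check that the unbounded searches still terminate because amenability supplies a witness is the right point to verify, and nothing further is needed.
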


\section{$P_X$ is Computable from Above Given a Forbidden List}

In this section we will show that even when our subshift $X$ is not SI or an SFT, pressure is still computable from above given an enumerated forbidden list inducing $X$. We again let $G$ be a finitely generated, amenable group with decidable word problem, and we fix a F\o lner sequence $\{ F_n \}$. Let $X \subset \mathcal{A}^G$ be any subshift and let $\mathcal{F} = \{ w_1, w_2, \dots \}$ be an enumeration of a forbidden list for $X$. We additionally denote $\mathcal{F}_n = \{ w_1 , \dots w_n \}$ to be the first $n$ forbidden words from our forbidden list. 

We remind the reader that for any finite $F \Subset G$ and $v \in \mathcal{A}^F$, we say $v$ is locally admissible (with respect to $\mathcal{F}$) if for every $w \in \mathcal{F}$, $v$ is not a subword of $w$. We define $LA_F(\mathcal{F})$ to be the set of all locally admissible $F$-words in $X$ according to $\mathcal{F}$. Notice here that $LA_F(\mathcal{F}) = \bigcap_{n \in \N} LA_F(\mathcal{F}_n )$ and since this collection is finite, for each $F \Subset G$ we have some $N \in \N$ such that $LA_F(\mathcal{F}) = LA_F(\mathcal{F}_N)$. 

The proofs below will rely on an adapted concept of $t$-extendable words used in \cite{rosenfeld-t-extend} by Rosenfeld. We first fix a computable enumeration of $G = \{ g_i : i \in \N \}$ such that $g_1 = e$. For each $t \geq 1$, define $G_t = \{ g_1, \dots, g_t \}$. 

\begin{defn} For all $F \Subset G$, $t, n \geq 1$, define $E_{t, n}^{(F)} \subset \mathcal{A}^F$ as follows. $w \in E_{t, n}^{(F)}$ if and only if the following conditions hold: 
\begin{itemize}
\item $w \in LA_F(\mathcal{F}_n)$, and 
\item there exists $v \in LA_{F G_t}(\mathcal{F}_n)$ such that $v|_F = w$. 
\end{itemize}
\end{defn}

\begin{lemma} For any $F \Subset G$, $L_F(X) = \bigcap_{t, n \in \N} E_{t, n}^{(F)} = E_{T, N}^{(F)}$ for some $T, N \in \N$. 
\end{lemma}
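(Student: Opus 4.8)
The plan is to show the two equalities separately, using compactness for the first and finiteness for the second. I would first establish $L_F(X) = \bigcap_{t,n \in \N} E_{t,n}^{(F)}$. For the inclusion $\subseteq$: if $w \in L_F(X)$, pick $x \in X$ with $x|_F = w$; then for every $t,n$ the restriction $v = x|_{FG_t}$ lies in $LA_{FG_t}(\mathcal{F}_n)$ (since $x$ contains no forbidden subword at all, hence none from $\mathcal{F}_n$) and satisfies $v|_F = w$, and likewise $w \in LA_F(\mathcal{F}_n)$, so $w \in E_{t,n}^{(F)}$. For the reverse inclusion $\supseteq$: suppose $w \in E_{t,n}^{(F)}$ for all $t,n$. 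Since $\{FG_t\}_t$ is an increasing exhaustion of $G$ (as $g_1 = e$ forces $F \subset FG_t$ and $\bigcup_t FG_t = FG = G$), a diagonal/compactness argument produces a configuration: for each $t$, choose $v_t \in LA_{FG_t}(\mathcal{F})$ extending $w$ — here I use that $LA_{FG_t}(\mathcal{F}) = LA_{FG_t}(\mathcal{F}_{N(t)})$ for some $N(t)$, so membership in $E_{t,N(t)}^{(F)}$ gives such a $v_t$ — and then by a König's-lemma / sequential-compactness argument on $\mathcal{A}^G$ extract a point $x \in \mathcal{A}^G$ whose restriction to each $FG_t$ agrees with (a subsequence of) the $v_t$'s; this $x$ contains no forbidden word, since any forbidden word sits inside some $FG_t$, so $x \in X_{\mathcal{F}} = X$ and $x|_F = w$, giving $w \in L_F(X)$.

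For the second equality, that the intersection stabilizes to $E_{T,N}^{(F)}$ for some finite $T,N$: I would argue monotonicity in each index. Fixing $F$, note $E_{t,n}^{(F)} \subseteq \mathcal{A}^F$ is a finite set, and it is decreasing in $n$ (more forbidden words can only remove words, both in the local-admissibility condition on $w$ and in the existence condition on $v$) and decreasing in $t$ (a larger $G_t$ imposes a stronger extension requirement, since any witness $v$ for $FG_{t'}$ with $t' > t$ restricts to a witness for $FG_t$). Hence $\{E_{t,n}^{(F)}\}$ is a decreasing net of subsets of a fixed finite set, so it is eventually constant; concretely, since $E_{t,n}^{(F)} \supseteq E_{t',n'}^{(F)}$ whenever $t \le t'$, $n \le n'$, and the values lie in the finite lattice of subsets of $\mathcal{A}^F$, there is a pair $(T,N)$ with $E_{T,N}^{(F)} = E_{t,n}^{(F)}$ for all $t \ge T$, $n \ge N$, and this common value equals the intersection.

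One technical point to be careful about: the monotonicity in $t$ requires checking that if $w$ is extendable to a locally admissible word on $FG_{t'}$ then it is extendable to one on $FG_t \subseteq FG_{t'}$, which is immediate by restriction, and conversely that shrinking $t$ cannot fail — so $E_{t,n}^{(F)}$ is genuinely antitone in $t$; combined with antitonicity in $n$ this makes the double intersection well-behaved. The main obstacle, such as it is, is the compactness argument in the $\supseteq$ direction of the first equality: one must assemble a single point of $X$ from the family of finite locally admissible witnesses, taking care that the witnesses on larger sets can be chosen compatibly (which is where passing to a convergent subsequence in the compact space $\mathcal{A}^G$, rather than naively gluing, is needed), and that the resulting point avoids \emph{all} of $\mathcal{F}$, not just finitely many forbidden words — this follows because each $w_i \in \mathcal{F}$ has finite support contained in some $g_j^{-1}(\cdots)$, hence is checked on some $FG_t$.
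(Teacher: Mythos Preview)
Your proposal is correct and follows essentially the same approach as the paper: the easy inclusion $L_F(X) \subseteq E_{t,n}^{(F)}$, a compactness argument for the reverse inclusion, and finiteness of $\mathcal{A}^F$ for stabilization of the intersection. If anything, you give more detail than the paper, which simply asserts the first inclusion as ``clear,'' dispatches the stabilization in one sentence, and invokes ``a compactness argument'' without elaboration; your explicit verification of monotonicity in both $t$ and $n$ is a point the paper leaves implicit but which is indeed needed to conclude that the intersection is realized by a single $E_{T,N}^{(F)}$.
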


\begin{proof} Clearly for all $T, N \in \N$, $L_F(X) \subset \bigcap_{t, n \in \N} E_{t, n}^{(F)} \subset E_{T, N}^{(F)} $. Since each $E_{t, n}^{(F)}$ is finite, the intersection must stabilize and we therefore have $\bigcap_{t, n \in \N} E_{t, n}^{(F)} = E_{T, N}^{(F)} $ for some $T, N \in \N$. Finally, by a compactness argument it follows that $\bigcap_{t, n \in \N} E_{t, n}^{(F)} = L_F(X)$. 
\end{proof}

\begin{lemma}\label{modified partition lemma} Given a rational-valued $\phi \in C(\mathcal{A}^G)$ which is locally constant for $E \Subset G$ containing $e$,
$$Z_F(\phi) = \inf_{t, n \in \N} \hat{Z}^{(t, n)}_F(\phi)$$
where 
$$\hat{Z}^{(t, n)}_F(\phi) = \sum_{w \in E_{t, n}^{(F)}} \sup \left\{  exp \left( \sum_{g \in F} \phi ( \sigma_g ( v) ) \right) : v \in E_{t, n}^{(EF)} \text{ s.t. } v|_F = w \right\} . $$
\end{lemma}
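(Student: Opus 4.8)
The plan is to show the two inequalities $Z_F(\phi) \le \hat{Z}^{(t,n)}_F(\phi)$ for all $t,n$, and $Z_F(\phi) \ge \inf_{t,n} \hat{Z}^{(t,n)}_F(\phi)$, and then combine them. The main point to keep in mind throughout is that since $\phi$ is locally constant for $E$ (with $e \in E$), for any $x \in \mathcal{A}^G$ and any $g \in F$ the value $\phi(\sigma_g x)$ depends only on $(\sigma_g x)|_E = x|_{Eg}$, hence the quantity $\sum_{g \in F} \phi(\sigma_g x)$ depends only on $x|_{EF}$. This is exactly why the relevant window in the definition of $\hat Z$ is $EF$ rather than all of $G$: the supremum $\sup_{x \in [w]} \sum_{g \in F}\phi(\sigma_g x)$ in $Z_F(\phi)$ is really a maximum over the finitely many ways to fill in coordinates in $EF \setminus F$, and by the previous lemma $L_{EF}(X) = E^{(EF)}_{T,N}$ for some $T,N$, while $L_F(X) = E^{(F)}_{T',N'}$ for some $T',N'$.

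For the inequality $Z_F(\phi) \le \hat{Z}^{(t,n)}_F(\phi)$: fix $t,n$. Every $w \in L_F(X)$ lies in $E^{(F)}_{t,n}$ (since genuine globally admissible words are locally admissible and extend to genuine, hence locally admissible, words on any finite superset), so the index set of $Z_F$ is contained in that of $\hat Z^{(t,n)}_F$. Moreover, for each such $w$, any $x \in [w] \cap X$ restricts to some $v := x|_{EF} \in L_{EF}(X) \subset E^{(EF)}_{t,n}$ with $v|_F = w$, and $\sum_{g\in F}\phi(\sigma_g x) = \sum_{g \in F}\phi(\sigma_g v')$ for any $v' \in [v]$, so the inner supremum defining $Z_F$'s summand for $w$ is bounded above by the inner supremum defining $\hat Z^{(t,n)}_F$'s summand for $w$. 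Since $\phi \ge$ nothing in particular is assumed here, all exponentials are positive, so summing over $w$ gives $Z_F(\phi) \le \hat{Z}^{(t,n)}_F(\phi)$, and hence $Z_F(\phi) \le \inf_{t,n}\hat Z^{(t,n)}_F(\phi)$.

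For the reverse inequality, use the stabilization: choose $T,N$ large enough that simultaneously $E^{(F)}_{T,N} = L_F(X)$ and $E^{(EF)}_{T,N} = L_{EF}(X)$ (take the max of the indices from the previous lemma applied to $F$ and to $EF$; enlarging $t,n$ only shrinks the sets $E^{(\cdot)}_{t,n}$, so both identities hold at $(T,N)$). For this $(T,N)$, the sum defining $\hat Z^{(T,N)}_F(\phi)$ is exactly over $w \in L_F(X)$, and for each such $w$ the inner supremum is over $v \in L_{EF}(X)$ with $v|_F = w$, i.e. over exactly the configurations $x|_{EF}$ for $x \in [w]\cap X$; by local constancy this inner supremum equals $\sup_{x\in[w]}\sum_{g\in F}\phi(\sigma_g x)$ (here one uses that any $v \in L_{EF}(X)$ extends to a point of $X$). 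Hence $\hat Z^{(T,N)}_F(\phi) = Z_F(\phi)$, which forces $\inf_{t,n}\hat Z^{(t,n)}_F(\phi) \le Z_F(\phi)$. Combined with the first inequality this gives equality.

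The only mildly delicate point — the ``main obstacle,'' though it is more bookkeeping than difficulty — is making sure the window $EF$ is large enough to detect the full dependence of $\sum_{g\in F}\phi(\sigma_g x)$ and that the two stabilization statements (for $F$ and for $EF$) can be arranged at a common pair $(T,N)$; the monotonicity $E^{(F)}_{t',n'} \subseteq E^{(F)}_{t,n}$ for $t'\ge t, n'\ge n$ takes care of the latter, and the local-constancy observation above takes care of the former. Everything else is a direct unwinding of the definitions of $Z_F$, $\hat Z^{(t,n)}_F$, and $E^{(F)}_{t,n}$, together with the identification $L_F(X) = \bigcap_{t,n} E^{(F)}_{t,n}$ from the preceding lemma.
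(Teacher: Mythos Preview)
Your proof is correct and follows essentially the same approach as the paper: establish $Z_F(\phi)\le \hat Z^{(t,n)}_F(\phi)$ from $L_F(X)\subset E^{(F)}_{t,n}$ and local constancy on $EF$, then use stabilization to get equality at some $(T,N)$. If anything, you are more careful than the paper, which does not explicitly mention the need to stabilize $E^{(EF)}_{t,n}$ as well and simply asserts the conclusion; your observation about monotonicity in $(t,n)$ allowing a common stabilization index is exactly the right way to make that step precise.
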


\begin{proof} Define for each $F \Subset G$ the $(t, n)$-modified partition function $\hat{Z}^{(t, n)}_F(\phi)$ as in the statement of the lemma. Notice that since 
$L_F(X) \subset E_{t, n}^{(F)}$ for all $t, n \in \N$ and $\sum_{g \in F} \phi ( \sigma_g ( x) )$ depends only on $x|_{EF}$, we know $Z_F(\phi) \leq \hat{Z}^{(t, n)}_F(\phi)$. Additionally, since $L_F(X) = \bigcap_{t, n \in \N} E_{t, n}^{(F)} = E_{T, N}^{(F)}$ for some $T, N \in \N$, it immediately follows that $L_F(X) = \min_{t, n \in \N} \hat{Z}^{(t, n)}_F(\phi) = \inf_{t, n \in \N} \hat{Z}^{(t, n)}_F(\phi)$. %\textcolor{green}{NOTE: doesn't this show it's achieved by some $N,T$? Is that what you'll actually use?}
\end{proof}

\begin{lemma}\label{locally constant comp from above} Given a rational-valued $\phi \in C(\mathcal{A}^G)$ which is locally constant for $E \Subset G$ containing $e$, $P_X(\phi)$ is computable from above.
\end{lemma}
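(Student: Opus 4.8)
The plan is to combine the two previous lemmas with the infimum rule for pressure. By Lemma~\ref{modified partition lemma}, for each nonempty $F \Subset G$ we have $Z_F(\phi) = \inf_{t,n \in \N} \hat{Z}^{(t,n)}_F(\phi)$, and each quantity $\hat{Z}^{(t,n)}_F(\phi)$ is explicitly computable: the sets $E_{t,n}^{(F)}$ and $E_{t,n}^{(EF)}$ are finite and decidable (since $\mathcal{F}_n$ is a finite list, local admissibility is checkable, and $G$ has decidable word problem so all relevant translates $FG_t$, $EF$ can be enumerated), and the supremum defining each summand is a maximum over a finite set of rational-valued exponentials of sums of values of $\phi$, which is computable since $\phi$ is rational-valued and locally constant for $E$. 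Hence $\frac{1}{|F|}\log \hat{Z}^{(t,n)}_F(\phi)$ is a computable real for each triple $(F,t,n)$.

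Now enumerate all triples $(F, t, n)$ with $F \Subset G$ nonempty (using the fixed computable enumeration of $G$ to enumerate finite subsets) and $t, n \in \N$. The algorithm, on input precision parameter $k$, successively computes rational upper approximations (to within, say, $2^{-k-1}$) of $\frac{1}{|F|}\log \hat{Z}^{(t,n)}_F(\phi)$ over longer and longer initial segments of this enumeration, and outputs the running minimum. I would then argue this produces a sequence decreasing to $P_X(\phi)$: for the limit, note that
$$\inf_{F,t,n} \frac{1}{|F|}\log \hat{Z}^{(t,n)}_F(\phi) = \inf_F \frac{1}{|F|}\log\Big(\inf_{t,n}\hat{Z}^{(t,n)}_F(\phi)\Big) = \inf_F \frac{1}{|F|}\log Z_F(\phi) = P_X(\phi),$$
where the first equality uses that $\log$ and division by $|F|$ are monotone, the second is Lemma~\ref{modified partition lemma}, and the third is the infimum rule stated in Section~\ref{prelim}. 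Since each $\hat{Z}^{(t,n)}_F(\phi) \geq Z_F(\phi)$ and $\frac{1}{|F|}\log Z_F(\phi) \geq P_X(\phi)$, every term in the enumeration is an upper bound on $P_X(\phi)$, so the running minimum (inflated by the $2^{-k-1}$ approximation error, which one can let shrink as the enumeration proceeds) stays above $P_X(\phi)$ while converging down to it. The monotonicity needed for "decreasing" is immediate from taking running minima.

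The main technical point to get right is bookkeeping: the algorithm cannot know in advance which $(F, t, n)$ realizes values close to $P_X(\phi)$, nor the rate of convergence (consistent with the fact that we only claim computability from above, not computability), so one must interleave the enumeration with progressively finer rational approximations of each $\frac{1}{|F|}\log\hat{Z}^{(t,n)}_F(\phi)$ and argue that the output is both a valid decreasing sequence and has the correct infimum. The one place requiring a small amount of care is ensuring the rational approximations of the logarithms are taken from above (or, if taken to within $2^{-k-1}$, that the resulting term still dominates $P_X(\phi)$ after accounting for the error); this is routine given that each $\hat{Z}^{(t,n)}_F(\phi)$ is a computable positive real. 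No genuine obstacle arises here — the real work was already done in establishing Lemma~\ref{modified partition lemma} and the decidability of the $E_{t,n}^{(F)}$.
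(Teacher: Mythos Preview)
Your proposal is correct and follows essentially the same approach as the paper: both combine the computability of each $\hat Z^{(t,n)}_F(\phi)$, Lemma~\ref{modified partition lemma}, and the infimum rule to write $P_X(\phi)$ as a countable infimum of computable reals. The only cosmetic difference is that the paper restricts to the F{\o}lner sets $F_n$ while you range over all nonempty finite $F\Subset G$; both are valid by the infimum rule, and your added bookkeeping about rational upper approximations of the logarithms just makes explicit what the paper leaves implicit.
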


\begin{proof} First notice that for each $t, n \in \N$, $F \Subset G$, and $v \in \mathcal{A}^F$, it is decidable in finite time if $v \in E_{t, n}^{(F)}$, and therefore $\hat{Z}^{(t, n)}_F(\phi)$ is computable. Further, since we know $P_X(\phi) = \inf_{n \in \N} |F_n|^{-1} \log Z_{F_n}(\phi)$, it follows immediately from Lemma \ref{modified partition lemma} that 
$$P_X(\phi) = \inf_{n, t, s \in \N} \frac{1}{|F_n|} \log \hat{Z}^{(t, s)}_{F_n}(\phi). $$
Since for all $n, t, s \in \N$, $\frac{1}{|F_n|} \log \hat{Z}^{(t, s)}_{F_n}(\phi)$ is computable, it follows that $P_X(\phi)$ is computable from above. 
\end{proof}

\begin{theorem}\label{pressure is computable from above} Let $G$ be a finitely generated, amenable group with decidable word problem and let $X \subset \mathcal{A}^G$ be any subshift. Then $P_X : C(\mathcal{A}^G) \rightarrow \R$ is computable from above from an enumeration of a forbidden list for $X$. 
\end{theorem}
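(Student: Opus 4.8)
The plan is to reduce the case of a general potential $\phi$ to the locally constant, rational-valued case already handled by Lemma~\ref{locally constant comp from above}, and then compose the resulting algorithm with the oracle machinery for forbidden lists. Concretely, suppose we are given an enumeration $\mathcal{F} = \{w_1, w_2, \dots\}$ of a forbidden list for $X$, an oracle $\gamma$ for a potential $\phi \in C(\mathcal{A}^G)$ (so $\gamma(m)$ is a rational-valued locally constant potential with $\|\gamma(m) - \phi\|_\infty < 2^{-m}$, defined on a computable sequence of shapes $E_m \Subset G$), and an index $k \in \N$. We want to output a rational number, and do so in such a way that the outputs decrease to $P_X(\phi)$.

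First I would recall that $P_X$ is Lipschitz with constant $1$ in the uniform norm (as noted in the proof of Theorem~\ref{pressure is computable}), so $|P_X(\phi) - P_X(\gamma(m))| < 2^{-m}$ for every $m$. The naive idea "run the computable-from-above algorithm of Lemma~\ref{locally constant comp from above} on $\gamma(m)$ for larger and larger $m$" does not immediately give a decreasing sequence, because $P_X(\gamma(m))$ need not be monotone in $m$ and the Lemma~\ref{locally constant comp from above} algorithm only converges from above without a known rate. The fix is the standard dovetailing/minimization trick: run in parallel, for all pairs $(m, j) \in \N^2$, the $j$-th stage of the computable-from-above procedure for $P_X(\gamma(m))$, obtaining a rational upper bound $a_{m,j} \searrow P_X(\gamma(m))$ as $j \to \infty$; then at the $k$-th output step, enumerate finitely many pairs and output $\min_{(m,j) \text{ so far}} (a_{m,j} + 2^{-m+1})$, where the $+2^{-m+1}$ cushion accounts both for the Lipschitz error $2^{-m}$ and for the fact that $a_{m,j}$ may not yet be close to $P_X(\gamma(m))$. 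Since $a_{m,j} + 2^{-m+1} > P_X(\gamma(m)) + 2^{-m} \geq P_X(\phi)$ for every $(m,j)$, every output is a strict upper bound for $P_X(\phi)$; and since for any $\delta > 0$ we can pick $m$ with $2^{-m+1} < \delta/2$ and then $j$ large with $a_{m,j} - P_X(\gamma(m)) < \delta/2$, the infimum of the outputs equals $P_X(\phi)$. Taking a running minimum of successive outputs makes the sequence genuinely decreasing.

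One technical wrinkle to address carefully: Lemma~\ref{locally constant comp from above} is stated for $\phi$ locally constant for a shape $E$ \emph{containing $e$}, whereas $\gamma(m)$ is locally constant for $E_m$, which we may not control; but we can enlarge $E_m$ to $E_m \cup \{e\}$ (or any finite set containing it), and $\gamma(m)$ remains locally constant for the larger shape, so this is harmless. Likewise, the partition function $Z_F$ and the modified partition functions $\hat{Z}^{(t,n)}_F$ implicitly use the forbidden list $\mathcal{F}$ only through the locally admissible sets $LA_F(\mathcal{F}_n)$, which are computable from the enumeration $\mathcal{F}$ together with the decidable word problem of $G$; hence the whole pipeline of Lemma~\ref{modified partition lemma} and Lemma~\ref{locally constant comp from above} goes through with $\mathcal{F}$ supplied as input rather than as a fixed object, which is exactly the "given an enumeration of a forbidden list" hypothesis.

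The main obstacle, such as it is, is conceptual rather than computational: one must be careful that "computable from above" is preserved under the composition of two from-above procedures (the oracle for $\phi$ via the rational locally constant approximants, and the from-above computation of $P_X$ for each approximant), since neither comes with a modulus of convergence. The dovetailing-with-cushion argument above is precisely what makes this work, and it is the one place where the proof needs more than a routine citation of the preceding lemmas. Everything else — decidability of membership in $E_{t,n}^{(F)}$, computability of each $\hat{Z}^{(t,n)}_{F_n}(\gamma(m))$, the infimum rule $P_X(\psi) = \inf_n |F_n|^{-1}\log Z_{F_n}(\psi)$ — is already in hand from Section~4 and the preliminaries, so the write-up should be short.
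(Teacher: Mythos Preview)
Your proposal is correct, but it takes a slightly different route from the paper. The paper shifts the potential rather than the output: it sets $\psi_k = \lambda(k) + 2^{-k}$, so that $\psi_k \geq \phi$ pointwise, and then uses monotonicity of pressure to conclude $P_X(\psi_k) \geq P_X(\phi)$ for every $k$. Since also $\psi_k \to \phi$ uniformly, $P_X(\phi) = \inf_k P_X(\psi_k)$, and substituting the infimum representation from Lemma~\ref{locally constant comp from above} yields $P_X(\phi)$ directly as a quadruple infimum $\inf_{n,k,t,s} |F_n|^{-1}\log \hat Z^{(t,s)}_{F_n}(\psi_k)$ of computable quantities. No explicit cushion or dovetailing bookkeeping is needed, and the paper notes as a remark that this argument only uses one-sided information about $\phi$ (approximants from above), whereas your Lipschitz-based cushion genuinely needs the two-sided oracle bound. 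Conversely, your approach is the more portable one: it works for any Lipschitz map that is computable from above on a dense computable subset, without invoking monotonicity. Both arguments are short and essentially equivalent in difficulty; the paper's is marginally cleaner here because monotonicity of $P_X$ is free.
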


\begin{proof} We let $\phi \in C(\mathcal{A}^G)$ and let $\lambda$ be an oracle for $\phi$. We now define a sequence of potentials: 
$$\psi_k =  \lambda(k) + \cdot 2^{-k}. $$

Notice that $\psi_k$ is a sequence of rational-valued potentials locally constant on shapes $E_k$ with both the shapes $E_k$ and values of $\psi_k$ are algorithmically computable from $\lambda$.  Additionally, $\psi_k \geq \phi$ for all $k$ and uniformly approach $\phi$, and so 
$$P_X(\phi) = \lim_{k \rightarrow \infty} P_X( \psi_k ) = \inf_{k \in \N} P_X( \psi_k ).$$
By application of Lemma \ref{locally constant comp from above} we notice that 
$$P_X(\phi) = \inf_{n, k, t, s \in \N} \frac{1}{|F_n|} \log \hat{Z}^{(t,s)}_{F_n}(\psi_k). $$
Since for all $n, k, t, s \in \N$, the quantity $\frac{1}{|F_n|} \log \hat{Z}^{(t,s)}_{F_n}(\psi_k )$ is computable, we can conclude that $P_X : C(\mathcal{A}^G) \rightarrow \R$ is computable from above. 
\end{proof}

\begin{remark} Notice that in the proof of Theorem \ref{pressure is computable from above}, the full strength of the oracle $\lambda$ for $\phi$ was not required. In fact, we only used the fact that the locally constant approximations $\lambda(k)$ are greater than $\phi$ and approach it uniformly (with no knowledge about rate of approximation/no ability to give lower bounds).

% \sout{In particular, we may still compute upper approximations of topological pressure and ground state energy for some interaction $\Phi$ only if we have access to upper approximations for the full interaction $\Phi$.} \textcolor{green}{NOTE: we can put this back if you want, but we haven't talked about those things so it doesn't really make sense. At the very least we'd want to refer to the relevant theorem numbers. But if we're clear that Theorem~\ref{pressure is computable from above} holds under the weaker assumptions, it should follow that these other results do as well, so maybe we don't need to say anything.}
\end{remark}

\section{Computability Properties of Ground State Energy and Entropy}

%In \cite{zero-temp-computable}, Burr and Wolf showed that the map sending potentials to their ground state entropy is computable from above, but not computable in general, for $\Z$-SFTs. We extend this theory to the setting of subshifts under a finitely generated, amenable group. 

In this section, we let $G$ be any countable, amenable group. In Section~\ref{energy} we will consider subshifts $X \subset \mathcal{A}^G$ for which $P_X : C(\mathcal{A}^G) \rightarrow \R$ is computable. For instance, by Theorem \ref{pressure is computable}, when $G$ is finitely generated with decidable word problem, any SI SFT is such an $X$. Or, in the case where $G = \Z$, $X$ can be taken to be any SFT, sofic subshift, or coded subshifts treated in \cite{pressure-comp-beyond-sft}, including many $S$-gap shifts, generalized gap shifts, and $\beta$-shifts.  We show that ground state energy is computable for any such $X$.

In Section~\ref{energy2}, we will assume only that $P_X : C(\mathcal{A}^G) \rightarrow \R$ is computable from above. By Theorem~\ref{pressure is computable from above}, this includes any subshift with an algorithmically enumerated forbidden list. In this setting, we show that ground state energy is computable from above. 

Finally, in Section \ref{groundentropy} we again consider subshifts $X \subset \mathcal{A}^G$ for which $P_X : C(\mathcal{A}^G) \rightarrow \R$ is computable. In this setting we show that the ground state entropy map is computable from above, extending a result from \cite{zero-temp-computable} which applied to $\mathbb{Z}$-SFTs.

% \textcolor{blue}{We also need to correct that section \ref{groundentropy} also assumes $P$ is computable, not just computable from above..}

% \textcolor{red}{NOTE: OK, there's a presentation issue here. Our Theorem~\ref{pressure is computable from above} actually shows that pressure is computable from above for ANY $X$, but given an oracle for the forbidden list. But if we want our hypothesis here to just be straight `computable from above', that only applies if the forbidden list is algorithmically generated. So, for instance, technically our results here do not show that given any $X$ and an oracle for the forbidden list, the ground state entropy/energy are computable from above, even though that's true and maybe we want to say it. I don't immediately have an idea for how to rephrase for this, other than the clunky `if $P_X$ is computable from above given some information $I$, then ground state entropy/energy are computable from above when given the same $I$.' Let's talk about it.}

\subsection{Computability of Ground State Energy}\label{energy}

We let $X \subset  \mathcal{A}^G$ be a subshift for which $P_X : C(\mathcal{A}^G) \rightarrow \R$ is computable. Additionally we fix  $\phi \in C(\mathcal{A}^G)$ and let $\gamma$ be an oracle for $\phi$. Of interest in this section is the map $P_{X,\phi} : (0, \infty) \rightarrow \R$ where $P_{X,\phi}(\beta) := P_X(\beta \phi)$, with $\beta$ representing the inverse temperature of our system. 

\begin{lemma}\label{Pphi computable} $P_{X,\phi}$ is a computable function given an oracle for $\phi$. 
\end{lemma}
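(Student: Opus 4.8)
Lemma~\ref{Pphi computable} asserts that, for a fixed subshift $X$ with $P_X$ computable and a fixed potential $\phi$ with oracle $\gamma$, the single-variable map $\beta \mapsto P_X(\beta\phi)$ on $(0,\infty)$ is computable. The plan is to reduce this directly to the computability of $P_X$ as a function on $C(\mathcal{A}^G)$. Given an oracle for a real number $\beta \in (0,\infty)$ and an oracle $\gamma$ for $\phi$, the first step is to build, from these two oracles, an oracle for the potential $\beta\phi \in C(\mathcal{A}^G)$ in the computable metric space $(C(\mathcal{A}^G), d_{\|\cdot\|_\infty}, (s_n))$. Concretely, given $n$, I would compute a rational approximation $q$ to $\beta$ and a locally constant rational-valued $\gamma(m) = s_{k}$ with $\|\gamma(m) - \phi\|_\infty < 2^{-m}$, and output the index of the locally constant rational potential $q \cdot s_k$; this is again locally constant and rational-valued, hence lies in the dense set $(s_n)$, and its index is computable. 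The main estimate is the triangle inequality
\[
\|q s_k - \beta\phi\|_\infty \leq |q - \beta|\,\|s_k\|_\infty + |\beta|\,\|s_k - \phi\|_\infty,
\]
so choosing $m$ large enough that $2^{-m}$ is small, then a rational bound $B$ on $\|s_k\|_\infty$ (computable since $s_k$ is locally constant and rational-valued) and on $|\beta|$ (computable from the oracle for $\beta$), then $q$ close enough to $\beta$, makes the right-hand side less than $2^{-n}$. This yields an algorithm which, on input $(n, \text{oracle for } \beta, \gamma)$, produces an oracle index for $\beta\phi$.

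Having produced an oracle for $\beta\phi$, the second step is simply to feed it into the Turing machine $T$ witnessing computability of $P_X$: by definition, on input of any oracle for $\beta\phi$ and any precision parameter $n$, $T$ outputs a rational within $2^{-n}$ of $P_X(\beta\phi) = P_{X,\phi}(\beta)$. Composing the oracle-construction algorithm with $T$ gives a Turing machine that computes $P_{X,\phi}$ to arbitrary precision from an oracle for $\beta$ (and the fixed oracle $\gamma$ for $\phi$), which is exactly the claim. One should also note that $\phi$ is fixed throughout, so $\gamma$ can be hard-wired into the machine, and the only genuine input is the oracle for $\beta$; alternatively one may phrase the result as: the two-variable map $(\beta,\phi) \mapsto P_X(\beta\phi)$ is computable, and the stated lemma is the restriction to fixed $\phi$.

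I do not anticipate a serious obstacle here; this is essentially the observation that $C(\mathcal{A}^G)$-valued computability is closed under the computable scalar-multiplication operation $(\beta, \psi) \mapsto \beta\psi$, combined with composition of computable maps. The only point requiring minor care is uniformity of the bounds: one needs the bound $B$ on $\|s_k\|_\infty$ and on $|\beta|$ to be obtained \emph{before} fixing how close $q$ must be to $\beta$, since the permissible error in $q$ depends on $B$. Since $|\beta|$ can be bounded by, say, $\gamma_\beta(1) + 1$ where $\gamma_\beta$ is the oracle for $\beta$, and $\|s_k\|_\infty$ is exactly computable for the locally constant rational potential $s_k$, this causes no circularity. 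The Lipschitz bound $|P_X(\psi) - P_X(\psi')| \leq \|\psi - \psi'\|_\infty$ noted earlier in the excerpt is implicitly what makes the composition work, but strictly speaking we do not even need it, since we are invoking the black-box computability of $P_X$ rather than re-deriving a modulus of continuity.
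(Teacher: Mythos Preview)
Your proposal is correct and follows essentially the same route as the paper: both arguments construct an oracle for $\beta\phi$ from the given oracles for $\beta$ and $\phi$ via the triangle-inequality estimate $\|q\,s_k - \beta\phi\|_\infty \leq |q-\beta|\,\|s_k\|_\infty + |\beta|\,\|s_k-\phi\|_\infty$, and then feed this oracle into the Turing machine witnessing computability of $P_X$. The paper streamlines this slightly by taking the single product $\lambda(n)\gamma(n)$ and bounding its error by $2^{-n}(\|\gamma(1)\|_\infty + |\lambda(1)| + 2)$, then shifting the oracle by a fixed amount, but the underlying idea is identical to yours.
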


\begin{proof} First let $T$ be a Turing machine constructing approximations to $P_X$ given an oracle for $\phi$, which exists by assumption. We now let $\beta > 0$ and let $\lambda$ be an oracle for $\beta$. First note that for all $n \in \N$,  
$$||\lambda(n) \gamma(n) - \beta \phi||_\infty \leq ||(\lambda(n) - \beta) \gamma(n) ||_\infty + \beta ||\gamma(n) - \phi||_\infty \leq 2^{-n} \left( ||\gamma(1)||_\infty + |\lambda(1)| + 2 \right) . $$
Since the quantity $||\gamma(1)||_\infty + |\lambda(1)| + 2$ is finite, we may remove finitely many terms from the oracles $\lambda$ and $\gamma$ to create $\lambda', \gamma'$ so that $\lambda' \cdot \gamma'$ is an oracle for $\beta \phi$. By definition of $T$, it is therefore the case that for any $k$,
$$| T(k, \lambda' \cdot \gamma') - P_{X,\phi}(\beta)| = | T(k, \lambda' \cdot \gamma') - P_X(\beta \phi)| < 2^{-k}. $$
We can therefore conclude that $P_{X,\phi}$ is computable. 
\end{proof}

%We now define for $\phi \in C(X)$, a Gibbs \textcolor{green}{NOTE: why is this called Gibbs? We've never used the term so it's confusing} function $G_\phi : (0, \infty) \rightarrow \R$ such that 
%$$G_\phi(\beta) = \beta^{-1} P_{X,\phi}(\beta). $$ \textcolor{green}{NOTE: say this is well-known? (It's certainly used in other works like Kucherenko-Quas, right?)}

%\begin{lemma}\label{obsolete} For any $\phi \in C(X)$, 
%$$\lim_{\beta \rightarrow \infty} G_\phi(\beta) =  \sup_{\nu \in M_\sigma(X)} \int \phi d\nu. $$
%\end{lemma}
%
%\begin{proof}  We let $\beta_n \rightarrow \infty$ and let $\nu_n$ be any equilibrium state for $\beta_n \phi$. Let $\mu$ be a cluster point for the sequence $(\nu_n)$ and by passing to a subsequence we can assume without loss of generality that $\nu_n \rightarrow \mu$. Since $\mu$ is a ground state for $\phi$, we know $\int \phi d\mu = \sup_{\nu \in M_\sigma(X)} \int \phi d\mu$. \textcolor{green}{By the Variational Principle, $h(\nu_n) \leq h(X) < \infty$ for all $n$.}
%Therefore, 
%$$\lim_{n \rightarrow \infty} G_\phi(\beta_n) = \lim_{n \rightarrow \infty} \beta_n^{-1} \left( h(\nu_n) + \beta_n \int \phi d\nu_n \right) = \lim_{n \rightarrow \infty} \int \phi d\nu_n = \int \phi d\mu = \sup_{\nu \in M_\sigma(X)} \int \phi d\nu. $$
%Since this holds for any sequence $\beta_n \rightarrow \infty$, we can conclude our desired result. 
%\end{proof}
%
%\textcolor{red}{NOTE: what's the point of the previous lemma? It's never referenced, and it seems you need the actual approximation used in the next theorem anyway.}

\begin{theorem}\label{ground state energy} Let $G$ be a countable, amenable group and let  $X \subset \mathcal{A}^G$ be a subshift such that $P_X$ is a computable function. Then, the function sending $\phi$ to 
its ground state energy $\sup_{\nu \in M_\sigma(X)} \int \phi d\nu$ is computable. 
\end{theorem}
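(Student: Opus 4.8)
The plan is to exploit the well-known fact that the ground state energy is a zero-temperature limit of derivatives of the pressure, or more precisely that
$$
\sup_{\nu \in M_\sigma(X)} \int \phi \, d\nu = \lim_{\beta \to \infty} \frac{P_X(\beta \phi)}{\beta} = \inf_{\beta > 0} \frac{P_X(\beta\phi)}{\beta},
$$
where the last equality is where amenability and the variational principle enter. Indeed, using $P_X(\beta\phi) = \sup_{\mu}\left( h(\mu) + \beta\int\phi\,d\mu\right)$, one gets $\frac{P_X(\beta\phi)}{\beta} = \sup_\mu\left(\frac{h(\mu)}{\beta} + \int\phi\,d\mu\right)$; since $0 \le h(\mu) \le \log|\mathcal{A}|$ is bounded, this quantity decreases to $\sup_\mu \int\phi\,d\mu$ as $\beta\to\infty$, and monotonicity in $\beta$ gives that the limit is an infimum. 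So the first step is to record this identity, citing the variational principle already stated in the preliminaries.

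The second step is to turn the infimum into an effective computation. By Lemma~\ref{Pphi computable}, the map $\beta \mapsto P_X(\beta\phi)$ is computable given an oracle for $\phi$; in particular, for each fixed rational $\beta$, $P_X(\beta\phi)/\beta$ is computable to any desired precision. Since the ground state energy is the infimum of these values over $\beta > 0$, we immediately get computability from above: evaluate $P_X(\beta\phi)/\beta$ at $\beta = 1, 2, 3, \dots$ to suitable precision and take running minima. The remaining work — and the main obstacle — is getting a computable \emph{rate} of convergence, i.e.\ upgrading "computable from above" to "computable." For this I would use the explicit bound $0 \le h(\mu) \le \log|\mathcal{A}|$, which is an a priori constant depending only on the alphabet and available to the algorithm. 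From the variational principle we get
$$
\sup_\nu \int\phi\,d\nu \ \le\ \frac{P_X(\beta\phi)}{\beta} \ \le\ \sup_\nu\int\phi\,d\nu + \frac{\log|\mathcal{A}|}{\beta},
$$
so taking $\beta$ large enough (say $\beta > 2^{n+1}\log|\mathcal{A}|$, with $\log|\mathcal{A}|$ replaced by a rational upper bound) forces $P_X(\beta\phi)/\beta$ to lie within $2^{-n-1}$ of the ground state energy; then computing $P_X(\beta\phi)/\beta$ itself to precision $2^{-n-1}$ via Lemma~\ref{Pphi computable} yields a $2^{-n}$ approximation. This gives an honest oracle for the ground state energy.

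A couple of technical points need care. First, one must handle the edge case $\log|\mathcal{A}| = 0$ (singleton alphabet), where the ground state energy is trivially $\phi$ evaluated at the unique point; more simply, one replaces $\log|\mathcal{A}|$ throughout by $\max(1, \lceil\log|\mathcal{A}|\rceil)$ to keep denominators positive. Second, feeding a large integer $\beta$ into the computable function $P_{X,\phi}$ requires an oracle for $\beta$, but every integer is trivially computable, so this is harmless; one should just note that the chosen $\beta$ is a computable function of $n$. Third, the argument only uses that $M_\sigma(X)$ is nonempty (so the infimum is over a nonempty set and the supremum defining the ground state energy is attained), which holds for any $G$-subshift as recorded in the preliminaries; no further structure on $G$ or $X$ beyond computability of $P_X$ is needed, which is exactly the generality claimed. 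I expect the write-up to be short, with the only genuinely substantive ingredient being the two-sided sandwich inequality above.
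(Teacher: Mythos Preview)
Your proposal is correct and takes essentially the same approach as the paper: both derive the sandwich inequality $\sup_\nu\int\phi\,d\nu \le P_X(\beta\phi)/\beta \le \sup_\nu\int\phi\,d\nu + C/\beta$ with $C$ bounded by $\log|\mathcal{A}|$ (the paper gets $C = 2h(X)$ via equilibrium states, you get $C = \log|\mathcal{A}|$ directly from the variational formula), then choose $\beta$ large and invoke Lemma~\ref{Pphi computable}. Your route is slightly more streamlined since you avoid introducing equilibrium states and ground states explicitly, but the argument is the same in substance.
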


\begin{proof} 
%We first let $\phi \in C(X)$ with an oracle $\gamma$ for $\phi$. Notice that for any $c \in \R$, 
%$$\sup_{\nu \in M_\sigma(X)} \int \phi d\nu = \sup_{\nu \in M_\sigma(X)} \int (\phi+c) d\nu - c. $$
%Thus by taking $c = ||\gamma(1)||_\infty + 1$, a rational constant, we may assume without loss of generality that $\phi \geq 0$. 
We fix $\beta > 0$ and let $\nu_\beta$ be an equilibrium state for $\beta \phi$. Let $\mu$ be any ground state for $\phi$. By the variational principle combined with the fact that $\mu$ maximizes the integral of $\phi$, we know 
$$h(X) + \beta \int \phi d\mu \geq h(\nu_\beta) + \beta \int \phi d\nu_\beta \geq h(\mu) + \beta \int \phi d\mu . $$
Since $0 \leq h(\mu), h(\nu_\beta) \leq h(X)$ (again by the Variational Principle),
$$h(X) - h(\nu_\beta) \geq \beta \left( \int \phi d\nu_\beta - \int \phi d\mu \right) \geq h(\mu) - h(\nu_\beta). $$
Thus we have 
$$\left| \int \phi d\mu - \int \phi d\nu_\beta \right| \leq \frac{h(X)}{\beta}. $$

We therefore know 
$$\left| \frac{P_{X,\phi}(\beta)}{\beta} - \sup_{\nu \in M_\sigma(X)} \int \phi d\nu \right| = \left| \frac{P_{X,\phi}(\beta)}{\beta} - \int \phi d\mu \right| = \left| \frac{ h(\nu_\beta)}{\beta} + \int \phi d\nu_\beta - \int \phi d\mu \right| \leq \frac{2 h(X)}{\beta}. $$

It therefore follows that for every $\epsilon > 0$, if we take
$\beta \geq \lceil \frac{4 \log |\mathcal{A}|}{\epsilon} \rceil \geq \frac{4 h(X)}{\epsilon}$, then any $\epsilon/2$-approximation to $\beta^{-1} P_\phi(\beta)$ is within $\epsilon$ of $\sup_{\nu \in M_\sigma(X)} \int \phi d\nu$. By Lemma~\ref{Pphi computable}, such approximations are algorithmically computable, completing the proof. %$\sup_{\nu \in M_\sigma(X)} \int \phi d\nu$. 
\end{proof}

\subsection{Computability from Above of Ground State Energy}\label{energy2}

In this section, we will show that for any subshift $X$ for which $P_X : C(\mathcal{A}^G) \rightarrow \R$ is computable from above, the map sending $\phi$ to $\sup_{\nu \in M_\sigma(X)} \int \phi d\mu$ is computable from above. We begin with the following lemma: 

\begin{lemma} For all $\phi \in C(\mathcal{A}^G)$, $\sup_{\nu \in M_\sigma(X)} \int \phi d\mu = \inf_{\beta > 0} \beta^{-1} P_{X,\phi}(\beta)$. 
\end{lemma}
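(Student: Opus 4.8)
The plan is to prove the two inequalities $\sup_{\nu \in M_\sigma(X)} \int \phi\, d\nu \leq \inf_{\beta > 0} \beta^{-1} P_{X,\phi}(\beta)$ and $\sup_{\nu \in M_\sigma(X)} \int \phi\, d\nu \geq \inf_{\beta > 0} \beta^{-1} P_{X,\phi}(\beta)$ separately, using the Variational Principle $P_X(\beta\phi) = \sup_{\mu \in M_\sigma(X)} \big( h(\mu) + \beta \int \phi\, d\mu \big)$ throughout. For the first inequality, fix any $\nu \in M_\sigma(X)$ and any $\beta > 0$; the Variational Principle gives $P_{X,\phi}(\beta) \geq h(\nu) + \beta \int \phi\, d\nu \geq \beta \int \phi\, d\nu$ since $h(\nu) \geq 0$. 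Dividing by $\beta$ yields $\beta^{-1} P_{X,\phi}(\beta) \geq \int \phi\, d\nu$, and taking the supremum over $\nu$ then the infimum over $\beta$ gives the first inequality.

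For the reverse inequality, I would fix an arbitrary $\beta > 0$ and bound $\beta^{-1} P_{X,\phi}(\beta)$ from above. Let $\mu_\beta$ be an equilibrium state for $\beta\phi$ (which exists by upper semi-continuity of $h$, as noted in the preliminaries), so $P_{X,\phi}(\beta) = h(\mu_\beta) + \beta \int \phi\, d\mu_\beta$. Then
\[
\frac{P_{X,\phi}(\beta)}{\beta} = \frac{h(\mu_\beta)}{\beta} + \int \phi\, d\mu_\beta \leq \frac{h(X)}{\beta} + \sup_{\nu \in M_\sigma(X)} \int \phi\, d\nu,
\]
using $h(\mu_\beta) \leq h(X) = P_X(0)$. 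Taking the infimum over $\beta > 0$, the term $h(X)/\beta \to 0$, so $\inf_{\beta > 0} \beta^{-1} P_{X,\phi}(\beta) \leq \sup_{\nu \in M_\sigma(X)} \int \phi\, d\nu$. Combining the two inequalities gives the claim. Note this argument only needs $h(X) < \infty$, which holds for subshifts over a finite alphabet since $h(X) \leq \log|\mathcal{A}|$.

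I do not expect a serious obstacle here; the statement is essentially a repackaging of the estimates already used in the proof of Theorem~\ref{ground state energy}, just phrased as an exact identity rather than an approximation scheme. The one point to be careful about is that the infimum over $\beta$ on the right-hand side is genuinely an infimum and need not be attained — but since we are proving an identity between an infimum and a supremum, not claiming either is realized, this causes no difficulty. The only mild subtlety is ensuring $h(X)$ is finite so that $h(X)/\beta \to 0$; this is immediate from the finite alphabet.

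\begin{proof}
We prove both inequalities. For any $\nu \in M_\sigma(X)$ and any $\beta > 0$, the Variational Principle gives
\[
P_{X,\phi}(\beta) = P_X(\beta\phi) = \sup_{\mu \in M_\sigma(X)} \left( h(\mu) + \beta \int \phi\, d\mu \right) \geq h(\nu) + \beta \int \phi\, d\nu \geq \beta \int \phi\, d\nu,
\]
where the last inequality uses $h(\nu) \geq 0$. Dividing by $\beta$ and taking the supremum over $\nu \in M_\sigma(X)$ and then the infimum over $\beta > 0$, we obtain
\[
\sup_{\nu \in M_\sigma(X)} \int \phi\, d\nu \leq \inf_{\beta > 0} \frac{P_{X,\phi}(\beta)}{\beta}.
\]
Conversely, fix $\beta > 0$ and let $\mu_\beta \in M_\sigma(X)$ be an equilibrium state for $\beta\phi$, so that $P_{X,\phi}(\beta) = h(\mu_\beta) + \beta \int \phi\, d\mu_\beta$. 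Since $0 \leq h(\mu_\beta) \leq h(X) \leq \log|\mathcal{A}|$,
\[
\frac{P_{X,\phi}(\beta)}{\beta} = \frac{h(\mu_\beta)}{\beta} + \int \phi\, d\mu_\beta \leq \frac{h(X)}{\beta} + \sup_{\nu \in M_\sigma(X)} \int \phi\, d\nu.
\]
Taking the infimum over $\beta > 0$ and noting $h(X)/\beta \to 0$ as $\beta \to \infty$ yields
\[
\inf_{\beta > 0} \frac{P_{X,\phi}(\beta)}{\beta} \leq \sup_{\nu \in M_\sigma(X)} \int \phi\, d\nu.
\]
Combining the two inequalities completes the proof.
\end{proof}
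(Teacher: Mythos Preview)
Your proof is correct. The approach differs somewhat from the paper's: rather than proving two inequalities directly, the paper shows that the map $\beta \mapsto \beta^{-1} P_{X,\phi}(\beta)$ is non-increasing (by comparing equilibrium states $\nu_\alpha, \nu_\beta$ for $\alpha > \beta$), and then invokes the already-established limit $\lim_{\beta \to \infty} \beta^{-1} P_{X,\phi}(\beta) = \sup_{\nu} \int \phi\, d\nu$ (from the estimates in the proof of Theorem~\ref{ground state energy}) to identify the infimum with that limit. Your sandwich argument is more self-contained for this lemma alone, since it does not rely on anything from the preceding section. On the other hand, the paper's route yields monotonicity of $\beta \mapsto \beta^{-1} P_{X,\phi}(\beta)$ as a byproduct, and this is used immediately afterward in the proof of Corollary~\ref{computable from above cor ground state} to pass from an infimum over $\beta > 0$ to an infimum over $n \in \mathbb{N}$. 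Both arguments rest on the same ingredients (the Variational Principle and $0 \leq h(\mu) \leq h(X) < \infty$), so the difference is one of packaging rather than substance.
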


\begin{proof}  We now let $\alpha > \beta > 0$ and let $\nu_\alpha, \nu_\beta$ be equilibrium states for $\alpha \phi$ and $\beta \phi$ respectively. We now examine 
$$\beta^{-1} P_{X,\phi}(\beta) = \beta^{-1} \left( h(\nu_\beta) + \int \beta \phi d\nu_\beta \right) \geq \beta^{-1} \left( h(\nu_\alpha) + \int \beta \phi d\nu_\alpha \right) $$
$$= \beta^{-1} h(\nu_\alpha) + \int \phi d\nu_\alpha \geq \alpha^{-1} h(\nu_\alpha) + \int \phi d\nu_\alpha = \alpha^{-1} P_{X,\phi}(\alpha). $$
We can therefore conclude the function $ \beta \mapsto \beta^{-1} P_{X,\phi}(\beta)$ is non-increasing for $\beta > 0$, and thus 
$$\sup_{\nu \in M_\sigma(X)} \int \phi d\mu = \lim_{\beta \rightarrow \infty} \beta^{-1} P_{X,\phi}(\beta)  = \inf_{\beta > 0} \beta^{-1} P_{X,\phi}(\beta). $$
\end{proof}

\begin{corollary}\label{computable from above cor ground state}  Let $G$ be a countable, amenable group and let $X \subset \mathcal{A}^G$ be any subshift such that $P_X : C(\mathcal{A}^G) \rightarrow \R$ is computable from above. Then the function sending $\phi$ to its ground state energy $\sup_{\nu \in M_\sigma(X)} \int \phi d\mu$ is computable from above.  
\end{corollary}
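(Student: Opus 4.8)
The plan is to combine the previous lemma, which identifies the ground state energy with $\inf_{\beta > 0} \beta^{-1} P_{X,\phi}(\beta)$, with the hypothesis that $P_X$ is computable from above, by showing that $P_{X,\phi}$ is itself computable from above given an oracle for $\phi$, and that the rescaling and infimum over $\beta$ preserve computability from above. Since computability from above only requires producing a sequence of rational upper bounds that decreases to the true value, the argument should be considerably softer than the proof of Theorem~\ref{ground state energy}, where we needed the explicit error bound $\beta \geq \lceil 4\log|\mathcal{A}|/\epsilon\rceil$ to control the rate.

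First I would establish the analogue of Lemma~\ref{Pphi computable} for computability from above: given an oracle $\gamma$ for $\phi$ and the Turing machine $T$ witnessing that $P_X$ is computable from above, I need to produce, for each $\beta > 0$ with oracle $\lambda$, a decreasing rational sequence converging to $P_{X,\phi}(\beta) = P_X(\beta\phi)$. As in the proof of Lemma~\ref{Pphi computable}, one forms $\lambda' \cdot \gamma'$, an oracle for $\beta\phi$ obtained by deleting finitely many initial terms from $\lambda$ and $\gamma$; feeding this oracle to $T$ yields a sequence $(T(k, \lambda'\cdot\gamma'))_{k}$ decreasing to $P_X(\beta\phi)$. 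Dividing by $\beta$ (using the oracle $\lambda$ to get rational lower bounds on $\beta$, hence rational upper bounds on $\beta^{-1}$ times the positive quantity $P_X(\beta\phi)$ — here one uses $P_X(\beta\phi) \ge P_X(0) \ge 0$ or simply notes pressure is bounded below, adjusting by a constant if necessary) still gives a sequence of rational upper bounds on $\beta^{-1}P_{X,\phi}(\beta)$. Taking, say, $\beta$ ranging over the positive integers and running all these computations in parallel (a standard dovetailing), and outputting a running minimum of all rational upper bounds produced so far, yields a decreasing sequence of rationals. By the previous lemma this infimum over $\beta > 0$ — which, by monotonicity of $\beta \mapsto \beta^{-1}P_{X,\phi}(\beta)$, equals the infimum over positive integers $\beta$ — is exactly $\sup_{\nu \in M_\sigma(X)} \int \phi\, d\mu$, so the running minimum decreases to the ground state energy.

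The one technical wrinkle, which is the main thing to be careful about, is the division by $\beta$: computability from above is not preserved under arbitrary continuous operations, so I must check that $x \mapsto x/\beta$ (for the relevant sign of $x$) sends upper bounds to upper bounds, which it does since $\beta^{-1} > 0$, provided $P_{X,\phi}(\beta)$ is known to be nonnegative — and this can be arranged since replacing $\phi$ by $\phi + C$ for a suitable rational constant $C$ shifts the ground state energy by $C$ and $P_X(\beta\phi)$ by $\beta C$, and such a shift is harmless and computable. With that in hand, the dovetailing over integer $\beta$ and over the index $k$ of the outer machine $T$, combined with a running-minimum output, is routine, and the monotonicity lemma just proved guarantees the limit is correct. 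This completes the proof of Corollary~\ref{computable from above cor ground state}.
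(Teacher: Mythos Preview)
Your proposal is correct and follows essentially the same route as the paper: show (as in Lemma~\ref{Pphi computable}) that $P_{X,\phi}$ inherits computability from above from $P_X$, then use the preceding monotonicity lemma to rewrite the ground state energy as $\inf_{n\in\N} n^{-1}P_{X,\phi}(n)$, and dovetail to output a running minimum. One small simplification: once you restrict to integer $\beta=n$, division by $n$ is multiplication by the fixed positive rational $1/n$, which trivially sends a decreasing rational sequence with limit $P_{X,\phi}(n)$ to a decreasing rational sequence with limit $n^{-1}P_{X,\phi}(n)$, regardless of sign---so the nonnegativity/shift-by-$C$ maneuver is unnecessary.
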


\begin{proof} Similar to Lemma~\ref{Pphi computable}, since $P_X$ is computable by above it follows that $P_{X,\phi}(\beta )$ is computable from above (and therefore $\beta^{-1} P_{X,\phi}(\beta)$ is also computable from above). Since $\sup_{\nu \in M_\sigma(X)} \int \phi d\mu = \inf_{\beta > 0} G_\phi(\beta)$, and $G_\phi(\beta)$ is continuous and non-increasing, we also know $\sup_{\nu \in M_\sigma(X)} \int \phi d\mu = \inf_{n \in \N} n^{-1} P_{X,\phi}(n)$. It immediately follows that $\sup_{\nu \in M_\sigma(X)} \int \phi d\mu$ is computable from above. 
\end{proof}

\begin{corollary}\label{computable from above cor2 ground state}  In particular, this implies that if $G$ is finitely generated with decidable word problem and $X$ has a computable forbidden list, then the function sending $\phi$ to its ground state energy $\sup_{\nu \in M_\sigma(X)} \int \phi d\mu$ is computable from above.  Otherwise, by Theorem \ref{pressure is computable from above}, If $\lambda$ is an oracle for the forbidden list of $X$, then the ground state energy function is computable from above given $\lambda$. 
\end{corollary}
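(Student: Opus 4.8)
The plan is to obtain this corollary immediately by composing two results already established in the excerpt: Theorem~\ref{pressure is computable from above} and Corollary~\ref{computable from above cor ground state}. The observation is that Corollary~\ref{computable from above cor ground state} reduces computability-from-above of the ground state energy map to computability-from-above of $P_X$, while Theorem~\ref{pressure is computable from above} supplies exactly the latter under the stated hypotheses on $G$ and on the forbidden list of $X$.

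Concretely, first I would invoke Theorem~\ref{pressure is computable from above}: since $G$ is finitely generated amenable with decidable word problem and $X$ admits an algorithmically enumerated forbidden list, $P_X : C(\mathcal{A}^G) \to \R$ is computable from above. I would then feed this into Corollary~\ref{computable from above cor ground state}, whose hypothesis is precisely that $P_X$ is computable from above, to conclude that $\phi \mapsto \sup_{\nu \in M_\sigma(X)} \int \phi \, d\nu$ is computable from above. For the relativized statement, where one is handed only an oracle $\lambda$ enumerating a forbidden list for $X$ rather than a Turing machine producing it, the same two steps go through verbatim with every machine granted access to $\lambda$: Theorem~\ref{pressure is computable from above} (which, as noted in the remark following its proof, uses only that $\lambda$ enumerates a forbidden list for $X$) yields a machine that, relative to $\lambda$, produces a sequence of rationals decreasing to $P_X(\phi)$ on any oracle for $\phi$; and then the construction behind Corollary~\ref{computable from above cor ground state} produces an above-approximator for the ground state energy which queries $P_X$ only through such approximations, so it composes with the $\lambda$-relative machine to give an above-approximator relative to $\lambda$.

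I expect no genuine obstacle here; this is a bookkeeping statement rather than a new argument. The only point that merits a sentence of care is verifying that the reduction in Corollary~\ref{computable from above cor ground state} --- namely $\sup_{\nu \in M_\sigma(X)} \int \phi \, d\nu = \inf_{n \in \N} n^{-1} P_{X,\phi}(n)$ together with the monotonicity of $\beta \mapsto \beta^{-1} P_{X,\phi}(\beta)$ --- is uniform and oracle-agnostic, i.e. that it treats the $P_X$-machine as a black box and therefore remains valid when that machine is itself only $\lambda$-relative. Once this is observed, the corollary follows.
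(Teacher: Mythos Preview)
Your proposal is correct and matches the paper's treatment exactly: the paper gives no proof for this corollary, stating it as an immediate consequence of Theorem~\ref{pressure is computable from above} combined with Corollary~\ref{computable from above cor ground state}, which is precisely the composition you describe.
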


\subsection{Computability from Above of Ground State Entropy}\label{groundentropy}

We now remind the reader that we call the ground state entropy of a potential $\phi$ the common value of the entropy for ground states for $\phi$. In particular, this is the maximal entropy among invariant measures which maximize the integral of $\phi$.  

\begin{corollary}\label{ground state entropy} Let $G$ be a countable, amenable group and let  $X \subset \mathcal{A}^G$ be a subshift such that $P_X: C(\mathcal{A}^G) \rightarrow \R$ is  computable. Then, the function that sends $\phi$ to the ground state entropy of $\phi$ is computable from above.  
\end{corollary}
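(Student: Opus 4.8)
The plan is to express the ground state entropy as an infimum of computable quantities, building on the zero-temperature analysis already carried out for ground state energy. Fix a potential $\phi$ with oracle $\gamma$, and recall from Section~\ref{energy} that $P_{X,\phi}(\beta) = P_X(\beta\phi)$ is computable and that $P_{X,\phi}(\beta) = h(\nu_\beta) + \beta\int\phi\,d\nu_\beta$ for an equilibrium state $\nu_\beta$ of $\beta\phi$. Writing $m(\phi) = \sup_{\nu\in M_\sigma(X)}\int\phi\,d\nu$ for the ground state energy (which Theorem~\ref{ground state energy} tells us is computable), the natural candidate is
\[
P_{X,\phi}(\beta) - \beta\, m(\phi) = h(\nu_\beta) - \beta\left(m(\phi) - \int\phi\,d\nu_\beta\right).
\]
The second term is nonnegative, so this quantity is at most $h(\nu_\beta) \le h(X)$; more importantly, I would show it is monotone non-increasing in $\beta$ and converges to the ground state entropy. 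Monotonicity follows from a computation parallel to the one in Section~\ref{energy2}: for $\alpha > \beta > 0$ one compares $h(\nu_\beta) + \beta\int\phi\,d\nu_\beta$ against the value obtained by plugging $\nu_\alpha$ into the variational principle for $\beta\phi$, and vice versa, and the cross terms rearrange to give $P_{X,\phi}(\beta) - \beta m(\phi) \ge P_{X,\phi}(\alpha) - \alpha m(\phi)$.

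Next I would identify the limit. As $\beta\to\infty$, any weak-$*$ limit point $\mu$ of $(\nu_\beta)$ is by definition a ground state, hence maximizes $\int\phi\,d\nu$ (so $\int\phi\,d\mu = m(\phi)$) and has entropy equal to the ground state entropy $h_{\mathrm{gs}}(\phi)$. Using upper semicontinuity of $h$ and continuity of $\nu\mapsto\int\phi\,d\nu$, one shows $h(\nu_\beta) \to h_{\mathrm{gs}}(\phi)$ and $\beta(m(\phi) - \int\phi\,d\nu_\beta) \to 0$ along that subsequence; combined with monotonicity of the whole expression in $\beta$, the full limit exists and equals $h_{\mathrm{gs}}(\phi)$. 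Therefore
\[
h_{\mathrm{gs}}(\phi) = \inf_{\beta > 0}\left(P_{X,\phi}(\beta) - \beta\, m(\phi)\right) = \inf_{n\in\N}\left(P_{X,\phi}(n) - n\, m(\phi)\right),
\]
the last equality by monotonicity and continuity. Since $P_{X,\phi}(n)$ is computable (Lemma~\ref{Pphi computable}) and $m(\phi)$ is computable (Theorem~\ref{ground state energy}), each term $P_{X,\phi}(n) - n\,m(\phi)$ is computable uniformly in $n$, so the infimum is computable from above.

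The main obstacle is the convergence $h(\nu_\beta) \to h_{\mathrm{gs}}(\phi)$, i.e. verifying that limit points of equilibrium states really do realize the ground state entropy and that the energy-defect term $\beta(m(\phi)-\int\phi\,d\nu_\beta)$ genuinely vanishes rather than staying bounded away from zero. This is essentially the content of the fact (asserted in the preliminaries) that ground states maximize entropy among $\phi$-maximizing measures, so I would either cite that directly or give the short argument: if $\nu^*$ is a $\phi$-maximizing measure with $h(\nu^*) = h_{\mathrm{gs}}(\phi)$, then $P_{X,\phi}(\beta) \ge h(\nu^*) + \beta m(\phi)$, giving the lower bound $P_{X,\phi}(\beta) - \beta m(\phi) \ge h_{\mathrm{gs}}(\phi)$ for every $\beta$; the matching upper bound comes from the limit-point argument with upper semicontinuity of entropy. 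One should also note explicitly why there is no hope of computability (as opposed to computability from above) — this matches the cited $\mathbb{Z}$-SFT counterexample of \cite{zero-temp-computable} and the promised $\mathbb{Z}^2$ full shift example — but that is a remark rather than part of the proof.
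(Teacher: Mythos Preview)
Your approach is correct and matches the paper's: both identify $P_{X,\phi}(\beta) - \beta\, m(\phi)$ as the approximating quantity, observe that it is always at least $h_{\mathrm{gs}}(\phi)$ by the variational principle applied to a ground state, and converges to $h_{\mathrm{gs}}(\phi)$ from above as $\beta \to \infty$, then invoke computability of $P_{X,\phi}$ and of $m(\phi)$ (Theorem~\ref{ground state energy}). You supply considerably more detail than the paper---in particular the monotonicity argument and the limit-point/upper-semicontinuity justification for convergence---whereas the paper simply asserts that the quantity ``converges from above to $h(\mu)$.''
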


\begin{proof} First let $\mu$ be any ground state for $\phi$. First notice that $P_{X,\phi}(\beta)$ approaches from above an asymptote function $h(\mu) + \beta \sup_{\nu \in M_\sigma(X)} \int \phi d\nu$.  In particular,  
$$P_{X,\phi}(\beta) \geq h(\mu) + \beta  \int \phi d\mu .  $$
Our upper approximation of $h(\mu)$ can be identified by 
$$P_{X,\phi}(\beta) - \beta  \int \phi d\mu, $$
which converges from above to $h(\mu)$ as $\beta \rightarrow \infty$. Since $P_{X,\phi}(n)$ and $n \int \phi d\mu$ are computable for all $n \in \N$ (by assumption and by Theorem~\ref{ground state energy}, respectively), we may compute $h(\mu)$ from above.
\end{proof}

\begin{corollary}\label{si sft ground state entropy} For an SI SFT $X$ over a finitely generated, amenable group $G$ with decidable word problem, the function that sends $\phi$ to the ground state entropy of $\phi$ is computable from above.  
\end{corollary}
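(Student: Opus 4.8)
The plan is to derive Corollary \ref{si sft ground state entropy} as an immediate consequence of two results already established in the excerpt: Theorem \ref{pressure is computable}, which asserts that $P_X : C(\mathcal{A}^G) \to \R$ is computable whenever $G$ is finitely generated amenable with decidable word problem and $X$ is an SI SFT; and Corollary \ref{ground state entropy}, which asserts that for any countable amenable $G$ and any subshift $X$ with $P_X$ computable, the ground state entropy map is computable from above.

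First I would observe that an SI SFT $X \subset \mathcal{A}^G$ over a finitely generated amenable $G$ with decidable word problem satisfies exactly the hypotheses of Theorem \ref{pressure is computable}, so $P_X$ is a computable function. In particular, such a $G$ is in particular a countable amenable group, so the hypotheses of Corollary \ref{ground state entropy} are met as well. Applying Corollary \ref{ground state entropy} with this $X$ then yields that the map sending $\phi$ to its ground state entropy is computable from above, which is precisely the claim.

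There is essentially no obstacle here; the statement is a specialization of Corollary \ref{ground state entropy} to a particular family of subshifts for which computability of $P_X$ is known. The only thing worth spelling out is the chain of implications ``SI SFT over f.g.\ amenable $G$ with decidable word problem'' $\Rightarrow$ ``$P_X$ computable'' (Theorem \ref{pressure is computable}) $\Rightarrow$ ``ground state entropy computable from above'' (Corollary \ref{ground state entropy}), and to note that finite generation and decidability of the word problem are only used to invoke Theorem \ref{pressure is computable}, while Corollary \ref{ground state entropy} itself needs nothing beyond countable amenability and computability of $P_X$. So the proof is a single sentence quoting both prior results; I would write it as such.
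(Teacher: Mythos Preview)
Your proposal is correct and matches the paper's approach exactly: the paper states this corollary without proof, treating it as an immediate specialization of Corollary~\ref{ground state entropy} to the class of subshifts for which Theorem~\ref{pressure is computable} guarantees computability of $P_X$. Your chain of implications is precisely what is intended.
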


Unfortunately, as noted by Kucherenko and Quas, we have no control on the rate of the convergence to $h(\mu)$. In fact, in the 
$G = \Z$ setting they showed in \cite{kucherenko-quas} that $P_{X,\phi}(\beta) - \beta \int \phi d\mu$ can converge to $h(\mu)$ at most exponentially quickly for H\o lder continuous potentials that are not cohomologous to a constant. Further, the rate of convergence can be arbitrarily slow. In fact, the map is not computable in general:

\begin{obs} There exists a full-shift $\mathcal{A}^{\Z^2}$ for which the map that sends $\phi$ to ground state entropy is not computable. 
\end{obs}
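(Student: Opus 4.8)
The strategy is to exhibit a specific locally constant potential $\phi$ on a full shift $\mathcal{A}^{\Z^2}$ whose ground state entropy encodes the answer to an undecidable problem. The natural vehicle is the Hochman--Meyerovitch construction: by Theorem~\ref{hoch-meyer}, there is a $\Z^2$-SFT $Y \subset \mathcal{B}^{\Z^2}$ (with forbidden list consisting of nearest-neighbor patterns, after recoding) whose topological entropy $h(Y)$ is a nonnegative real that is computable from above but not computable. I would take $\mathcal{A} = \mathcal{B}$, and let $\phi : \mathcal{A}^{\Z^2} \to \R$ be the indicator-type potential that penalizes the finitely many forbidden patterns of $Y$: concretely, fix the finite shape $E$ on which the SFT rules for $Y$ are defined, and set $\phi(x) = -N \cdot (\text{number of translates } Eg \text{ with } g \in \text{ some fundamental window such that } x|_{Eg} \text{ is forbidden})$, suitably normalized to be locally constant; the cleanest version is $\phi(x) = -\mathbf{1}[x|_E \in \mathcal{F}]$, which is locally constant for $E$ and rational-valued.

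**Key steps.** First, observe that the $\phi$-maximizing invariant measures on $X = \mathcal{A}^{\Z^2}$ are exactly the invariant measures supported on $Y$: since $\phi \le 0$ with equality precisely on the (shift-invariant, closed) set $Y$, we have $\sup_{\nu} \int \phi \, d\nu = 0$, attained exactly by $\nu \in M_\sigma(Y)$ — here one uses that $Y$ is nonempty, which is part of the Hochman--Meyerovitch hypothesis, and that for an invariant measure $\int \phi \, d\nu = 0$ forces $\nu(\{x : x|_{Eg} \in \mathcal{F}\}) = 0$ for every $g$ by invariance, hence $\nu(Y) = 1$. Second, conclude that the ground state entropy of $\phi$ equals $\sup_{\nu \in M_\sigma(Y)} h(\nu) = h(Y)$ by the variational principle for the subshift $Y$. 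Third, invoke Hochman--Meyerovitch: $h(Y)$ is not a computable real. Therefore the map $\phi \mapsto \text{(ground state entropy of } \phi)$, evaluated at this particular $\phi$, produces a non-computable value, so the map is not computable. One should also note, for consistency with Corollary~\ref{ground state entropy}, that $P_X$ for the full shift $X = \mathcal{A}^{\Z^2}$ is certainly computable (pressure of a locally constant potential on a full shift is explicitly computable, e.g. by Theorem~\ref{compute time2}), so this $\phi$ genuinely falls under the hypotheses of that corollary while violating its conclusion upgraded to computability.

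**Main obstacle.** The only delicate point is step one: verifying that no invariant measure outside $M_\sigma(Y)$ can be $\phi$-maximizing. This is not hard but requires care — one must rule out the possibility that $\int \phi \, d\nu = 0$ for some $\nu$ not supported on $Y$. The argument is: $\int \phi \, d\nu = -\nu(\{x : x|_E \in \mathcal{F}\})$, so $\int \phi \, d\nu = 0$ iff $\nu$-a.e. $x$ has $x|_E \notin \mathcal{F}$; by shift-invariance of $\nu$ the same holds for $x|_{Eg}$ for every $g \in G$, and intersecting over the countably many $g$ gives $\nu(Y) = 1$. (Here I am using the standard recoding so that $\mathcal{F}$ consists of patterns on a single shape $E$.) The remaining bookkeeping — that $\phi$ is locally constant and rational-valued, that $Y \neq \emptyset$ so the supremum $0$ is actually attained, and that the ground states are precisely the maximal-entropy measures on $Y$ — is routine given the variational principle and the definitions recalled in Section~\ref{prelim}.
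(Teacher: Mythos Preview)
Your proposal is correct and follows essentially the same approach as the paper: define $\phi(x) = -\mathbf{1}[x|_E \in \mathcal{F}]$ for the forbidden list $\mathcal{F}$ of a Hochman--Meyerovitch SFT $Y$ with non-computable entropy, argue that the $\phi$-maximizing invariant measures are exactly those supported on $Y$, and conclude that the ground state entropy equals $h(Y)$. Your treatment of the key step (why $\int \phi \, d\nu = 0$ forces $\nu(Y) = 1$) is in fact more explicit than the paper's own sketch.
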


To see this, take any alphabet $\mathcal{A}$ for which $\mathcal{A}^{\Z^2}$ contains SFTs with noncomputable entropies, as found in \cite{hoch-meyer}. We will provide an example of a locally constant, integer valued potential $\phi$ on the full $G$-shift for which the ground state entropy is not computable. 

Let $X \subset \mathcal{A}^{\Z^2}$ be any SFT such that $h(X)$ is not computable and assume without loss of generality that it is induced by a forbidden list $\mathcal{F} \subset A^F$ for some $F \Subset G$. Define a potential which is locally constant for $F$ by $\phi(x) = -1$ if $x|_F \in \mathcal{F}$ and $\phi(x) = 0$ otherwise.

Suppose that $\mu$ is a ground state for $\phi$. Then $\mu$ must maximize the integral of $\phi$, meaning that it must have $\mu(X) = 1$ (any such measure satisfies $\phi = 0$ $\mu$-a.e., achieving the maximum integral of $0$). The ground state entropy is the maximal entropy among such measures, which is $h(X)$ by the Variational Principle.

Since $\phi$ was locally constant and integer valued, ground state entropy cannot possibly be a computable function in general.

%\begin{proof} Let $X = \mathcal{A}^{\Z^2}$ be the full shift and suppose for contradiction that the ground state entropy map $\phi \mapsto h(\mu)$ is computable. Let $T$ be a Turing machine satisfying computability, in particular for any $\phi \in C(X)$ and any oracle for $\phi$, $\lambda$, for all $n \in \N$, $|T(n, \lambda) - h(\mu)| < 2^{-n}$. 

%We now let $X$ be an SFT and let $\mathcal{F}$ be a collection of forbidden words satisfying this condition. We now define $\phi$ as follows. For all $x\in \cup_{w \in \mathcal{F}} [w]$, let $\phi(x) = -1$, otherwise let $\phi = 0$. Notice here that any ground state $\mu$ for $\phi$ must be an MME supported on $X$, and thus the ground state entropy of $\phi$ must be $h(X)$. 

%Note here that $\phi$ is locally constant and rational valued and so we can take $\lambda(n) = \phi$ to be our oracle for $\phi$. By assumption we know $T(n, \lambda) \rightarrow h(X)$, and therefore $h(X)$ is computable. In particular, we have constructed a computable process by which $h(X_\mathcal{F})$ is computable from $\mathcal{F}$ for any finite $\mathcal{F}$. 

%Since there exists $\Z^2$-SFT's for which $h(X)$ is not computable (as shown in \cite{hoch-meyer}), we have arrived at our contradiction and know that such a Turing machine cannot exist. We can therefore conclude the ground state entropy map is not computable. 
%\end{proof}

%We use the case of $G = \Z^2$ in the proof for simplicity, but clearly the argument holds for any full shift $\mathcal{A}^G$ where $G$-SFTs can attain non-computable entropies. 

\section{Computation Time Bounds for a $\Z^d$ SI SFT}

%The following definition will be useful for the statement of the next theorem. For an SI SFT $X \subset \mathcal{A}^{\Z^d}$ and a locally constant potential $\phi \in C(X)$, we say $n \in \N$ witnesses SFT/SI properties of $X$ and local constancy of $\phi$ if the following conditions hold: 
%\begin{itemize}
%\item $\phi$ is defined on $[-n, n]^d$, 
%\item There exists a forbidden list $\mathcal{F} \subset \mathcal{A}^{[-n, n]^d}$ such that $X = X_{\mathcal{F}}$, 
%\item and $[-n , n]^d$ satisfies strong irreducibility for $X$. 
%\end{itemize}

\begin{theorem}\label{compute time}
For any $d$, there exists a Turing machine $T$ that, upon input of: 
\begin{itemize} 
\item an SI SFT $X \subset \mathcal{A}^{\Z^d}$, 
\item an oracle for the language of $X$, and
\item a rational-valued single-site potential $\phi$,
%\item $n \in \N$ large enough to witness SI/SFT properties for $X$,
\end{itemize} computes $P_X(\phi)$ to precision $2^{-k}$ in $|\mathcal{A}|^{O(2^{kd})}$ time. 
\end{theorem}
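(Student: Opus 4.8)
The plan is to run the argument behind Theorem~\ref{pressure is computable}, specialized to $G=\Z^d$: here the quasitiling constructions used in proving Theorem~\ref{pressure is computable} can be replaced by the \emph{exact} tiling of $\Z^d$ by cubes, which eliminates all $\eta$-slack and leaves only a boundary term, with fully effective constants. Write $Q_n=\{0,1,\dots,n-1\}^d$; for each $n$ the translates $\{v+Q_n : v\in n\Z^d\}$ exactly tile $\Z^d$, and $(Q_n)_n$ is a Følner sequence. Since $\phi$ is already a finite rational object there is no oracle-approximation step; write $\widetilde\phi:\mathcal{A}\to\Q$ for its single-site data, so $\phi(\sigma_g x)=\widetilde\phi(x(g))$, and after replacing $\phi$ by $\phi+\|\widetilde\phi\|_\infty$ (using $P_X(\phi+c)=P_X(\phi)+c$) assume $\widetilde\phi\ge 0$. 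Assume $X$ is presented together with a strong-irreducibility constant; enlarging it, we take it to be a cube $E$ of side $s$, so that $Q_n^\circ:=\{g\in Q_n : g+E\subseteq Q_n\}$ is a subcube of $Q_n$ with $|Q_n^\circ|\ge (n-s)^d$ whenever $n>s$.

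The key step is to prove the two-sided estimate
\[
\frac{1}{|Q_n^\circ|}\log Z_{Q_n^\circ}(\phi)\ \ge\ P_X(\phi)\ \ge\ \frac{1}{n^d}\log Z_{Q_n^\circ}(\phi)\ \ge\ 0 .
\]
The left inequality is the infimum rule applied to $F=Q_n^\circ$; the last holds because $Z_{Q_n^\circ}(\phi)\ge|L_{Q_n^\circ}(X)|\ge 1$ and $\widetilde\phi\ge 0$. For the middle inequality, fix $n$ and consider $Q_{mn}$, which is exactly tiled by $m^d$ disjoint translates $Q_n^{(1)},\dots,Q_n^{(m^d)}$ of $Q_n$. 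Their $E$-interiors satisfy $Q_n^{\circ,(i)}+E\subseteq Q_n^{(i)}$, so for $i\ne j$ we have $(Q_n^{\circ,(i)}+E)\cap Q_n^{\circ,(j)}=\emptyset$; hence strong irreducibility, iterated over the $m^d$ tiles, glues any tuple of patterns from $L_{Q_n^\circ}(X)$ — one placed on each $Q_n^{\circ,(i)}$ — into a legal pattern on $\bigcup_i Q_n^{\circ,(i)}$, which extends to some $W\in L_{Q_{mn}}(X)$. Distinct tuples give distinct $W$, and since $\widetilde\phi\ge 0$ the $\phi$-weight of $W$ over $Q_{mn}$ is at least the sum of the weights of the chosen interior patterns, so summing over all tuples yields $Z_{Q_{mn}}(\phi)\ge Z_{Q_n^\circ}(\phi)^{m^d}$; dividing by $|Q_{mn}|=m^dn^d$ and letting $m\to\infty$ (the pressure limit being independent of the Følner sequence) gives the middle inequality. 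Subtracting across the display and using $\tfrac{1}{|Q_n^\circ|}\log Z_{Q_n^\circ}(\phi)\le\log|\mathcal{A}|+\|\widetilde\phi\|_\infty$ together with $1-|Q_n^\circ|/n^d\le ds/n$ gives
\[
0\ \le\ \frac{1}{|Q_n^\circ|}\log Z_{Q_n^\circ}(\phi)-P_X(\phi)\ \le\ \frac{ds\bigl(\log|\mathcal{A}|+\|\widetilde\phi\|_\infty\bigr)}{n}.
\]
So if $c$ is a rational upper bound for $\log|\mathcal{A}|+\|\widetilde\phi\|_\infty$ and we take $n=\max\{s+1,\lceil 2^{k+1}dsc\rceil\}=\Theta(2^k)$, then $\tfrac{1}{|Q_n^\circ|}\log Z_{Q_n^\circ}(\phi)$ is within $2^{-(k+1)}$ of $P_X(\phi)$, and all quantities needed to pick $n$ are computed directly from the input.

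It then remains to evaluate $Z_{Q_n^\circ}(\phi)$. The machine enumerates all $w\in\mathcal{A}^{Q_n^\circ}$ — there are $|\mathcal{A}|^{|Q_n^\circ|}=|\mathcal{A}|^{O(2^{kd})}$ of them — queries the language oracle on each, and for the legal ones increments a counter $N(r)$ indexed by $r=\sum_{g\in Q_n^\circ}\widetilde\phi(w(g))$, a rational taking only $O(|Q_n^\circ|)$ distinct values. Then $Z_{Q_n^\circ}(\phi)=\sum_r N(r)e^{r}$ is a sum of polynomially many (in $|Q_n^\circ|$) terms whose data have $O(|Q_n^\circ|)$ bits, so $\tfrac{1}{|Q_n^\circ|}\log Z_{Q_n^\circ}(\phi)$ is computable to precision $2^{-(k+1)}$ in $\mathrm{poly}(2^{kd})$ time. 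Combining the two errors yields a $2^{-k}$-approximation of $P_X(\phi)$, and the running time is dominated by the enumeration, for a total of $|\mathcal{A}|^{O(2^{kd})}$ (each oracle query, charged either unit cost or its length $O(|Q_n^\circ|)$, contributes only a $\mathrm{poly}(2^{kd})$ factor).

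I expect the main obstacle to lie not in any isolated step but in assembling the quantitative package: establishing the $O(1/n)$ convergence rate via the exact-cube analogue of Lemma~\ref{lem17} (strong irreducibility is used only for the lower bound, and the sole source of slack — hence of the $1/n$ — is the $E$-collar $Q_n\setminus Q_n^\circ$), and, crucially for a genuine \emph{time} bound, verifying that the implied constant $dsc$ is explicitly computable from the data so that $n=\Theta(2^k)$ can be selected with no unbounded search — this is precisely where the language oracle is needed, since the decidability procedure of Theorem~\ref{decidable language} comes with no such bound. For a locally constant (rather than single-site) $\phi$, one absorbs the dependency window of $\phi$ into $E$ and replaces exact pattern weights by suprema over extensions on a shape of comparable size, leaving every exponent unchanged.
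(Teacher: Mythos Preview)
Your proposal is correct and follows essentially the same approach as the paper: choose a cube of side $\Theta(2^k)$, use strong irreducibility (for the lower bound) and the infimum rule (for the upper bound) to get a two-sided estimate on $P_X(\phi)$ with $O(1/n)$ error in terms of a partition function on the $E$-interior, then compute that partition function by enumerating the $|\mathcal{A}|^{O(2^{kd})}$ patterns and querying the language oracle. The only difference is presentational---the paper routes through the $\eta$-parameter framework of Section~\ref{secapprox} (specializing the general amenable-group argument), whereas you invoke the exact cube tiling of $\Z^d$ directly; the resulting cube size, error analysis, and running time are identical.
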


Alternately phrased, computing to within $\epsilon$ takes $e^{O(\epsilon^{-d})}$ time. 
We note that although we have phrased this result for single-site $\phi$ for simplicity, the same holds for any locally constant potential by using a higher-block recoding of $X$; see the proof of Theorem~\ref{compute time2} for an example of this technique.

\begin{proof} Let $X$, $\phi$, and $n$ be as in the theorem. First note that we may compute $||\phi||_\infty$ by trying all possible symbols in the language, so this has computation time on the order of $ |\mathcal{A}|$. We now let 
$$\eta = \frac{2^{-k-1}}{3 \log | \mathcal{A}| + 3 ||\phi||_\infty + 2^{-k} }$$
and note it can be shown that this $\eta > 0$ satisfies 
$$\frac{1}{1-\eta/2} - (1-\eta) < \frac{2^{-k}}{\log |\mathcal{A}| + ||\phi||_\infty}. $$
We now let $M = \lceil \frac{dn}{\eta} \rceil - n$, and notice that $[-(M+n), (M+n)]^d$ is $([-n, n]^d, \eta)$-invariant. We note that  $E = [-n, n]^d$ and $T_i = [-(M+n), (M+n)]^d$ satisfy the hypotheses of Section~\ref{secapprox}, and so by (\ref{partition-estimate}) we can approximate $P_X(\phi)$ to $2^{-k}$ precision by computing the following: 
\begin{equation}\label{approx}
\frac{1}{(2M+1)^d} \log Z_{[-M,M]^d}(\phi) =
\frac{1}{(2M+1)^d} \log \left( \sum_{v \in L_{[-M, M]^d}(X) } exp \left( \sum_{g \in [-M, M]^d} \phi(\sigma^g(v)) \right) \right).
\end{equation}
(We note that we do not need the $\sup$ from the definition of $Z_{[-M,M]^d}(\phi)$ here since $\phi(\sigma^g(v))$ depends only on $v(g)$, and so is independent of $x \in [v]$.
It is therefore sufficient to compute $\sum_{g \in [-M, M]^d} \phi(\sigma^g(v))$ for all $v \in L_{[-M, M]^d}(X)$. Since there are at most  $\left| \mathcal{A} \right|^{(2M+1)^d}$ of such words, this bounds the number of times we must call $\phi$ by 
$$\left| \mathcal{A} \right|^{(2M+1)^d}(2M+1)^d. $$
We will then need to compute the $exp$ function at most $\left| \mathcal{A} \right|^{(2M+1)^d}$ times. Finally we take log of the resulting sum and we divide by $(2M+1)^d$ and the resulting number is our pressure approximation. 

The above described algorithm above has computation time on the order of 
$$\left| \mathcal{A} \right|^{(2\frac{dn}{\eta} -2n +1)^d}\left(2\frac{dn}{\eta} -2n +1\right)^d. $$

% $$\left| \mathcal{A} \right|^{(2M+1+2n)^d}(2M+1)^d \approx \left| \mathcal{A} \right|^{(2\frac{dn}{\eta}+1+2n)^d}(2\frac{dn}{\eta}+1)^d$$
Notice here by our choice of $\eta$ we have 
$$2 \frac{dn}{\eta}  =2^{k+2}  3dn(\log | \mathcal{A}| +  ||\phi||_\infty) + 4. $$

Thus we have the computation time is approximately 
$$\left| \mathcal{A} \right|^{(2^{k+2}  3dn(\log | \mathcal{A}| +  ||\phi||_\infty) -2n +5)^d}(2^{k+2}  3dn(\log | \mathcal{A}| +  ||\phi||_\infty) -2n + 5)^d. $$
Thus, computing a fixed locally constant potential up to tolerance $2^{-k}$ takes on the order of $|\mathcal{A}|^{O(2^{kd})}$ computation time. 
\end{proof}

Finally, we prove that the special case $X = A^{\mathbb{Z}^2}$ can be done in significantly reduced computation time (i.e. approximation to within $\epsilon$ in singly exponential time in $\epsilon^{-1}$). 

\begin{theorem}\label{compute time2}
There exists a Turing machine $T$ that, upon input of a rational-valued locally constant potential $\phi$ on a full shift $\mathcal{A}^{\mathbb{Z}^2}$, computes $P_X(\phi)$ to precision $2^{-k}$ in $|\mathcal{A}|^{O(2^k)}$ time. 
\end{theorem}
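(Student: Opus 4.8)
The plan is to reduce, by a standard higher-block recoding, to the case of a single-site potential on a nearest-neighbor SFT, and then to replace the naive enumeration of all configurations of an $M\times M$ box (which is doubly exponential in $M$) by a transfer-matrix sweep over the columns of the box (singly exponential in $M$). Since $M$ will be $\Theta(2^k)$, this yields the stated bound.

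First I would recode. Fix a window $W=[-r,r]^2$ on which $\phi$ is locally constant, let $\mathcal{B}=\mathcal{A}^{W}$, and let $\pi:\mathcal{A}^{\mathbb{Z}^2}\to\mathcal{B}^{\mathbb{Z}^2}$, $x\mapsto(x|_{W+g})_{g\in\mathbb{Z}^2}$. This $\pi$ is a topological conjugacy onto its image $\tilde X$, which is precisely the nearest-neighbor SFT of all $\tilde x$ whose adjacent coordinates agree on the overlaps of their windows. Put $\tilde\phi(\tilde x):=\phi(\pi^{-1}(\tilde x))$; since the block at the origin already contains all of $W$, $\tilde\phi$ is single-site. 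As pressure and strong irreducibility are preserved under conjugacy and $\mathcal{A}^{\mathbb{Z}^2}$ is SI, $\tilde X$ is an SI SFT with $P_{\tilde X}(\tilde\phi)=P_X(\phi)$, and its language is trivially decidable (check all adjacent pairs). Hence the approximation argument of Section~\ref{secapprox} applies: taking $\eta=\Theta(2^{-k})$ and $M=\lceil 2/\eta\rceil-1=O(2^k)$ exactly as in the proof of Theorem~\ref{compute time} (with $d=2$ and interaction range $1$), inequality~\eqref{partition-estimate} gives $\left|P_X(\phi)-\tfrac{1}{(2M+1)^2}\log Z_{[-M,M]^2}(\tilde\phi)\right|<2^{-k}$, and since $\tilde\phi$ is single-site the supremum defining $Z_{[-M,M]^2}(\tilde\phi)$ is vacuous, so $Z_{[-M,M]^2}(\tilde\phi)=\sum_{v\in L_{[-M,M]^2}(\tilde X)}\exp\!\big(\sum_{g\in[-M,M]^2}\tilde\phi(v(g))\big)$. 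The bottleneck is that this sum has up to $|\mathcal{B}|^{(2M+1)^2}$ terms, which is doubly exponential in $M$; I want to evaluate it in $|\mathcal{B}|^{O(M)}$ time.

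To do so I would run a dynamic program over the $2M+1$ columns of the box. Call $c\in\mathcal{B}^{[-M,M]}$ a legal column if it satisfies the vertical nearest-neighbor constraints of $\tilde X$ (at most $|\mathcal{B}|^{2M+1}$ of these), set $W(c)=\exp(\sum_{i=-M}^{M}\tilde\phi(c(i)))$, and call two legal columns horizontally compatible if each corresponding pair of symbols is a legal horizontal pair. Then $Z_{[-M,M]^2}(\tilde\phi)=\sum_{c_{-M},\dots,c_M}\prod_{j}W(c_j)\prod_{j}\mathbf{1}[c_j,c_{j+1}\text{ compatible}]$, the outer sum over legal columns, which is computed by the recursion $u_{-M}(c)=W(c)$, $u_{j+1}(c')=W(c')\sum_{c}\mathbf{1}[c,c'\text{ compatible}]\,u_j(c)$, with answer $\sum_{c}u_M(c)$. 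Each of the $2M$ steps performs at most $|\mathcal{B}|^{2(2M+1)}$ multiply-adds (horizontal compatibility and the weights $W(c)$ being precomputed in $|\mathcal{B}|^{O(1)}\cdot\mathrm{poly}(M)$ time), so the whole computation uses $|\mathcal{B}|^{O(M)}=|\mathcal{A}|^{O(2^k)}$ arithmetic operations; taking $\tfrac{1}{(2M+1)^2}\log$ of the result to precision $2^{-k}$ is comparatively cheap.

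The remaining points are bookkeeping. The numbers in play (the $W(c)$ and the partial sums $u_j(c)$) lie between $e^{\pm(2M+1)^2||\phi||_\infty}$ in magnitude and are sums of at most $|\mathcal{B}|^{(2M+1)^2}$ such quantities, so representing them, with enough guard bits to control accumulated roundoff and to make the final $\tfrac{1}{(2M+1)^2}\log$ accurate to $2^{-k}$, costs only $\mathrm{poly}(M,k,\log|\mathcal{A}|)$ bits per number; big-integer arithmetic on $\mathrm{poly}(M)$-bit numbers multiplies the operation count by only a polynomial factor in $M$, which is absorbed into $|\mathcal{A}|^{O(2^k)}$ (one may assume $|\mathcal{A}|\ge 2$, the case $|\mathcal{A}|=1$ being trivial), and likewise the $\exp$/$\log$ evaluations are needed only to precision polynomial in $M$ and $k$. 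I do not expect a genuine obstacle here: the one idea that makes the theorem go is recognizing that the two-dimensional structure lets one trade the doubly-exponential box enumeration for a singly-exponential column transfer operator; once the recoding is set up so that $\tilde\phi$ is single-site and $\tilde X$ is a nearest-neighbor SI SFT, the rest is routine transfer-matrix computation combined with the already-established estimate~\eqref{partition-estimate}.
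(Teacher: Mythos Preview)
Your proposal is correct and follows essentially the same approach as the paper: a higher-block recoding to a single-site potential on a nearest-neighbor SFT, followed by a column transfer operator to compute the partition function on $[-M,M]^2$ in time $|\mathcal{B}|^{O(M)}$. The only cosmetic differences are that the paper phrases the column sweep as summing the entries of the matrix power $B(M)^{2M+1}$ (which naturally produces the partition function on a box with one extra column, requiring a small correction factor), whereas your dynamic-programming recursion $u_{j+1}(c')=W(c')\sum_c \mathbf{1}[c,c'\ \text{compatible}]\,u_j(c)$ lands directly on $Z_{[-M,M]^2}$ and avoids that correction; also, the paper does not discuss bit-complexity, which you handle. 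One small imprecision: after recoding, the SI gap for $\tilde X$ is governed by $[-2r,2r]^2$ rather than $[-1,1]^2$, so the ``interaction range $1$'' you cite for choosing $M$ is not quite right, but since $r$ is a constant determined by $\phi$ this does not affect the $M=O(2^k)$ conclusion.
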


\begin{proof}
First, we apply a so-called higher block recoding so that we can assume without loss of generality that $\phi$ is single-site. More specifically, if $\phi(x)$ depends only on $x([-n,n]^2)$, then we define a new $\mathbb{Z}^2$ nearest-neighbor SFT $X_n$ as follows: the alphabet $A_n := A^{[-n,n]^2}$, and for $a,b \in A_n$, $ab$ is a legal horizontal transition iff the rightmost $2n$ columns of $a$ are equal to the leftmost $2n$ columns of $b$, and $\begin{smallmatrix} a\\ b \end{smallmatrix}$ is a legal vertical transition iff the bottom $2n$ rows of $a$ are equal to the top $2n$ rows of $b$. It's easily checked that $X_n$ is always SI, and that $P_{A^{\mathbb{Z}^2}}(\phi) = P_{X_n}(\widetilde{\phi})$, where $\widetilde{\phi}$ is the single-site potential defined by $\widetilde{\phi}(a)$ equal to the common value of $\phi$ on all $x \in [a]$.

It now suffices to show that for $X_n$, (\ref{approx}) from the proof of Theorem~\ref{compute time} can be computed in time $|\mathcal{A}|^{O(2^k)}$. We do this via a `transfer matrix' method used in several works, the presentation here is closest to that used in \cite{marcus-pavlov}. 

Define, for any $M$, a matrix $B(M)$ with entries indexed by words in $L_{X_n}({\{0\} \times [-M,M]})$ ((2M+1)-high columns). %, which are in clear bijective correspondence to words in $A^{[-n,n] \times [-n, n+m-1]}$. 
For $a = \begin{smallmatrix} a_M\\ \cdots\\ a_{-M} \end{smallmatrix}, b = \begin{smallmatrix} b_M\\ \cdots\\ b_{-M} \end{smallmatrix} \in A_n^{\{0\} \times [-M,M]}$, the entry $B(M)_{ab}$ is equal to $0$ if $ab = \begin{smallmatrix} a_M b_M\\ \cdots\\ a_{-M} b_{-M}\end{smallmatrix} \notin L(X_n)$, and is equal to $\prod_{i=-M}^M \textrm{exp } \widetilde{\phi}(a_i)$ otherwise.

Then it's easily checked by induction that the sum of all entries of $B(M)^{2M+1}$ is equal to the sum over all $v \in L_{[-M,M+1] \times [-M,M]}(X_n)$ of 
\[
\prod_{-M \leq i,j \leq M} \textrm{exp } \widetilde{\phi}(v(i,j)) = 
\textrm{exp } \left(\sum_{g \in [-M,M]^2} 
\widetilde{\phi}(\sigma^g v) \right).
\]
This very nearly yields (\ref{approx}) (upon taking a logarithm and dividing by $(2M+1)^2$), except for the issue that we are summing over $v$ with shape $[-M,M+1] \times [-M,M]$ rather than $[-M,M]^2$. This is easily dealt with though; patterns in $L_{[-M,M]^2}(X_n)$ are in clear bijective correspondence to patterns in $\mathcal{A}^{[-M-n,M+n]^2}$, and so every pattern in $L_{[-M,M]^2}(X_n)$ has exactly $|A|^{2M+2n+1}$ extensions to the right to create legal patterns in
$L_{[-M,M+1] \times [-M,M]}(X_n)$. Therefore, (\ref{approx}) is obtained by dividing the sum of the entries of $B(M)^{2M+1}$ by $|A|^{2M+2n+1}$, taking a logarithm, and dividing by $(2M+1)^2$.

It remains only to recall that $M = O(2^k)$ and note that the size of $B(M)$ is $|A_n| = |A|^{(2n+1)(2M+2n+1)} = |A|^{O(2^k)}$, and that matrix multiplication is polynomial time, so computing the $(2M+1)$th power takes $|A|^{O(2^k)}$ time as well.

\end{proof}

We note that a similar technique can be used for $d > 2$, where the matrix is indexed by $(d-1)$-dimensional configurations and so the computation time is $|A|^{O(2^{(d-1)k})}$, meaning that the algorithm computes to within $\epsilon$ in $e^{O(\epsilon^{-(d-1)})}$ time. 
The main obstacle to applying this to SI SFTs other than the full shift is that we cannot know whether the $v$ summed over when one sums entries of $B(M)^{2M+1}$ are globally admissible, only that each pair of columns form legal adjacencies. We imagine that a similar technique could be used if all locally admissible square patterns in $X$ are globally admissible, but leave details to the interested reader.

%\begin{remark} In the case of the full shift $X = \mathcal{A}^{\Z^d}$, computation time may be decreased to $ |\mathcal{A}|^{2^{O(k(d-1))}}$. 
%\end{remark} 

%\subsection*{Acknowledgements} This work was conducted during the author's PhD studies under the supervision of Prof. Ronnie Pavlov. The author would like to thank Prof. Pavlov for all of his assistance throughout the research and writing process. 

%\subsection*{Disclosures} The author has no conflicts of interest to declare that are relevant to the content of this article. No funding was received to assist with the preparation of this manuscript.  

\bibliographystyle{plain}
\bibliography{mybib}

\end{document}